\allowdisplaybreaks \numberwithin{equation}{section}
\numberwithin{equation}{section}
\newtheorem{theorem}{Theorem}[section]
\newtheorem{lemma}[theorem]{Lemma}
\theoremstyle{definition}
\theoremstyle{remark}
\newcommand{\ep}{\varepsilon}
\newcommand{\Om}{\Omega}
\begin{document}

	\title
	[Regularization for point vortices on $\mathbb S^2$]{$C^1$ type regularization for point vortices on $\mathbb S^2$} 
	
	\author{Takashi Sakajo, Changjun Zou}
	
	\address{Department of Mathematics, Kyoto University, Kyoto, 606--8502, Japan}
	\email{sakajo@math.kyoto-u.ac.jp}
	\address{Department of Mathematics, Sichuan University, Chengdu, Sichuan, 610064, P.R. China}
	\email{zouchangjun@amss.ac.cn}
	
	\thanks{}

	\begin{abstract}
		We construct a series of classic vorticity solutions for incompressible Euler equation on $\mathbb S^2$, which constitute the $C^1$ type regularization for a general traveling point vortex system. The construction is accomplished by applying tangent mapping on $\mathbb S^2$ and Lyapunov--Schmidt reduction argument. Using the fixed-point theorem and a finite dimensional equation on vortex dynamics, we prove that the vortices are located near a nondegenerate critical point of Kirchhoff--Routh function. Moreover, in the tangent space at each vortex center, the scaled stream function is verified as a perturbation of the ground state for generalized plasma problem. Some other qualitative and quantitative estimates for the regularization series are also obtained in this paper. 
	\end{abstract}
	
	\maketitle{\small{\bf Keywords:}  Incompressible Euler equation on $\mathbb S^2$; $C^1$ type regularization for point vortices; Generalized plasma problem; Lyapunov--Schmidt reduction argument \\ 
		
		{\bf 2020 MSC} Primary: 76B47; Secondary: 76B03.}

	\section{Introduction}
	
	The motion of particles in an ideal fluid on the rotating unit sphere 
	$$\mathbb S^2:=\{ (x_1,x_2,x_3)\in\mathbb R^3 \mid x_1^2+x_2^2+x_3^3=1\},$$ 
	at uniform angular speed $\boldsymbol \gamma$ can be described by the following Euler equation of vorticity formulation (see \cite{Gre,Tay,Yuri}):
	\begin{align}\label{1-1}
		\begin{cases}
			\partial_t\omega+\boldsymbol{v}\cdot \nabla_{\mathbb S^2} (\omega-2\boldsymbol \gamma\cos\theta)=0, \quad \mathrm{on} \ \mathbb S^2,\\
			\ \boldsymbol{v}=\nabla^\perp_{\mathbb S^2}(-\Delta)^{-1}_{\mathbb S^2}\omega,\\
			\omega\big|_{t=0}=\omega_0,
		\end{cases}
	\end{align}
	where $\omega$ is the vorticity function, $\boldsymbol v$ is the velocity field, the term 
	$-2\boldsymbol\gamma\boldsymbol v\cdot \nabla_{\mathbb S^2}\cos\theta$ is the Coriolis force coming from the rotation of the sphere, and the second formula recovering $\boldsymbol v$ by $\omega$ is the Biot--Savart law on $\mathbb S^2$. In reality, \eqref{1-1} corresponds to the mathematical model for hurricane or circulation current on Earth.
	
	Generally speaking, the manifold $\mathbb S^2$ can be endowed with a smooth manifold structure described by the following two charts:
	\begin{align*}
		\mathcal{C}_1: (0,\pi)\times(0,2\pi)&\to \mathbb R^3\\
		(\theta,\varphi)&\mapsto (\sin\theta\cos\varphi, \sin\theta\sin\varphi,\cos\theta)\\
		\mathcal{C}_2: (0,\pi)\times(0,2\pi)&\to \mathbb R^3\\
		(\bar\theta,\bar\varphi)&\mapsto (-\sin\bar\theta\cos\bar\varphi, -\sin\bar\theta,-\sin\bar\theta\sin\bar\varphi).
	\end{align*}
	By assuming that the support of $\omega$ does not touch the south and north poles $ \{N, S\}$,  we can mainly work on the first chart $\mathcal{C}_1$, where the two variables $\theta$ and $\varphi$ are called the colatitude 
	and the longitude respectively. In this spherical coordinate, the Riemannian metric on $\mathbb S^2$ is given by
	\begin{equation*}
		\mathbf g_{\mathbb S^2}(\theta,\varphi)=d\theta^2+\sin^2\theta d\varphi^2.
	\end{equation*}
	For any point $\boldsymbol z = (\theta, \varphi) \in \mathbb S^2 \setminus \{N, S\}$ 
	on the sphere except for the two poles, an orthogonal
	basis of the tangent space $T_{\boldsymbol z}\mathbb S^2$ is given by
	\begin{equation*}
		\boldsymbol e_\theta=\partial_\theta \quad \mathrm{and} \quad \boldsymbol e_\varphi=\frac{\partial_\varphi}{\sin\theta},
	\end{equation*}
	where the classical identification between tangent vectors and directional differentiation is used. Denote $f(\boldsymbol z):\mathbb S^2\to \mathbb R$ as a function on $\mathbb S^2$. The integration for $f$ on the sphere can be written as 
	\begin{equation*}
		\int_{\mathbb S^2}f(\boldsymbol z)d\boldsymbol \sigma(\boldsymbol z)=\int_0^{2\pi}\int_0^{\pi}f(\theta,\varphi)\sin\theta d\theta d\varphi,
	\end{equation*}
	where
	\begin{equation*}
		d\boldsymbol\sigma=\sin\theta d\theta d\varphi
	\end{equation*}
	is the Riemannian volume.

	We shall get a step further to explicit the derivative notations on $\mathbb S^2$ appeared in \eqref{1-1}. Under the 
	spherical coordinate, the gradient of a scalar function $f$ on $\mathbb S^2$ is defined by
	\begin{equation*}
		\nabla_{\mathbb S^2} f(\theta,\varphi)=\partial_\theta f(\theta,\varphi)\boldsymbol e_\theta+\frac{\partial_\varphi f(\theta,\varphi)}{\sin\theta}\boldsymbol e_\varphi,
	\end{equation*}
	whose normal is 
	\begin{equation*}
		\nabla^{\perp}_{\mathbb{S}^2} f(\theta,\varphi)=J\nabla_{\mathbb{S}^2} f(\theta,\varphi),\quad\quad \mathrm{Mat}(J)=\left(
		\begin{array}{ccc}
			0 & 1 \\
			-1 & 0 \\
		\end{array}
		\right).
	\end{equation*}
	For a vector function $\boldsymbol f(\boldsymbol z) = (f_\theta(\theta, \varphi), f_\varphi(\theta, \varphi))$, the divergence is defined by
	\begin{equation*}
		\nabla_{\mathbb S^2} \cdot\boldsymbol f(\theta,\varphi)=\frac{1}{\sin\theta}\partial_\theta (\sin\theta f_\theta(\theta,\varphi)) +\frac{1}{\sin\theta}\partial_\varphi f_\varphi(\theta,\varphi).
	\end{equation*}
	Using the gradient and the divergence on $\mathbb S^2$, the Laplace--Beltrami operator $\Delta_{\mathbb S^2}$ can be expressed as 
	\begin{equation*}
		\Delta_{\mathbb S^2} f(\theta,\varphi)=\frac{1}{\sin\theta}\partial_\theta (\sin\theta\partial_\theta f(\theta,\varphi)) +\frac{1}{\sin^2\theta}\partial^2_\varphi f(\theta,\varphi),
	\end{equation*}
	where the north and south poles are two singular points.  For the inverse of Laplacian $(-\Delta_{\mathbb S^2})^{-1}$ in the Biot--Savart law of equation \eqref{1-1}, we have following Green representation
	\begin{equation*}
		(-\Delta)^{-1}_{\mathbb S^2}f=\int_{\mathbb S^2}G(\boldsymbol z,\boldsymbol z')d\boldsymbol\sigma(\boldsymbol z'),
	\end{equation*}
	where $G(\boldsymbol z,\boldsymbol z')$ is the Green function given by
	\begin{equation*}
		G(\theta,\varphi,\theta',\varphi')=-\frac{1}{4\pi}\ln\big(1-\cos\theta\cos\theta'-\sin\theta\sin\theta'\cos(\varphi-\varphi')\big)+\frac{\ln2}{4\pi}.
	\end{equation*}
	It follows from
	\begin{equation*}
		1-\cos\theta\cos\theta'-\sin\theta\sin\theta'\cos(\varphi-\varphi')=2\left[\sin^2\left(\frac{\theta-\theta'}{2}\right)+\sin\theta\sin\theta'\sin^2\left(\frac{\varphi-\varphi'}{2}\right)\right]
	\end{equation*}
	that the singular part in $G(\boldsymbol z,\boldsymbol z')$ is similar to the fundamental solution $-\frac{1}{2\pi}\ln\frac{1}{|\boldsymbol x|}$ for $-\Delta$ in $\mathbb R^2$, which is defined as
	\begin{equation*}
		\Gamma(\theta,\varphi,\theta',\varphi')=-\frac{1}{4\pi}\ln\big[(\theta-\theta')^2+(\varphi-\varphi')^2 \sin^2\theta\big].
	\end{equation*}
	Then we can split $G(\boldsymbol z,\boldsymbol z')$ into two parts
	\begin{equation}\label{1-2}
		\begin{split}
			G(\theta,\phi,\theta',\phi')&=\Gamma(\theta,\varphi,\theta',\varphi')+\left[G-\Gamma(\theta,\varphi,\theta',\varphi')\right]\\
			&=\Gamma(\theta,\varphi,\theta',\varphi')+H(\theta,\varphi,\theta',\varphi'),
		\end{split}
	\end{equation}
	where $H(\theta,\varphi,\theta',\varphi')\in C^1(B_\delta(\boldsymbol z)\times B_\delta(\boldsymbol z))$ for $\boldsymbol z\in \mathbb S^2\setminus \{N, S\}$ is the remaining regular part, and $B_\delta(\boldsymbol z)$ is a spherical cap around $\boldsymbol z$ with radius $\delta$.
	
	\smallskip
	
	For the global well-posedness of \eqref{1-1} with $\omega_0\in L^1\cap L^\infty$, we refer to \cite{Tay} following the approach of Yudovichi \cite{Yud}. Besides the original Cauchy problem, since the fluid patterns, in reality, can often be regarded as a perturbation near the relative equilibrium states, these equilibria or global solutions play an important role in mathematicians' and physicists' understanding of the behavior of the fluid. For example, stratospheric planetary flows for different planets in our solar system can be modeled by the bifurcation of zonal solutions \cite{CG}, and periodic vortex shedding along the equator on Jupiter is very similar to the von K\'arm\'an vortex street on $\mathbb S^2$ \cite{SZ}.
	
	For the planer incompressible Euler equation,
	\begin{align*}
		\begin{cases}
			\partial_t\omega+\boldsymbol{v}\cdot \nabla \omega=0, \quad \mathrm{in} \ \mathbb R^2,\\
			\ \boldsymbol{v}=\nabla^\perp(-\Delta)^{-1}\omega,\\
			\omega\big|_{t=0}=\omega_0,
		\end{cases}
	\end{align*}
	various methods have established different relative equilibrium states. The first non-trivial example is the famous Kirchhoff ellipse \cite{Kir}, where the vorticity
	is the characteristic function of an ellipse with semi-axes $a,b$ and a uniform angular velocity $ab/(a+b)^2$. It is obvious that the Kirchhoff ellipse is $2$-fold symmetric, and this kind of uniform rotating solutions are now called V-states \cite{Deem} of $N$-fold symmetry. Burbea \cite{Burb} gives the first attempt to construct V-states bifurcated from a unit disk for $N\ge3$, which is made rigorous by Hmidi et al. \cite{HMV}. Their approach is the contour dynamics by studying the functional equation for the vorticity boundary. Since then, different kinds of V-states have been constructed through a delicate analysis of the spectrum for the linearized problem. For instance, Hoz et al.\cite{de2} obtain the doubly-connected V-states near an annulus; Castro et al. consider the case of bifurcation from Kirchhoff ellipse in \cite{Cas4}, and extend the theory to smooth solutions in \cite{Cas3}; Hassainia et al.~\cite{Has0} obtain a global bifurcation curve for rotating vortex patches. Besides the V-state solutions, another typical class of equilibrium states of highly concentrated vortices are also of special interest. This type of solution is usually constructed by regularization of the point vortex system, and the first rigorous construction for concentrated vortex patches was due to Turkington \cite{T1} through a dual variational principle of Arnol$'$d \cite{Ar2}, which is developed in \cite{CWZ,Go} for more general vorticity profile function and gSQG equations. Then in \cite{Ao,CPY,SV}, the authors transform the regularization process into a semi-linear elliptic problem and apply the Lyapunov--Schmidt reduction or Mountain path method to deal with the construction. Hmidi and Mateu~\cite{HM} also apply the dynamics to construct co-rotating and counter-rotating patches near a vortex pair. 
	
    Compared with the incompressible Euler equation on $\mathbb R^2$, there is relatively less work on the global solutions for  \eqref{1-1}, where longitude independent $\omega(\theta,\varphi)=\psi(\theta)$ constitutes the most common trivial steady states known as zonal solutions. Constantin and Germain~\cite{CG} consider local and global bifurcation of non-zonal solutions to \eqref{1-1} from Rossby--Haurwitz waves. They prove the stability in $H^2(\mathbb S^2)$ of degree $2$ case as well as the instability in $H^2(\mathbb S^2)$ of 
    more general non-zonal Rossby--Haurwitz type solutions. They also show that any steady solution $\omega$ to \eqref{1-1} of the form
    $$\omega=\mathcal{F}\left((-\Delta)^{-1}_{\mathbb S^2}\omega\right)-2\boldsymbol \gamma\cos\theta,$$
    with $\mathcal{F}:\mathbb R\to \mathbb R$ a smooth function satisfying $\mathcal{F}'>-6$ must be zonal (modulo rotation), and stable in $H^2(\mathbb S^2)$ provided an additional constraint $\mathcal{F}' < 0$. (The validity for functional relationship $\mathcal{F}$ as a sufficient condition for $\omega$ being a steady solution can be verified as a analogy of \eqref{1-3} later, and the constant $-6$ corresponds to the second eigenvalues of the Laplace--Beltrami operator.) Then Garc\'ia et al.~\cite{GHR} developed the idea in \cite{HMV}, and constructed $k$-fold symmetric vortex cap solutions by using contour dynamics equations and bifurcation tool. On the other hand, the point vortex system on $\mathbb S^2$ is an interesting topic for the forecast of tornadoes and hurricanes, whose dynamic is studied in \cite{New0, New, Wang}. Very recently, the authors of this paper \cite{SZ} give the first attempt at regularization for point vortices on $\mathbb S^2$, and construct two types of von K\'arm\'an vortex patch street on $\mathbb S^2$.
    
	It is noteworthy that in the previous work \cite{GHR,SZ}, the vortices are given by characteristic function for several subdomains of $\mathbb S^2$, and hence the solution $\omega$ is discontinuous on the boundary of vortex sets. According to the regularity theory of elliptic operator, the stream function $\psi=(-\Delta)^{-1}_{\mathbb S^2}\omega$ is in $C^{1,1}$ and velocity field $\boldsymbol v$ is only Lipschitz smooth. For $\omega$ in those situations, we see that it only gives a weak solution to \eqref{1-1}. In this paper, we regularize the point vortices on $\mathbb S^2$ in a smoother type, and obtain a family of $C^1$ relative equilibrium states $\omega$, which constitute the classic global solutions to \eqref{1-1}. Moreover, we prove that the stream function $\psi$ after scaling tend to the ground state of the generalized plasma problem, and the boundary of each vortex area is a $C^2$ convex curve due to the nice regularity.
	
	To begin with, let us consider the traveling wave solutions to \eqref{1-1} with sphere rotation speed $\boldsymbol \gamma=0$, 
	which is obtained by a translation acting on initial data $\omega_0$, namely, 
	\begin{equation*}
		\omega(t,\theta,\varphi)=\omega_0(\theta, \varphi+Wt)
	\end{equation*}
	with $W$ the traveling speed along the latitude. By substituting it into \eqref{1-1} and letting $\psi=(-\Delta)^{-1}_{\mathbb S^2}\omega_0$ be the stream function, we find that the first equation of \eqref{1-1} becomes
	\begin{equation*}
		\nabla_{\mathbb{S}^2}^\perp(\psi+W\cos\theta)\cdot \nabla_{\mathbb S^2} \omega_0=0,
	\end{equation*}
	which means that $\psi+W\cos\theta$ and $\omega_0$ are functional dependent. Hence if we let $F(\tau): \mathbb R\to \mathbb R$ be a suitable vorticity profile function, by imposing the functional relationship
	\begin{equation}\label{1-3}
		(-\Delta_{\mathbb S^2})\psi=\omega_0=F(\psi+W\cos\theta),
	\end{equation}
	then $\omega_0$ gives a traveling wave solution to \eqref{1-1} on the stationary sphere, that is to say, $\omega_0$ satisfies
		\begin{align*}
		\begin{cases}
			(\boldsymbol{v}_0+(0,W))\cdot \nabla_{\mathbb S^2} \omega_0=0,\quad \mathrm{on} \ \mathbb S^2,\\
			\ \boldsymbol{v}_0=\nabla^\perp_{\mathbb S^2}(-\Delta)^{-1}_{\mathbb S^2}\omega_0.\\
		\end{cases}
	\end{align*}
	In \cite{SZ}, the profile function is of the type $F(\tau)=\boldsymbol 1_{\{\tau>0\}}$, while in this paper we will let $F(\tau)=\tau_+^\gamma$ with $\gamma\ge 1$ so that $F\in C^1(\mathbb R)$ ($F$ become smoother as $\gamma$ increase). Thus we can apply the regularity theory of Laplacian and bootstrap to obtain $\omega\in C^1$ and $\psi\in C^3$.
	
	Since we are to regularize a traveling point vortex system on $\mathbb S^2$, the limit function for vorticity $\omega_0$ should be 
	\begin{equation}\label{1-4}
		\omega^*(\boldsymbol z)=\sum\limits_{m=1}^{j}\kappa_m^+\boldsymbol \delta_{\boldsymbol z_m^+}-\sum\limits_{n=1}^{k}\kappa_n^-\boldsymbol \delta_{\boldsymbol z_n^-}.
	\end{equation}
	On the other hand, given the divergence theorem, we can deduce the Gauss constraint for \eqref{1-1} as
	\begin{equation*}
		\int_{\mathbb S^2}\omega(t,\boldsymbol z)d\boldsymbol \sigma(\boldsymbol z)=0,\quad \forall \, t\in[0,+\infty).
	\end{equation*}
	Thus it is natural to assume 
	$$\sum\limits_{m=1}^{j}\kappa_m^+=\sum\limits_{n=1}^{k}\kappa_n^-.$$
	Actually, the point vortices on $\mathbb S^2$ are transported by the velocity field generated by other vortices, whose dynamics are dominated by a Hamiltonian 
	system induced by the corresponding Kirchhoff--Routh function. For this topic, readers can refer to \cite{Lin} for the case on $\mathbb R^2$, and \cite{Drit, Sakajo-Torus, VS-flat-torus} for the cases on $\mathbb S^2$, the curved torus $\mathbb{T}_{R,r}$ and the flat torus $\mathbb{T}^2$ separately. 
	Since the point vortex system in \eqref{1-4} is a relative equilibrium state, the coordinates $(\boldsymbol z_1^+,\cdots,\boldsymbol z_j^+, \boldsymbol z_1^-,\cdots, \boldsymbol z_k^-)$ with $\boldsymbol z_m^+=(\phi_m^+,\theta_m^+)$, $\boldsymbol z_n^-=(\phi_n^-,\theta_n^-)$ ($\theta_m^+,\theta_n^-\neq 0,\pi$) should be on a stagnation point of velocity field, namely, a critical point of the Kirchhoff--Routh function
	\begin{equation}\label{1-5}
		\begin{split}
			\mathcal K_{k+j}&(\boldsymbol z_1^+,\cdots,\boldsymbol z_j^+,\boldsymbol z_1^-,\cdots,\boldsymbol z_k^-)\\
			&=\frac{1}{2}\sum_{m,l=1,m\neq l}^j\kappa_m^+\kappa_l^+G(\boldsymbol z_m^+,\boldsymbol z_l^+)+\frac{1}{2}\sum_{l,n=1,l\neq n}^k\kappa_l^-\kappa_n^-G(\boldsymbol z_l^-,\boldsymbol z_n^-)\\
			&\quad+\frac{1}{2}\sum_{m=1}^k(\kappa_m^+)^2H(\boldsymbol z_m^+,\boldsymbol z_m^+)+\frac{1}{2}\sum_{n=1}^j(\kappa_n^-)^2H(\boldsymbol z_n^-,\boldsymbol z_n^-)\\
			&\quad-\sum_{m=1}^j\sum_{n=1}^k \kappa_m^+\kappa_n^- G(\boldsymbol z_m^+,\boldsymbol z_n^-)-W\sum_{m=1}^j\kappa_m^+\cos \theta_m^++W\sum_{n=1}^k\kappa_n^-\cos \theta_n^-.
		\end{split}
	\end{equation}
	Now we construct a series of vortex solutions $\omega_\ep$ to regularize $\omega^*(\boldsymbol z)$. In view of the functional relationship $F$ derived in \eqref{1-3}, $\psi_\varepsilon=(-\Delta)^{-1}_{\mathbb S^2}\omega_\ep$ can be chosen to satisfy the equation
	\begin{equation}\label{1-6}
		(-\Delta_{\mathbb S^2})\psi_\varepsilon=\frac{1}{\varepsilon^2}\sum_{m=1}^j\boldsymbol1_{B_\delta(\boldsymbol z_m^+)}(\psi_\varepsilon+W\cos\theta-\mu_{m,\varepsilon}^+)_+^\gamma-\frac{1}{\varepsilon^2}\sum_{n=1}^k\boldsymbol1_{B_\delta(\boldsymbol z_n^-)}(-\psi_\varepsilon-W\cos\theta>\mu_{n,\varepsilon}^-)_+^\gamma,
	\end{equation}
	where the power index $\gamma\ge1$, the radius $\delta>0$ is a small constant, $\mu_{m,\varepsilon}^+,\mu_{n,\varepsilon}^-$ are flux constants to be prescribed, and
	$$\{\psi_\varepsilon+W\cos\theta>\mu_{m,\varepsilon}^+\}=\Omega_{m,\varepsilon}^+, \quad\quad \{-\psi_\varepsilon-W\cos\theta>\mu_{n,\varepsilon}^-\}=\Omega_{n,\varepsilon}^-$$ 
	are the positive and negative vortex sets. We have the following theorem on the regularization of \eqref{1-4} as solutions to \eqref{1-5} with $\gamma>1$, whose limit stream function is the solution to generalized plasma problem $-\Delta u=u_+^\gamma$.
	
	\begin{theorem}\label{thm1}
		Suppose that $\gamma>1$. Then for any nondegenerate critical point $(\boldsymbol z_1^+,\cdots,\boldsymbol z_j^+, \boldsymbol z_1^-,\cdots, \boldsymbol z_k^-)$ of Kirchhoff--Routh function $\mathcal K_{k+j}$ defined by \eqref{1-5}, there exists a small $\varepsilon_0>0$, such that for each $\varepsilon\in (0,\varepsilon_0]$, \eqref{1-6} has a solution $\psi_\varepsilon$. Let $\omega_\varepsilon=(-\Delta_{\mathbb S^2})\psi_\varepsilon$ be the vorticity function. We have the following asymptotic estimates:
		\begin{itemize}
			\item[(i)] One has
			$$\omega_\varepsilon\rightharpoonup \sum\limits_{m=1}^{j}\kappa_m^+\boldsymbol \delta_{\boldsymbol z_m^+}-\sum\limits_{n=1}^{k}\kappa_n^-\boldsymbol \delta_{\boldsymbol z_n^-} \quad \quad \mathrm{as} \ \varepsilon\to 0^+,$$
			where the convergence is in the sense of measures, and $\kappa_l^\pm$ is the circulation for each point vortex.
			\item [(ii)] Let $w_\gamma(\boldsymbol y)$ be the unique radial solution of
			\begin{equation*}
				-\Delta w_\gamma(\boldsymbol y)=w_\gamma^\gamma, \quad w_\gamma\in H_0^1( B_1(\boldsymbol 0)), \quad w_\gamma(\boldsymbol y)>0 \ \mathrm{in} \ B_1(\boldsymbol 0),
			\end{equation*}
			and $s_{l,\varepsilon}^\pm$ satisfy $\left(\frac{\ep}{s_{l,\ep}^\pm}\right)^{\frac{2}{\gamma-1}}w'_\gamma(1)=\frac{\kappa_l^\pm}{2\pi}\frac{|\ln\varepsilon|}{|\ln s_{l,\varepsilon}^\pm|}$. Then as $\ep\to 0^+$, one has
			$$\left(\frac{s_{l,\varepsilon}^\pm}{\ep}\right)^{\frac{2}{\gamma-1}}\psi_\varepsilon(\mathcal A^\pm_{l,\ep}\boldsymbol y+\boldsymbol z^\pm_{l,\ep})\to w_\gamma(\boldsymbol y) \quad \mathrm{in} \ C^3(B_2(\boldsymbol 0)), $$
		    $$\left(\frac{s_{l,\varepsilon}^\pm}{\ep}\right)^{\frac{2\gamma}{\gamma-1}}\omega_\varepsilon(\mathcal A^\pm_{l,\ep}\boldsymbol y+\boldsymbol z^\pm_{l,\ep})\to \frac{(w_\gamma(\boldsymbol y))^\gamma_+}{\ep^2} \quad\mathrm{in} \ C^1(B_2(\boldsymbol 0)),$$
			where the matrix of tangent mapping $\mathcal A^\pm_{l,\ep}$ is 
			\begin{equation*}
				\mathrm{Mat}(\mathcal A^\pm_{l,\ep})=\left(
				\begin{array}{ccc}
					s^{\pm}_{l,\ep} & 0             \\
					0 & s^{\pm}_{l,\ep} \sin^{-1} \theta^\pm_{l,\ep} 
				\end{array}
				\right),
			\end{equation*}
			and
			$$\left(\boldsymbol z_{1,\varepsilon}^+,\cdots,\boldsymbol z_{j,\varepsilon}^+, \boldsymbol z_{1,\varepsilon}^-,\cdots, \boldsymbol z_{k,\varepsilon}^-\right)\to \left(\boldsymbol z_1^+,\cdots,\boldsymbol z_j^+, \boldsymbol z_1^-,\cdots, \boldsymbol z_k^-\right).$$
			\item[(iii)] The boundaries of level sets $\Omega_{m,\varepsilon}^+$, $\Omega_{n,\varepsilon}^-$ are $C^2$ smooth convex curves, which are parameterized as
			$$\partial \Omega_{m,\varepsilon}^+=\left\{\boldsymbol z_{m,\varepsilon}^++\left[\left(\frac{2\pi w'_\gamma(1)}{\kappa_m^+}\right)^{\frac{\gamma-1}{2}}\varepsilon+o(\varepsilon)\right](\cos\xi,\sin^{-1}\theta_{m,\ep}^+\sin\xi) \, \bigg| \, \xi\in[0, 2\pi) \right\},$$
			and
			$$\partial \Omega_{n,\varepsilon}^-=\left\{\boldsymbol z_{n,\varepsilon}^-+\left[\left(\frac{2\pi w'_\gamma(1)}{\kappa_n^-}\right)^{\frac{\gamma-1}{2}}\varepsilon+o(\varepsilon)\right](\cos\xi,\sin^{-1}\theta_{n,\ep}^-\sin\xi) \, \bigg| \, \xi\in[0, 2\pi) \right\}.$$
		\end{itemize}
	\end{theorem}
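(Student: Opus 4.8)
The plan is to carry out a Lyapunov--Schmidt reduction in the tangent space at each prospective vortex center, treating the sphere problem as a perturbation of $j+k$ decoupled planar generalized plasma problems. First I would fix a nondegenerate critical point $\boldsymbol z^* = (\boldsymbol z_1^+,\dots,\boldsymbol z_j^+,\boldsymbol z_1^-,\dots,\boldsymbol z_k^-)$ of $\mathcal K_{k+j}$ and, for parameters $\boldsymbol z_{l,\ep}^\pm$ in a small neighbourhood, push the equation \eqref{1-6} forward through the tangent mapping $\mathcal A_{l,\ep}^\pm$ onto the disk $B_R(\boldsymbol 0)\subset T_{\boldsymbol z_{l,\ep}^\pm}\mathbb S^2$. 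After rescaling by the factor $(s_{l,\ep}^\pm/\ep)^{2/(\gamma-1)}$, the local equation becomes, up to lower-order terms coming from the metric $\mathbf g_{\mathbb S^2}$, the curvature of the pushed-forward domain, the Coriolis-type term $W\cos\theta$, and the Green-function interactions with the other $j+k-1$ vortices, a perturbation of $-\Delta u = (u - a_{l,\ep}^\pm)_+^\gamma$. The ground state $w_\gamma$ of $-\Delta w = w_+^\gamma$ on $B_1(\boldsymbol 0)$ with $w\in H_0^1$ is the natural approximate solution; one chooses the flux constants $\mu_{l,\ep}^\pm$ so that, to leading order, the free boundary $\{u = a_{l,\ep}^\pm\}$ lands on the unit circle, which is what pins down the relation defining $s_{l,\ep}^\pm$ in (ii).

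Next I would set up the ansatz $\psi_\ep = \sum_l \psi_{l,\ep} + $ (global harmonic/regular corrections), where each $\psi_{l,\ep}$ is the rescaled copy of $w_\gamma$ plus a correction $\phi_{l,\ep}$ living in the orthogonal complement of the kernel of the linearized operator $L_\gamma := -\Delta - \gamma w_\gamma^{\gamma-1}\boldsymbol 1_{B_1}$. The nondegeneracy of $w_\gamma$ among radial functions, combined with the standard fact that the only non-radial kernel directions of $L_\gamma$ are spanned by $\partial_{y_1} w_\gamma,\partial_{y_2} w_\gamma$ (translations), gives a two-dimensional kernel per vortex. I would invert $L_\gamma$ on the orthogonal complement with a uniform bound, then solve the infinite-dimensional part of the equation by a contraction mapping / fixed-point argument in a weighted $C^{1}$ or $H^1$ space, obtaining $\phi_{l,\ep}$ with $\|\phi_{l,\ep}\| = O(\ep^{?})$ uniformly in the (still free) center parameters $\boldsymbol z_{l,\ep}^\pm$. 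The projections of the equation onto the $2(j+k)$ kernel directions then constitute the reduced (bifurcation) system; a careful expansion shows its leading term is precisely $\nabla \mathcal K_{k+j}(\boldsymbol z_{l,\ep}^\pm)$ up to a nonvanishing factor and an $o(1)$ error, so nondegeneracy of the critical point together with the implicit function theorem produces centers $\boldsymbol z_{l,\ep}^\pm \to \boldsymbol z_l^\pm$ solving the full system. This yields the existence half of the theorem and part of (ii)--(iii).

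For the asymptotic statements: claim (i) follows by integrating \eqref{1-6} against test functions, using that each rescaled bump has mass $\to \kappa_l^\pm$ (from the Pohozaev/divergence identity $\int_{B_1} w_\gamma^\gamma = -2\pi w_\gamma'(1)$ and the definition of $s_{l,\ep}^\pm$) and shrinking support; claim (ii)'s $C^3$ convergence for $\psi_\ep$ and $C^1$ for $\omega_\ep$ comes from elliptic bootstrapping on the local rescaled equation once $\phi_{l,\ep}\to 0$ in the appropriate norm, using $F(\tau)=\tau_+^\gamma\in C^1$ for $\gamma>1$. Claim (iii) is obtained by applying the implicit function theorem to the level-set equation $u_{l,\ep} = a_{l,\ep}^\pm$ near the circle $|\boldsymbol y|=1$: since $w_\gamma$ is radially strictly decreasing with $w_\gamma'(1)<0$, the free boundary is a $C^2$ graph over the circle that is a small perturbation of $|\boldsymbol y|=1$, and convexity persists under $C^2$-small perturbation; converting back through $\mathcal A_{l,\ep}^\pm$ gives the stated parameterization with the explicit radius $\left(\frac{2\pi w_\gamma'(1)}{\kappa_m^+}\right)^{(\gamma-1)/2}\ep + o(\ep)$.

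The main obstacle I anticipate is the interplay between the two small parameters and the matching of scales: unlike on $\mathbb R^2$, the scaling factor $s_{l,\ep}^\pm$ is only implicitly defined through the transcendental relation $(\ep/s_{l,\ep}^\pm)^{2/(\gamma-1)} w_\gamma'(1) = \frac{\kappa_l^\pm}{2\pi}\frac{|\ln\ep|}{|\ln s_{l,\ep}^\pm|}$, so one must first establish that this relation has a solution with $s_{l,\ep}^\pm/\ep \to 1$ (or more precisely control $\ln s_{l,\ep}^\pm / \ln \ep \to 1$) and track how errors in $s_{l,\ep}^\pm$ propagate into the reduced equation; getting the error estimates sharp enough that the reduced system is genuinely a small perturbation of $\nabla\mathcal K_{k+j}$ — and in particular that the logarithmic corrections do not destroy nondegeneracy — is the delicate point. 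A secondary technical nuisance is that the chart $\mathcal C_1$ degenerates at the poles and the tangent mapping $\mathcal A_{l,\ep}^\pm$ distorts the metric anisotropically, so the pushed-forward Laplacian is only asymptotically $-\Delta$; one must verify the metric perturbation terms are genuinely lower order in the weighted norms used for the contraction, uniformly as the centers vary in a compact set bounded away from $\{N,S\}$.
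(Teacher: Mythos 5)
Your proposal follows essentially the same route as the paper: a Lyapunov--Schmidt reduction built on the tangent-plane rescaling, with the ground state $w_\gamma$ of the generalized plasma problem as the local profile, the Dancer--Yan nondegeneracy giving a two-dimensional translational kernel per vortex, a contraction mapping for the orthogonal part, and a reduced $(2j+2k)$-dimensional system whose leading term is $\nabla\mathcal K_{k+j}$, resolved by nondegeneracy; the treatment of (i)--(iii) (mass computation, elliptic bootstrap, implicit function theorem on the level set with the matching condition fixing $s_{l,\ep}^\pm$) also coincides with the paper's. The delicate points you flag — the transcendental relation for $s_{l,\ep}^\pm$ (note it forces $s_{l,\ep}^\pm/\ep\to(2\pi w_\gamma'(1)/\kappa_l^\pm)^{(\gamma-1)/2}$ rather than $1$, consistent with (iii)) and the anisotropic metric distortion of the tangent chart — are exactly the ones the paper handles via the gluing condition \eqref{2-1} and the frozen-coefficient tangent maps $A_l^\pm$.
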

	
	Since $(-\Delta_{\mathbb S^2})\cos\theta=2\cos\theta$, it is proved in Lemma 1.1 of \cite{SZ} that once we construct a localized vorticity solution $\omega_\ep$ on stationary sphere ($\boldsymbol \gamma=0$), then we obtain a class of non-localized solutions with different sphere rotation speed $\boldsymbol \gamma$. In fact, if we let $\tilde\omega_\ep=\omega_\varepsilon+2\boldsymbol \gamma\cos\theta$ and $\tilde{\boldsymbol v}=\nabla^\perp_{\mathbb S^2}\psi_\varepsilon+(0,\boldsymbol \gamma\sin\theta)$, then $\tilde\omega_\ep$ gives a traveling wave solution at the speed $W_{\boldsymbol \gamma}=W-\boldsymbol \gamma$.
	
	\smallskip
	
	For the case $\gamma=1$, the limit equation of \eqref{1-6} corresponds to the classic plasma problem $-\Delta u=u_+$.  Since the proof and asymptotic estimate are different from the case $\gamma>1$, we state the result separately.
    \begin{theorem}\label{thm2}
    	Suppose that $\gamma=1$. Then for any nondegenerate critical point $(\boldsymbol z_1^+,\cdots,\boldsymbol z_j^+, \boldsymbol z_1^-,\cdots, \boldsymbol z_k^-)$ of Kirchhoff--Routh function $\mathcal K_{k+j}$ defined by \eqref{1-5}, there exists a small $\varepsilon_0>0$, such that for each $\varepsilon\in (0,\varepsilon_0]$, \eqref{1-6} has a solution $\psi_\varepsilon$. Let $\omega_\varepsilon=(-\Delta_{\mathbb S^2})\psi_\varepsilon$ be the vorticity function. We have the following asymptotic estimates:
    	\begin{itemize}
    		\item[(i)] One has
    		$$\omega_\varepsilon\rightharpoonup \sum\limits_{m=1}^{j}\kappa_m^+\boldsymbol \delta_{\boldsymbol z_m^+}-\sum\limits_{n=1}^{k}\kappa_n^-\boldsymbol \delta_{\boldsymbol z_n^-} \quad \quad \mathrm{as} \ \varepsilon\to 0^+,$$
    		where the convergence is in the sense of measures, and $\kappa_l^\pm$ is the circulation for each point vortex.
    		\item [(ii)] Let $\tau>0$ be the constant such that $1$ is the first eigenvalue of $-\Delta$ in $B_\tau(\boldsymbol 0)$ with the zero Dirichlet boundary condition, and the radial decreasing function $w_1(\boldsymbol y)> 0$ be the first eigenfunction for $-\Delta$ in $B_{\tau}(\boldsymbol 0)$ with $w_1(\boldsymbol 0)=1$. Then as $\ep\to 0^+$, one  has
    		$$\frac{\kappa}{\kappa_l^\pm}\cdot\psi_\varepsilon(\mathcal A^\pm_{l,\ep}\boldsymbol y+\boldsymbol z^\pm_{l,\ep})\to w_1(\boldsymbol y) \quad \mathrm{in} \ C^3(B_{2\tau}(\boldsymbol 0)),$$
    		$$\frac{\kappa}{\kappa_l^\pm}\cdot\omega_\varepsilon(\mathcal A^\pm_{l,\ep}\boldsymbol y+\boldsymbol z^\pm_{l,\ep})\to \frac{(w_1(\boldsymbol y))_+}{\ep^2} \quad \mathrm{in} \ C^1(B_{2\tau}(\boldsymbol 0)),$$
    		where $\kappa=\int_{\mathbb R^2} (w_1(\boldsymbol y))_+d\boldsymbol y$, the matrix of tangent mapping $\mathcal A^\pm_{l,\ep}$ is 
    		\begin{equation*}
    			\mathrm{Mat}(\mathcal A^\pm_{l,\ep})=\left(
    			\begin{array}{ccc}
    				\ep & 0             \\
    				0 & \ep \sin^{-1} \theta^\pm_{l,\ep} 
    			\end{array}
    			\right),
    		\end{equation*}
    		and
    		$$\left(\boldsymbol z_{1,\varepsilon}^+,\cdots,\boldsymbol z_{j,\varepsilon}^+, \boldsymbol z_{1,\varepsilon}^-,\cdots, \boldsymbol z_{k,\varepsilon}^-\right)\to \left(\boldsymbol z_1^+,\cdots,\boldsymbol z_j^+, \boldsymbol z_1^-,\cdots, \boldsymbol z_k^-\right).$$
    		\item[(iii)] The boundaries of level sets $\Omega_{m,\varepsilon}^+$, $\Omega_{n,\varepsilon}^-$, are $C^2$ smooth convex curves, which are parameterized as
    		$$\partial \Omega_{m,\varepsilon}^+=\left\{\boldsymbol z_{m,\varepsilon}^++\left[\tau\varepsilon+o(\varepsilon)\right](\cos\xi,\sin^{-1}\theta_{m,\ep}^+\sin\xi) \, \big| \, \xi\in[0, 2\pi) \right\},$$
    		and
    		$$\partial \Omega_{n,\varepsilon}^-=\left\{\boldsymbol z_{n,\varepsilon}^-+\left[\tau\varepsilon+o(\varepsilon)\right](\cos\xi,\sin^{-1}\theta_{n,\ep}^-\sin\xi) \, \big| \, \xi\in[0, 2\pi) \right\}.$$
    	\end{itemize}
    \end{theorem}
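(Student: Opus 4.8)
The plan is to follow the Lyapunov--Schmidt reduction scheme set up for the case $\gamma>1$ in Theorem~\ref{thm1}, but adapted to the linear profile $F(\tau)=\tau_+$, where the limiting local equation $-\Delta u=u_+$ is an eigenvalue problem rather than a scaling-invariant semilinear equation. I would first fix the ansatz: on the chart $\mathcal C_1$, near each prescribed center $\boldsymbol z_l^\pm$, pull back by the tangent mapping $\mathcal A^\pm_{l,\ep}$ (which at leading order identifies a geodesic ball on $\mathbb S^2$ with a Euclidean ball, since $\mathbf g_{\mathbb S^2}$ is the identity plus $O(\ep^2)$ in these rescaled coordinates), and write $\psi_\varepsilon$ as a superposition of $j+k$ bumps, the $l$-th one being $\frac{\kappa_l^\pm}{\kappa}\,w_1\!\big((\mathcal A^\pm_{l,\ep})^{-1}(\cdot-\boldsymbol z^\pm_{l,\ep})\big)$ truncated to $B_\delta(\boldsymbol z_l^\pm)$, plus a lower-order correction $\phi_\varepsilon$. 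The flux constants $\mu_{l,\varepsilon}^\pm$ are chosen so that the support of the $l$-th truncated bump is a ball of radius $\tau\varepsilon(1+o(1))$ inside $B_\delta$; here $\tau$ is the first Dirichlet eigenvalue radius from part (ii). Because $w_1$ is exactly an eigenfunction, there is no "near-resonance" as in the $\gamma>1$ case coming from scaling, but instead a genuine resonance: the linearized operator $-\Delta-\boldsymbol 1_{B_\tau}$ has a two-dimensional kernel spanned by $\partial_{y_1}w_1,\partial_{y_2}w_1$ at each center, together with a one-dimensional cokernel direction tied to the eigenvalue constraint. This is the structural reason the normalization by $\kappa/\kappa_l^\pm$ and the choice of $\tau$ (rather than a free scaling parameter $s_{l,\ep}^\pm$) appear.

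Next I would set up the fixed-point problem for $\phi_\varepsilon$ in a weighted Hölder or $H^1$ space, orthogonal to the approximate kernel. The key linear estimate is invertibility of the projected linearized operator with a uniform bound; one obtains this by (a) the nondegeneracy of $w_1$ as first eigenfunction (simplicity of the first eigenvalue gives that the only kernel is the translations, which are projected out), and (b) a gluing argument across the $j+k$ disjoint caps, using that $H(\boldsymbol z,\boldsymbol z')$ in the splitting \eqref{1-2} is $C^1$ and the interaction between distinct caps is $O(1/|\ln\varepsilon|)$ or smaller. The error term — the difference between $-\Delta_{\mathbb S^2}$ applied to the ansatz and the right side of \eqref{1-6} — has three sources: the curvature discrepancy between $\Delta_{\mathbb S^2}$ and the flat Laplacian on the scale $\ep$ (size $O(\ep^2\,|\ln\ep|)$-ish after accounting for the $\sin^{-1}\theta$ factor in $\mathcal A$), the regular part $H$ of the Green function and the cross-interactions of the bumps (size $O(1/|\ln\ep|)$), and the truncation near $\partial B_\delta$ (exponentially small, since $w_1$ is supported in $B_\tau$ well inside). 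The contraction mapping theorem then yields $\phi_\varepsilon$ with $\|\phi_\varepsilon\|=o(1)$ in the appropriate norm, giving a solution for every choice of centers $\boldsymbol z^\pm_{l,\ep}$ near the $\boldsymbol z^\pm_l$ and every admissible choice of the single remaining scalar parameter per cap (fixed here by the eigenvalue constraint / the relation defining $\kappa$).

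The reduced finite-dimensional problem is the vanishing of the projections onto the $2(j+k)$ approximate kernel directions. As in \cite{Ao,CPY}, this reduces, after dividing by the natural power of $|\ln\varepsilon|$, to $\nabla \mathcal K_{k+j}(\boldsymbol z_{1,\varepsilon}^+,\dots,\boldsymbol z_{k,\varepsilon}^-)+o(1)=0$, because the leading-order energy of the configuration is exactly the Kirchhoff--Routh function \eqref{1-5} (the self-interaction giving the $H$-terms, the mutual interactions the $G$-terms, and the $W\cos\theta$ drift the last two terms). By the assumed nondegeneracy of the critical point, the implicit function theorem (or Brouwer degree) produces $\boldsymbol z^\pm_{l,\ep}\to\boldsymbol z^\pm_l$. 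Once the solution is constructed, the asymptotics (i)--(iii) follow: (i) is weak-$*$ convergence of $\omega_\varepsilon$ to the sum of Diracs, obtained from $\int\omega_\varepsilon=\kappa_l^\pm$ on each cap and shrinking support; (ii) is a rescaling plus elliptic bootstrap — the rescaled $\psi_\varepsilon$ solves a small perturbation of $-\Delta u=u_+$ in $B_{2\tau}$, so Schauder estimates upgrade $H^1$ convergence to $C^3$, and differentiating once more gives the $C^1$ convergence of the vorticity; (iii) comes from the implicit function theorem applied to the level-set equation $\psi_\varepsilon+W\cos\theta=\mu^\pm_{l,\varepsilon}$ near $\partial B_\tau$ in rescaled variables, where $w_1'(\tau)\ne 0$ (Hopf lemma) guarantees the boundary is a $C^2$ graph, and the strict radial monotonicity of $w_1$ plus the $o(\varepsilon)$ control of $\phi_\varepsilon$ gives convexity of the curve for $\varepsilon$ small.

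I expect the main obstacle to be the linear theory at step two in the resonant regime $\gamma=1$: unlike the $\gamma>1$ case, the scaling parameter is not free, so one must handle the genuine one-dimensional degeneracy associated with the eigenvalue and show that the flux constant $\mu^\pm_{l,\varepsilon}$ can be tuned to absorb it while keeping the support radius $\tau\varepsilon(1+o(1))$; establishing a uniform-in-$\varepsilon$ bound for the inverse of the projected linearization on the disjoint union of caps, with the curvature-perturbed metric, is the technical heart of the argument.
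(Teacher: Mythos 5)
Your proposal follows essentially the same route as the paper: a Lyapunov--Schmidt reduction with the rescaled first Dirichlet eigenfunction $\frac{\kappa_l^\pm}{\kappa}w_1$ as the inner profile, invertibility of the projected linearization via the Flucher--Wei characterization of the kernel of $-\Delta-\boldsymbol 1_{B_\tau}$ as the span of translations, a contraction mapping for the error $\phi_\varepsilon$, and a finite-dimensional reduction whose vanishing is equivalent to criticality of the Kirchhoff--Routh function, settled by nondegeneracy. The only cosmetic difference is that the paper works with the sup norm plus $W^{-1,p}$ control on small caps rather than weighted H\"older/$H^1$ spaces, and it does not need to treat any extra ``eigenvalue cokernel'' direction --- the matching of the eigenfunction to the outer logarithm at radius $\tau\varepsilon$ is automatic from $\kappa=-2\pi\tau w_1'(\tau)$.
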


	This paper is organized as follows. In Section \ref{sec2}, we prove the existence of solutions to \eqref{1-6} with $\gamma>1$ by Lyapunov--Schmidt reduction argument. We first give a series of approximate solutions and derive the linearized equation. Then we consider the projection problem and solve it by the contraction mapping theorem. The last step of our construction is to choose suitable locations for vortices so that the projection operator equals the identity. In Section \ref{sec3}, we complete the discussion by giving the solution series to \eqref{1-6} for the case $\gamma=1$.

	\bigskip
	
	\section{Regularization for the case $\gamma>1$}\label{sec2}
	
	Since the approximate solution for case $\gamma=1$ is very different from that for $\gamma>1$, we separate the discussion for $\gamma=1$ in Section \ref{sec3} and start with $\gamma>1$.
	
	\subsection{The approximate solution}
	
	Note that the stream function for point vortex system \eqref{1-4} is
	$$\psi^*=\sum_{m=1}^j\kappa^+_mG(\boldsymbol z,\boldsymbol z^+_{m,\varepsilon})-\sum_{n=1}^k\kappa^-_nG(\boldsymbol z,\boldsymbol z^-_{n,\varepsilon}),$$
	which has a log-singularity at each vortex point $\boldsymbol z^\pm_{l,\varepsilon}$. Our first step is to define a series of regular functions to substitute $\kappa^\pm_lG(\boldsymbol z,\boldsymbol z^\pm_{l,\varepsilon})$. Since the support of vorticity function $\omega_\ep$ shrink to $j+k$ points on $\mathbb S^2$ as $\ep\to0^+$, the limit function for $\psi_\ep$ at $\boldsymbol z^\pm_{l,\varepsilon}$ should satisfy $-\Delta w=w_+^\gamma$ in the tangent space $T_{\boldsymbol z_l^\pm}{\mathbb S^2}$, where $\Delta$ is standard Laplacian in $\mathbb R^2$. Inspired by this observation, for $\gamma>1$, we let $w_\gamma(\boldsymbol y)$ be the unique radial solution of the generalized plasma problem
	\begin{equation*}
		-\Delta w_\gamma(\boldsymbol y)=w_\gamma^\gamma, \quad w_\gamma\in H_0^1(B_1(\boldsymbol 0)), \quad w_\gamma(\boldsymbol y)>0 \ \mathrm{in} \ B_1(\boldsymbol 0),
	\end{equation*}
	which is also known as the ground state solution, and define the tangent mapping $A: (\theta,\varphi)\to (x_1,x_2)$ from $\mathbb S^2$ to $T_{\boldsymbol z}{\mathbb S^2}$ by the matrix
	\begin{equation*}
		\mathrm{Mat}(A)=\left(
		\begin{array}{ccc}
			1 & 0             \\
			0 & \sin\theta
		\end{array}
		\right).
	\end{equation*}
	Then the singular part $\kappa^\pm_l\Gamma(\boldsymbol z,\boldsymbol z^\pm_{l,\varepsilon})$ can be smoothed by
	\begin{equation*}
		V^\pm_{l,\varepsilon}(\boldsymbol z)=\left\{
		\begin{array}{lll}
			\frac{\kappa_l^\pm}{2\pi}\ln\frac{1}{\varepsilon}+\left(\frac{\ep}{s_{l,\varepsilon}^\pm}\right)^{\frac{2}{\gamma-1}}w_\gamma\left(\frac{A(\boldsymbol z-\boldsymbol z_{l,\varepsilon}^\pm)}{s_{l,\varepsilon}^\pm}\right), \ \ \ &\mathrm{if} \ |A(\boldsymbol z-\boldsymbol z^\pm_{l,\varepsilon})|\le s_{l,\varepsilon}^\pm,\\
			\frac{\kappa_l^\pm}{2\pi}\frac{|\ln\varepsilon|}{|\ln s_{l,\varepsilon}^\pm|}\cdot\ln|A(\boldsymbol z-\boldsymbol z^\pm_{l,\varepsilon})|,&\mathrm{if} \ |A(\boldsymbol z-\boldsymbol z^\pm_{l,\varepsilon})|\ge s_{l,\varepsilon}^\pm,
		\end{array}
		\right.
	\end{equation*}
	where we impose
	\begin{equation}\label{2-1}
		s_{l,\ep}^\pm\beta_{l,\ep}^\pm=\left(\frac{\ep}{s_{l,\ep}^\pm}\right)^{\frac{2}{\gamma-1}}w'_\gamma(1)=\frac{\kappa_l^\pm}{2\pi}\frac{|\ln\varepsilon|}{|\ln s_{l,\varepsilon}^\pm|}.
	\end{equation}
	Here $\beta_{l,\ep}^\pm$ is the value of $-\partial_\theta V_{l,\varepsilon}^\pm$ at $(s_\varepsilon+\theta_{l,\ep}^\pm,\varphi_{l,\ep}^\pm)$ to make $V^\pm_{l,\varepsilon}$ a $C^1$ function. By the formulation of $V_{i,\varepsilon}^\pm$, it holds the following integration equation.
	\begin{equation*}
		V^\pm_{l,\varepsilon}(\boldsymbol z)=\frac{1}{\varepsilon^2}\int_{\{V^\pm_{l,\varepsilon}(\boldsymbol z)>\frac{\kappa_l^\pm}{2\pi}\ln\frac{1}{\varepsilon}\}} \Gamma(\theta,\varphi,\theta',\varphi')\left(V^\pm_{l,\varepsilon}-\frac{\kappa_l^\pm}{2\pi}\ln\frac{1}{\varepsilon}\right)_+^\gamma d\boldsymbol \sigma(\boldsymbol z').
	\end{equation*}
	For the remaining regular part $\kappa^\pm_l H(\boldsymbol z,\boldsymbol z^\pm_{l,\varepsilon})$, we let
	\begin{equation*}
		R^\pm_{l,\varepsilon}(\boldsymbol z)=\frac{1}{\varepsilon^2}\int_{\{V^\pm_{l,\varepsilon}(\boldsymbol z)>\frac{\kappa_l^\pm}{2\pi}\ln\frac{1}{\varepsilon}\}} H(\theta,\varphi,\theta',\varphi')\left(V^\pm_{l,\varepsilon}-\frac{\kappa_l^\pm}{2\pi}\ln\frac{1}{\varepsilon}\right)_+^\gamma d\boldsymbol \sigma(\boldsymbol z').
	\end{equation*} 
	According to the definition of $V^\pm_{i,\varepsilon}(\boldsymbol z)$ and $R^\pm_{i,\varepsilon}(\boldsymbol z)$, one can verify that 
	\begin{equation*}
		(-\Delta_{\mathbb S^2})\left(V^\pm_{l,\varepsilon}+R^\pm_{l,\varepsilon}\right)=\frac{1}{\varepsilon^2}\cdot\left(V^\pm_{l,\varepsilon}(\boldsymbol z)-\frac{\kappa_l^\pm}{2\pi}\ln\frac{1}{\varepsilon}\right)_+^\gamma.
	\end{equation*}
	Thus we can split $\psi_\varepsilon(\boldsymbol z)$ as
	\begin{align*}
		\psi_\varepsilon(\boldsymbol z)&=\sum\limits_{m=1}^jV^+_{m,\varepsilon}+\sum\limits_{m=1}^jR^+_{m,\varepsilon}-\sum\limits_{n=1}^kV^-_{n,\varepsilon}-\sum\limits_{n=1}^kR^-_{n,\varepsilon}+\phi_\varepsilon\\
		&:= \Psi_\varepsilon+\phi_\varepsilon 
	\end{align*}
	with $\Psi_\varepsilon(\boldsymbol z)$ the approximate solution, and $\phi_\varepsilon(\boldsymbol z)$ a small error term. To localize each vortex, we denote $B_\delta(\boldsymbol z_i^\pm)\subset \mathbb S^2$ as the spherical cap region with radius $\delta$. By substituting $\psi_\ep$ with the above decomposition into \eqref{1-6}, the equation can be linearized as
	\begin{align*}
		0&=-\varepsilon^2\Delta_{\mathbb S^2}\left(\sum\limits_{m=1}^jV^+_{i,\varepsilon}+\sum\limits_{m=1}^jR^+_{i,\varepsilon}-\sum\limits_{n=1}^kV^-_{i,\varepsilon}-\sum\limits_{n=1}^kR^-_{i,\varepsilon}+\phi_\varepsilon\right)\\
		&\quad-\sum\limits_{m=1}^j\boldsymbol1_{B_\delta(\boldsymbol z_m^+)}\left(\psi_\varepsilon+W_\varepsilon\cos\theta-\mu_{m,\varepsilon}^+\right)_+^\gamma+\sum\limits_{n=1}^k\boldsymbol1_{ B_\delta(\boldsymbol z_n^-)}\left(-\psi_\varepsilon-W_\varepsilon\cos\theta-\mu_{n,\varepsilon}^-\right)_+^\gamma\\
		&=\sum\limits_{m=1}^j\left(-\varepsilon^2\Delta_{\mathbb S^2}\left(V^+_{m,\varepsilon}+R^+_{m,\varepsilon}\right)-\left(V^+_{m,\varepsilon}(\boldsymbol z)-\frac{\kappa_m^+}{2\pi}\ln\frac{1}{\varepsilon}\right)_+^\gamma\right)\\
		&\quad -\sum\limits_{n=1}^k\left(-\varepsilon^2\Delta_{\mathbb S^2}\left(V^-_{n,\varepsilon}+R^-_{n,\varepsilon}\right)-\left(V^-_{n,\varepsilon}(\boldsymbol z)-\frac{\kappa_n^-}{2\pi}\ln\frac{1}{\varepsilon}\right)_+^\gamma\right)\\
		& \ \ \ +\left(-\varepsilon^2\Delta_{\mathbb S^2}\phi_\varepsilon-\sum\limits_{m=1}^j \gamma\left(V^+_{m,\varepsilon}(\boldsymbol z)-\frac{\kappa_m^+}{2\pi}\ln\frac{1}{\varepsilon}\right)_+^{\gamma-1}\phi_\varepsilon-\sum\limits_{n=1}^k\gamma\left(V^-_{n,\varepsilon}(\boldsymbol z)-\frac{\kappa_n^-}{2\pi}\ln\frac{1}{\varepsilon}\right)_+^{\gamma-1}\phi_\varepsilon\right)\\
		& \ \ \ -\sum\limits_{m=1}^j\left(\boldsymbol1_{B_\delta(\boldsymbol z_m^+)}\left(\psi_\varepsilon+W_\varepsilon\cos\theta-\mu_{m,\varepsilon}^+\right)_+^\gamma-\left(V^+_{m,\varepsilon}(\boldsymbol z)-\frac{\kappa_m^+}{2\pi}\ln\frac{1}{\varepsilon}\right)_+^\gamma\right.\\
		&\left. \quad\quad-\gamma\left(V^+_{m,\varepsilon}(\boldsymbol z)-\frac{\kappa_m^+}{2\pi}\ln\frac{1}{\varepsilon}\right)_+^{\gamma-1}\phi_\varepsilon\right)\\
		& \ \ \ +\sum\limits_{n=1}^k\left(\boldsymbol1_{ B_\delta(\boldsymbol z_n^-)}\left(-\psi_\varepsilon-W_\varepsilon\cos\theta-\mu_{n,\varepsilon}^-\right)_+^\gamma-\left(V^-_{n,\varepsilon}(\boldsymbol z)-\frac{\kappa_n^-}{2\pi}\ln\frac{1}{\varepsilon}\right)_+^\gamma\right.\\
		&\left.\quad\quad+\gamma\left(V^-_{n,\varepsilon}(\boldsymbol z)-\frac{\kappa_n^-}{2\pi}\ln\frac{1}{\varepsilon}\right)_+^{\gamma-1}\phi_\varepsilon\right)\\
		&=\varepsilon^2\mathbb L_\varepsilon\phi_\varepsilon-\varepsilon^2N_\varepsilon(\phi_\varepsilon),
	\end{align*}
	where
	\begin{equation*}
		\mathbb L_\varepsilon\phi_\varepsilon:=(-\Delta_{\mathbb S^2})\phi_\varepsilon-\frac{1}{\ep^2}\sum\limits_{m=1}^j \left(V^+_{m,\varepsilon}(\boldsymbol z)-\frac{\kappa_m^+}{2\pi}\ln\frac{1}{\varepsilon}\right)_+^{\gamma-1}\phi_\varepsilon-\frac{1}{\ep^2}\sum\limits_{n=1}^k\left(V^-_{n,\varepsilon}(\boldsymbol z)-\frac{\kappa_n^-}{2\pi}\ln\frac{1}{\varepsilon}\right)_+^{\gamma-1}\phi_\varepsilon
	\end{equation*}
	is the linear term, and
	\begin{align*}
		N_\varepsilon(\phi_\varepsilon)=&\frac{1}{\ep^2}\sum\limits_{m=1}^j\left(\boldsymbol1_{B_\delta(\boldsymbol z_m^+)}\left(\psi_\varepsilon+W_\varepsilon\cos\theta-\mu_{m,\varepsilon}^+\right)_+^\gamma-\left(V^+_{m,\varepsilon}(\boldsymbol z)-\frac{\kappa_m^+}{2\pi}\ln\frac{1}{\varepsilon}\right)_+^\gamma\right.\\
		&\left. \quad-\gamma\left(V^+_{m,\varepsilon}(\boldsymbol z)-\frac{\kappa_m^+}{2\pi}\ln\frac{1}{\varepsilon}\right)_+^{\gamma-1}\phi_\varepsilon\right)\\
		&-\frac{1}{\ep^2}\sum\limits_{n=1}^k\left(\boldsymbol1_{ B_\delta(\boldsymbol z_n^-)}\left(-\psi_\varepsilon-W_\varepsilon\cos\theta-\mu_{n,\varepsilon}^-\right)_+^\gamma-\left(V^-_{n,\varepsilon}(\boldsymbol z)-\frac{\kappa_n^-}{2\pi}\ln\frac{1}{\varepsilon}\right)_+^\gamma\right.\\
		&\left.\quad+\gamma\left(V^-_{n,\varepsilon}(\boldsymbol z)-\frac{\kappa_n^-}{2\pi}\ln\frac{1}{\varepsilon}\right)_+^{\gamma-1}\phi_\varepsilon\right)\\
		:=&N_\varepsilon^+(\phi_\varepsilon)-N_\varepsilon^-(\phi_\varepsilon)
	\end{align*}
	is the nonlinear perturbation. To make $N_\varepsilon(\phi_\varepsilon)$ sufficiently small, the flux constant $\mu_{m,\varepsilon}^+$ and $\mu_{n,\varepsilon}^-$ are chosen to satisfy
	\begin{align*}
		-\frac{\kappa_m^+}{2\pi}\ln\frac{1}{\varepsilon}=&-\sum\limits_{i\neq m}^j\kappa_i^+G(\boldsymbol z_{m,\varepsilon}^+,\boldsymbol z_{i,\varepsilon}^+)-\sum\limits_{l=1}^k\kappa_l^-G(\boldsymbol z_{m,\varepsilon}^+,\boldsymbol z_{l,\varepsilon}^-)\\
		&-\kappa_m^+ H(\boldsymbol z_{m,\varepsilon}^+,\boldsymbol z_{m,\varepsilon}^+)-W\cos\theta_{m,\varepsilon}^++\mu_{m,\varepsilon}^+,
	\end{align*}
	and
	\begin{align*}
		-\frac{\kappa_n^-}{2\pi}\ln\frac{1}{\varepsilon}=&-\sum\limits_{l\neq n}^j\kappa_l^-G(\boldsymbol z_{n,\varepsilon}^-,\boldsymbol z_{l,\varepsilon}^-)-\sum\limits_{i=1}^k\kappa_i^+G(\boldsymbol z_{n,\varepsilon}^-,\boldsymbol z_{i,\varepsilon}^+)\\
		&- \kappa_n^-H(\boldsymbol z_{n,\varepsilon}^-,\boldsymbol z_{n,\varepsilon}^-)-W\cos\theta_{n,\varepsilon}^-+\mu_{n,\varepsilon}^-.
	\end{align*}
	Having done all these preparations, we are now to solve the semilinear problem
	\begin{equation}\label{2-2}
		\mathbb L_\varepsilon\phi_\varepsilon=N_\varepsilon(\phi_\varepsilon).
	\end{equation}
	The common practice to deal with \eqref{2-2} is letting $\mathbb L_\ep^{-1}$ act on both side and apply the fixed point theory on $\phi_\varepsilon=\mathbb L_\ep^{-1}N_\varepsilon(\phi_\varepsilon)$ by estimating the nonlinear perturbation $N_\varepsilon(\phi_\varepsilon)$. However, this approach requires that $\mathbb L_\ep$ is invertible, which is generally not true for our situation. Thus we first prove the existence of $\phi_\ep$ in a projection space, and then solve a finite-dimensional problem in the kernel of $\mathbb L_\ep$, which is known as the Lyapunov--Schmidt reduction.

	\subsection{The linear problem}
	
	To explicit the projection we make for \eqref{2-2}, let us first consider the linearized operator for the generalized plasma problem $-\Delta w_\gamma-(w_\gamma)_+^\gamma=0$ as
	\begin{equation}\label{2-3}
		\mathbb L_0\phi:=-\Delta\phi-(w_\gamma)_+^{\gamma-1}\phi.
	\end{equation}
	Dancer and Yan~\cite{DY} prove the following result 
	\begin{theorem}\label{thm3}
		Let $v\in L^\infty(\mathbb R^2)\cap C(\mathbb R^2)$ be a solution to $\mathbb L _0v=0$ with the operator $\mathbb L_0$ defined in \eqref{2-3}. Then
		\begin{equation*}
			v\in\mathrm{span} \left\{\frac{\partial w_\gamma}{\partial y_1},\frac{\partial w_\gamma}{\partial y_2}\right\}.
		\end{equation*}
	\end{theorem}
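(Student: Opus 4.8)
My plan is to prove this by separating variables and analysing each angular Fourier mode as a second order ODE. Since the linearized potential $q(|\boldsymbol y|):=\gamma\,(w_\gamma(|\boldsymbol y|))_+^{\gamma-1}$ (the derivative of $\tau\mapsto\tau_+^\gamma$ at $w_\gamma$, i.e.\ exactly the coefficient for which $\partial_{y_i}w_\gamma$ lies in the kernel) is bounded and supported in $\overline{B_1(\boldsymbol 0)}$, elliptic regularity first upgrades any $v\in L^\infty\cap C$ solving $\mathbb L_0v=0$ to $C^{1,\alpha}_{\mathrm{loc}}(\mathbb R^2)$. Writing $v$ in polar coordinates $(r,\theta)$ and expanding $v(r,\theta)=\sum_{k\ge0}\bigl(a_k(r)\cos k\theta+b_k(r)\sin k\theta\bigr)$, each coefficient is a bounded function on $(0,\infty)$, regular at the origin, solving
\begin{equation*}
	-u''-\tfrac1r u'+\tfrac{k^2}{r^2}u-q(r)u=0\qquad\text{on }(0,\infty).\tag{$E_k$}
\end{equation*}
Regularity at $r=0$ forces each $a_k,b_k$ to be a scalar multiple of the Frobenius branch of $(E_k)$ behaving like $r^{k}$ (like a constant, when $k=0$); the complementary branch behaves like $r^{-k}$ (like $\ln r$) and is unbounded near $0$. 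The theorem then reduces to three claims: $(E_0)$ has no nontrivial bounded solution; every bounded solution of $(E_1)$ is a multiple of $w_\gamma'$; and $(E_k)$ has no nontrivial bounded solution for $k\ge2$. Granting these, $a_0\equiv0$, $a_k\equiv b_k\equiv0$ for $k\ge2$, and $a_1=\alpha w_\gamma'$, $b_1=\beta w_\gamma'$, so $v=\alpha\,\partial_{y_1}w_\gamma+\beta\,\partial_{y_2}w_\gamma$.

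Two preliminaries feed the argument. First, $w_\gamma$ is radially strictly decreasing: from $(r w_\gamma')'=-r\,w_\gamma^\gamma<0$ on $(0,1)$ and $rw_\gamma'|_{r=0}=0$ we get $w_\gamma'<0$ on $(0,1]$; the natural $C^2$ extension across $\partial B_1$ is $w_\gamma'(1)\ln|\boldsymbol y|$, so $w_\gamma'(r)=w_\gamma'(1)/r<0$ for $r\ge1$, hence $w_\gamma'<0$ on all of $(0,\infty)$ and $w_\gamma'(1)<0$. Second, differentiating $-\Delta w_\gamma=(w_\gamma)_+^\gamma$ shows $\partial_{y_1}w_\gamma=w_\gamma'(r)\cos\theta$ and $\partial_{y_2}w_\gamma=w_\gamma'(r)\sin\theta$ lie in $\ker\mathbb L_0$, so $w_\gamma'$ solves $(E_1)$; since $w_\gamma'(r)=-\tfrac12 w_\gamma(0)^\gamma r+O(r^3)$ near $0$ it is precisely the regular branch of $(E_1)$, which handles $k=1$. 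For $k=0$ I would use the scaling solution $Z:=\tfrac{2}{\gamma-1}w_\gamma+\boldsymbol y\cdot\nabla w_\gamma$, obtained by differentiating the family $w_\lambda(\boldsymbol y):=\lambda^{2/(\gamma-1)}w_\gamma(\lambda\boldsymbol y)$ at $\lambda=1$; it satisfies $\mathbb L_0Z=0$ on all of $\mathbb R^2$ (outside $\overline{B_1}$ one has $Z=\tfrac{2}{\gamma-1}w_\gamma'(1)\ln|\boldsymbol y|+w_\gamma'(1)$, which is harmonic), is radial with $Z(0)=\tfrac{2}{\gamma-1}w_\gamma(0)\ne0$, hence is the regular branch of $(E_0)$; but $Z$ grows like $\ln r$ at infinity, so no nonzero multiple of it is bounded, which gives $a_0\equiv0$.

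The case $k\ge2$ is the main obstacle and the technical heart of the proof. Let $v_k$ be the regular solution of $(E_k)$, normalized so that $v_k(r)>0$ for small $r>0$, and suppose for contradiction that $v_k$ is bounded on $(0,\infty)$. Outside $B_1$ the potential vanishes, so $(E_k)$ forces $v_k=c_1r^k+c_2r^{-k}$ on $[1,\infty)$; boundedness gives $c_1=0$ and $v_k=c_2r^{-k}$ there, with $c_2\ne0$ by unique continuation. I would compare $v_k$ with $\psi:=w_\gamma'$ through the Wronskian-type quantity $W(r):=r\bigl(v_k'\psi-v_k\psi'\bigr)=r\psi^2(v_k/\psi)'$. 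Writing $(E_k)$ and $(E_1)$ in Sturm–Liouville form $(ru')'=(k^2r^{-1}-rq)u$ yields the identity
\begin{equation*}
	W'(r)=\frac{k^2-1}{r}\,v_k(r)\,\psi(r).
\end{equation*}
From $v_k\sim(\text{const}>0)\,r^k$ and $\psi\sim(\text{const}<0)\,r$ near $0$ one gets $W(0^+)=0$, and $W'<0$ wherever $v_k>0$ (since $\psi<0$). If $v_k>0$ on all of $(0,\infty)$ then $c_2>0$, and the explicit value $W(r)=(1-k)c_2w_\gamma'(1)r^{-k-1}>0$ on $[1,\infty)$ contradicts the strict decrease of $W$ from $W(0^+)=0$. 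If instead $v_k$ has a first zero $r_0\in(0,\infty)$, then $W(r_0)<0$ by strict decrease, while $W(r_0)=r_0\,v_k'(r_0)\,\psi(r_0)>0$ because $v_k'(r_0)<0$ and $\psi(r_0)<0$ — again a contradiction. Hence $v_k$ is unbounded, so $(E_k)$ has no nontrivial bounded solution for $k\ge2$, completing the reduction.

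In summary, the routine inputs are elliptic regularity, Frobenius theory, and the differentiation identities for $w_\gamma$; the delicate point is the sign bookkeeping in the Wronskian identity together with the monotonicity and $C^2$-extension properties of $w_\gamma$ that make the boundary contributions at $r=0$ and $r=\infty$ come out with definite, incompatible signs for $k\ge2$. (Read with the linearized potential $\gamma(w_\gamma)_+^{\gamma-1}$, this is exactly the nondegeneracy statement of Dancer and Yan in \cite{DY}, so the details above follow their argument.)
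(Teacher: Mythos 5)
The paper does not prove Theorem \ref{thm3} at all: it is quoted as a known result of Dancer and Yan \cite{DY}, so there is no in-paper argument to compare against. Your proof is correct and is essentially the standard argument from that reference — Fourier decomposition into angular modes, identification of the regular Frobenius branch at the origin, the scaling field $\tfrac{2}{\gamma-1}w_\gamma+\boldsymbol y\cdot\nabla w_\gamma$ to kill the mode $k=0$, and the Wronskian/Sturm comparison with $w_\gamma'$ for $k\ge 2$; the sign bookkeeping in your identity $W'=\tfrac{k^2-1}{r}v_k\psi$ and the boundary values of $W$ at $0$ and on $[1,\infty)$ all check out. You were also right to read the potential in \eqref{2-3} as $\gamma(w_\gamma)_+^{\gamma-1}$ (the missing factor $\gamma$ there is a typo in the paper, as the limit equation in the proof of Lemma \ref{lem2-2} confirms).
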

	The above theorem gives a precise $2$-dimensional description for the kernel of $\mathbb L_0$. Since $\mathbb L_0$ can be regarded as the limit operator of $\mathbb L_\ep$ after scaling at $\boldsymbol z^\pm_{l,\ep}$, one can hope that the kernel of $\mathbb L_\ep$ can be approximated by a combination of rescaled $\partial_{y_1}w_\gamma$ and $\partial_{y_2}w_\gamma$. To verify this conjecture, we define the tangent mapping $A_l^\pm: (\theta,\varphi)\to (x_1,x_2)$ from $\mathbb S^2$ to $T_{\boldsymbol z_l^\pm}\mathbb S^2$ with the matrix 
	\begin{equation*}
		\mathrm{Mat}(A_l^\pm)=\left(
		\begin{array}{ccc}
			1 & 0             \\
			0 & \sin\theta_{l,\varepsilon}^\pm
		\end{array}
		\right).
	\end{equation*}
	Here, different from the tangent mapping $A$ defined before for $V^\pm_{l,\varepsilon}$, we fix $\sin\theta_{l,\varepsilon}^\pm$ with $\theta_{l,\varepsilon}^\pm$ the first coordinate of $\boldsymbol z^\pm_{l,\ep}$ to simplify the calculation for derivatives blow. Using this notation, we can define 
	\begin{equation*}
		U^\pm_{l,\varepsilon}(\boldsymbol z)=\left\{
		\begin{array}{lll}
			\frac{\kappa_l^\pm}{2\pi}\ln\frac{1}{\varepsilon}+\left(\frac{s_{l,\varepsilon}^\pm}{\ep}\right)^{\frac{\gamma-1}{2}}w_\gamma\left(\frac{A_l^\pm(\boldsymbol z-\boldsymbol z_{l,\varepsilon}^\pm)}{s_{l,\varepsilon}^\pm}\right), \ \ \ &\mathrm{if} \ |A_l^{\pm}(\boldsymbol z-\boldsymbol z^\pm_{l,\varepsilon})|\le s_{l,\varepsilon}^\pm,\\
			\frac{\kappa_l^\pm}{2\pi}\frac{|\ln\varepsilon|}{|\ln s_{l,\varepsilon}^\pm|}\ln|A_l^{\pm}(\boldsymbol z-\boldsymbol z^\pm_{l,\varepsilon})|,&\mathrm{if} \ |A_l^{\pm}(\boldsymbol z-\boldsymbol z^\pm_{l,\varepsilon})|\ge s_{l,\varepsilon}^\pm
		\end{array}
		\right.
	\end{equation*}
	near $V^\pm_{l,\varepsilon}$, and assume that the approximate kernel for $\mathbb L_\varepsilon$ is $(2j+2k)$-dimensional given by the linear combination of
	\begin{equation*}
		X_{m,\varepsilon}^+({\boldsymbol z})=\chi_m^+({\boldsymbol z})\frac{\partial U^+_{m,\varepsilon}}{\partial \theta}, 
		\quad Y_{m,\varepsilon}^+({\boldsymbol z})=\chi_m^+({\boldsymbol z})\frac{\partial U^+_{m,\varepsilon}}{\partial \varphi}, \quad 1\le m\le j,
	\end{equation*}
	\begin{equation*}
		X_{n,\varepsilon}^-({\boldsymbol z})=\chi_n^-({\boldsymbol z})\frac{\partial U^+_{n,\varepsilon}}{\partial \theta}, 
		\quad Y_{n,\varepsilon}^-({\boldsymbol z})=\chi_n^-({\boldsymbol z})\frac{\partial U^+_{n,\varepsilon}}{\partial \varphi}, \quad 1\le n\le k,
	\end{equation*}
	where
	\begin{equation*}
		\chi_l^\pm(\boldsymbol z)=\left\{
		\begin{array}{lll}
			1, \ \ \  & \mathrm{if} \ |\boldsymbol z-\boldsymbol z_{l,\varepsilon}^\pm|_{\mathbb S^2}< \varepsilon|\ln\varepsilon|,\\
			0, & \mathrm{if} \ |\boldsymbol z-\boldsymbol z_{l,\varepsilon}^\pm|_{\mathbb S^2}\ge 2\varepsilon|\ln\varepsilon|
		\end{array}
		\right.
	\end{equation*}
	are smooth truncation functions radially symmetric with respect to $\boldsymbol z_{l,\ep}^\pm$. Moreover, we assume that $\chi_i^\pm(\boldsymbol z)$ satisfy the derivative condition,
	\begin{equation*}
		|\nabla \chi_i^\pm(\boldsymbol z)|\le \frac{2}{\varepsilon|\ln\varepsilon|},\quad \mathrm{and} \quad |\nabla^2 \chi_i^\pm(\boldsymbol z)|\le \frac{2}{\varepsilon^2|\ln\varepsilon|^2}.
	\end{equation*}
	
	By projection with respect to $X_{m,\varepsilon}^+, Y_{m,\varepsilon}^+, X_{n,\varepsilon}^-, Y_{n,\varepsilon}^-$, the projection problem for \eqref{2-2} is
	\begin{equation}\label{2-4}
		\begin{cases}
			\mathbb L_\varepsilon\phi=\mathbf h(\boldsymbol z)+(-\Delta_{\mathbb S^2})\sum_{m=1}^j\left[a_m^+X_{m,\varepsilon}^++b_m^+Y_{m,\varepsilon}^+\right]\\
			\quad\quad\quad\quad\quad\ +(-\Delta_{\mathbb S^2})\sum_{n=1}^k\left[a_n^-X_{n,\varepsilon}^-+b_n^-Y_{n,\varepsilon}^-\right], \ \ &\text{in} \ \mathbb S^2,\\
			\int_{\mathbb S^2}  \phi(\boldsymbol z)(-\Delta_{\mathbb S^2})X_{l,\varepsilon}^\pm(\boldsymbol z) d\boldsymbol \sigma=0, \quad
			\int_{\mathbb S^2}  \phi(\boldsymbol z)(-\Delta_{\mathbb S^2})Y_{i,\varepsilon}^\pm(\boldsymbol z) d\boldsymbol \sigma=0,
		\end{cases}
	\end{equation}
	where $\mathbf h(\boldsymbol z)$ corresponding to the nonlinear term $N_\varepsilon(\phi_\varepsilon)$ satisfies 
	$$\mathrm{supp}\, \mathbf h(\boldsymbol z)\subset \left(\cup_{m=1}^jB_{L\varepsilon}(\boldsymbol z_{m,\varepsilon}^+)\right)\cup \left(\cup_{n=1}^kB_{L\varepsilon}(\boldsymbol z_{n,\varepsilon}^-)\right)$$
	with $L$ a large positive constant, and 
	$$\mathbf\Lambda=(a_1^+,\cdots,a_j^+,b_1^+,\cdots,b_j^+, a_1^-,\cdots,a_k^-,b_1^-,\cdots,b_k^-)$$
	is the $(2j+2k)$-dimensional projection vector determined by
	\begin{footnotesize}
	\begin{equation*}
	a_l^\pm\int_{\mathbb S^2} X_{l,\varepsilon}^\pm(-\Delta_{\mathbb S^2})X_{l,\varepsilon}^\pm d\boldsymbol \sigma=\int_{\mathbb S^2}X_{l,\varepsilon}^\pm\big[\mathbb L_\ep\phi-\mathbf h(\boldsymbol z)\big]d\boldsymbol \sigma, \quad b_l^\pm\int_{\mathbb S^2} Y_{l,\varepsilon}^\pm(-\Delta_{\mathbb S^2})Y_{l,\varepsilon}^\pm d\boldsymbol \sigma=\int_{\mathbb S^2}Y_{l,\varepsilon}^\pm\big[\mathbb L_\ep\phi-\mathbf h(\boldsymbol z)\big]d\boldsymbol \sigma.
    \end{equation*}
    \end{footnotesize}
	
	By denoting the $\|\cdot\|_*$ norm for the function $\phi$ on the sphere as  
	$$\|\phi\|_*=\sup_{\mathbb S^2} |\phi|,$$
	we can prove the following coercive estimate for $\mathbb L_\varepsilon$, which indicates that the projection made in \eqref{2-4} is reasonable.
	\begin{lemma}\label{lem2-2}
		Assume that $\mathbf h$ satisfies $\mathrm{supp}\, \mathbf h\subset \left(\cup_{m=1}^jB_{L\varepsilon}(\boldsymbol z_{m,\varepsilon}^+)\right)\cup \left(\cup_{n=1}^kB_{L\varepsilon}(\boldsymbol z_{n,\varepsilon}^-)\right),$ and $$\varepsilon^{\frac{2}{p}}\|\mathbf h\|_{W^{-1,p}\left(\left(\cup_{m=1}^jB_{L\varepsilon}(\boldsymbol z_{m,\varepsilon}^+)\right)\cup \left(\cup_{n=1}^kB_{L\varepsilon}(\boldsymbol z_{n,\varepsilon}^-)\right)\right)}<\infty$$
		for $p\in (2,+\infty]$, then there exists a small $\varepsilon_0>0$ and a positive constant $c_0$ such that for any $\varepsilon\in(0,\varepsilon_0]$ and solution pair $(\phi,\mathbf\Lambda)$ to \eqref{2-4}, it holds
		\begin{align*}
			\|\phi\|_*+\varepsilon^{1-\frac{2}{p}}\|\nabla\phi&\|_{L^p\left(\left(\cup_{m=1}^jB_{L\varepsilon}(\boldsymbol z_{m,\varepsilon}^+)\right)\cup \left(\cup_{n=1}^kB_{L\varepsilon}(\boldsymbol z_{n,\varepsilon}^-)\right)\right)}+|\mathbf\Lambda|\cdot\varepsilon^{-1}\\
			&\le c_0\varepsilon^{1-\frac{2}{p}}\|\mathbf h\|_{W^{-1,p}\left(\left(\cup_{m=1}^jB_{L\varepsilon}(\boldsymbol z_{m,\varepsilon}^+)\right)\cup \left(\cup_{n=1}^kB_{L\varepsilon}(\boldsymbol z_{n,\varepsilon}^-)\right)\right)}.
		\end{align*}
	\end{lemma}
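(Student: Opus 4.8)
The plan is to argue by contradiction in the spirit of the classical blow-up analysis for linearized concentration problems. Suppose the estimate fails; then there exist sequences $\varepsilon_\ell\to 0^+$, functions $\mathbf h_\ell$ supported in the union of the small caps with $\varepsilon_\ell^{1-2/p}\|\mathbf h_\ell\|_{W^{-1,p}}\to 0$, and solution pairs $(\phi_\ell,\mathbf\Lambda_\ell)$ to \eqref{2-4} normalized so that $\|\phi_\ell\|_*+\varepsilon_\ell^{1-2/p}\|\nabla\phi_\ell\|_{L^p}+|\mathbf\Lambda_\ell|\varepsilon_\ell^{-1}=1$. I would first dispose of the finite-dimensional part: testing the equation in \eqref{2-4} against each $X_{l,\varepsilon}^\pm$ and $Y_{l,\varepsilon}^\pm$ and using the orthogonality conditions, together with the non-degeneracy of the Gram-type quantities $\int_{\mathbb S^2}X_{l,\varepsilon}^\pm(-\Delta_{\mathbb S^2})X_{l,\varepsilon}^\pm\,d\boldsymbol\sigma \sim c\,|\ln\varepsilon|^{-1}$ (a direct computation from the profile of $U_{l,\varepsilon}^\pm$ and \eqref{2-1}), one extracts $|\mathbf\Lambda_\ell|\varepsilon_\ell^{-1}=o(1)+o(\|\phi_\ell\|_*)$. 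Hence after normalization the bulk of the mass sits in $\phi_\ell$, and $(-\Delta_{\mathbb S^2})\phi_\ell = \mathbf h_\ell + \frac{1}{\varepsilon_\ell^2}\sum (V^\pm_{l,\varepsilon_\ell}-\tfrac{\kappa_l^\pm}{2\pi}\ln\tfrac1{\varepsilon_\ell})_+^{\gamma-1}\phi_\ell + (\text{small})$.

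Next I would perform the rescaling at each vortex center. Set $\hat\phi_{l,\ell}^\pm(\boldsymbol y):=\phi_\ell(\mathcal A_{l,\varepsilon_\ell}^\pm\boldsymbol y+\boldsymbol z_{l,\varepsilon_\ell}^\pm)$ on balls of radius $\sim|\ln\varepsilon_\ell|$; the point of the tangent mapping $\mathcal A_{l,\varepsilon}^\pm=s_{l,\varepsilon}^\pm\,\mathrm{diag}(1,\sin^{-1}\theta_{l,\varepsilon}^\pm)$ is precisely that in these coordinates the Laplace--Beltrami operator converges to the flat $\Delta$ on $\mathbb R^2$ and the potential $\frac{(s_{l,\varepsilon}^\pm/\varepsilon)^{2}}{\varepsilon^2}\cdot(\cdots)_+^{\gamma-1}$ converges (using the scaling relation \eqref{2-1}) to $(w_\gamma)_+^{\gamma-1}$. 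By elliptic $W^{2,p}$ and Schauder estimates on compact subsets, $\hat\phi_{l,\ell}^\pm$ converges in $C^1_{loc}(\mathbb R^2)$ to a bounded function $\phi_{l,\infty}^\pm$ solving $\mathbb L_0\phi_{l,\infty}^\pm=0$. By Theorem \ref{thm3}, $\phi_{l,\infty}^\pm\in\mathrm{span}\{\partial_{y_1}w_\gamma,\partial_{y_2}w_\gamma\}$. The orthogonality conditions in \eqref{2-4} pass to the limit — here one must check that $\int_{\mathbb S^2}\phi_\ell(-\Delta_{\mathbb S^2})X_{l,\varepsilon_\ell}^\pm\,d\boldsymbol\sigma=0$ rescales to $\int_{\mathbb R^2}\phi_{l,\infty}^\pm(-\Delta)\partial_{y_i}w_\gamma = \int_{\mathbb R^2}\phi_{l,\infty}^\pm\cdot(\gamma-1)(w_\gamma)_+^{\gamma-2}w_\gamma'\,(\text{direction})$, which forces $\phi_{l,\infty}^\pm=0$. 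So $\phi_\ell\to 0$ in $C^1_{loc}$ near every vortex.

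Then I would upgrade this local decay to the global bound $\|\phi_\ell\|_*\to 0$, which contradicts the normalization. Write $\phi_\ell=(-\Delta_{\mathbb S^2})^{-1}\big[\mathbf h_\ell+g_\ell\big]$ with $g_\ell:=\frac{1}{\varepsilon_\ell^2}\sum(V^\pm_{l,\varepsilon_\ell}-\cdots)_+^{\gamma-1}\phi_\ell+(\text{lower order projection terms})$; the Green representation gives $\|\phi_\ell\|_{L^\infty(\mathbb S^2)}\le C\big(\varepsilon_\ell^{1-2/p}\|\mathbf h_\ell\|_{W^{-1,p}}+\|G(\boldsymbol z,\cdot)\|_{L^{p'}(\mathrm{supp}\,g_\ell)}\|g_\ell\|_{L^p}\big)$, and since $g_\ell$ is supported in the $O(\varepsilon_\ell|\ln\varepsilon_\ell|)$ caps with $\|g_\ell\|_{L^p(\text{caps})}=O(\varepsilon_\ell^{-2/p}|\ln\varepsilon_\ell|^{?}\|\hat\phi_\ell\|_{L^\infty})$ and $\|\hat\phi_\ell\|_{L^\infty(B_{|\ln\varepsilon_\ell|})}\to 0$ by the previous step (after ruling out mass escaping to the boundary of the rescaled balls via a barrier/maximum-principle comparison with the known exponential-type decay of $w_\gamma'$), one obtains $\|\phi_\ell\|_*=o(1)$. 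The main obstacle I anticipate is this last gluing step: one must control $\phi_\ell$ in the annular transition region $s_{l,\varepsilon}^\pm\lesssim|A_l^\pm(\boldsymbol z-\boldsymbol z_{l,\varepsilon}^\pm)|\lesssim\varepsilon|\ln\varepsilon|$ where the potential is zero but the cutoffs $\chi_l^\pm$ act, and ensure the $C^1_{loc}$ convergence is strong enough (uniform up to scale $|\ln\varepsilon_\ell|$) that no $L^\infty$ mass is lost — this is exactly where the weighted norm $\varepsilon^{1-2/p}\|\nabla\phi\|_{L^p}$ on the caps is needed, and where the precise choice of truncation radius and the derivative bounds on $\chi_l^\pm$ enter the estimate. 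Once the contradiction is reached, the lemma follows; moreover the argument simultaneously yields the claimed control on $\varepsilon^{1-2/p}\|\nabla\phi\|_{L^p}$ by applying interior $W^{1,p}$ estimates to the rescaled equation on each cap.
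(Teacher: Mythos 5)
Your proposal follows essentially the same route as the paper's proof: a direct testing argument against $X_{l,\varepsilon}^\pm,Y_{l,\varepsilon}^\pm$ to control $\mathbf\Lambda$, a normalization-and-blow-up contradiction argument at each vortex center that uses the Dancer--Yan nondegeneracy (Theorem \ref{thm3}) together with the passed-to-the-limit orthogonality conditions to force the limit to vanish, and a maximum-principle/Green-representation step to globalize, with interior $W^{1,p}$ estimates on the rescaled equation giving the gradient bound. Two heuristic asides are off --- the Gram quantity $\int_{\mathbb S^2}X_{l,\varepsilon}^\pm(-\Delta_{\mathbb S^2})X_{l,\varepsilon}^\pm\,d\boldsymbol\sigma$ scales like $\varepsilon^{-2}$ rather than $|\ln\varepsilon|^{-1}$, and $(-\Delta)\partial_{y_i}w_\gamma=\gamma(w_\gamma)_+^{\gamma-1}\partial_{y_i}w_\gamma$ rather than the expression you wrote --- but neither affects the validity of the strategy.
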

	\begin{proof}
		From the definition of $X_{m,\ep}^+$, we see $\mathrm{supp}\, X_{m,\ep}^+\subset B_\delta(\boldsymbol z_m^+)$. Thus, to derive the estimate for coefficient $a_m^+$, we can integrate by parts in $B_\delta(\boldsymbol z_m^+)$ to obtain
		\begin{equation}\label{2-5}
			a_m^+\int_{B_\delta(\boldsymbol z_m^+)} \left[(\partial_\theta X_{m,\varepsilon}^+)^2+\left(\frac{\partial_\varphi X_{m,\varepsilon}^+}{\sin\theta}\right)^2\right]d\boldsymbol \sigma=\int_{B_\delta(\boldsymbol z_m^+)}X_{m,\varepsilon}^+\mathbb L_\varepsilon\phi d\boldsymbol \sigma-\int_{B_\delta(\boldsymbol z_m^+)} X_{m,\varepsilon}^+\mathbf h d\boldsymbol \sigma.
		\end{equation}
		For the left hand side of \eqref{2-5}, it holds
		\begin{align*}
		&\quad a_m^+\int_{B_\delta(\boldsymbol z_m^+)} \left[(\partial_\theta X_{m,\varepsilon}^+)^2+\left(\frac{\partial_\varphi X_{m,\varepsilon}^+}{\sin\theta}\right)^2\right]d\boldsymbol \sigma\\
			&=2\Lambda\int_{B_\delta(\boldsymbol z_m^+)} \sin\theta \left[\left(\frac{\partial^2 U^+_{1,\varepsilon}}{\partial \theta^2}\right)^2+\left(\frac{\frac{\partial^2U_{1,\varepsilon}^+}{\partial_\varphi\partial_\theta}}{\sin\theta}\right)^2\right]d\theta d\varphi+\frac{o_\varepsilon(1)}{\varepsilon^2}\\
			&=\Lambda\frac{C_z}{\varepsilon^2} (1+o_\varepsilon(1)),
		\end{align*}
		where $C_z=\int_{\mathbb R^2}(\nabla \partial_{y_1}w)^2 d\boldsymbol y$ is a positive constant.

		To estimate the first term in the right-hand side of \eqref{2-5}, we have
		\begin{align*}
			&\quad\int_{B_\delta(\boldsymbol z_m^+)}X_{m,\varepsilon}^+\mathbb L_\varepsilon\phi d\boldsymbol \sigma=\int_{\Pi^+}\phi\mathbb L_\varepsilon X_{m,\varepsilon}^+ d\boldsymbol \sigma\\
			&=\int_{B_\delta(\boldsymbol z_m^+)}\phi\mathbb (-\Delta_{\mathbb S^2})X_{m,\varepsilon}^+ d\boldsymbol \sigma-\frac{\gamma}{\ep^2}\int_{B_\delta(\boldsymbol z_m^+)}\phi \left(V^+_{m,\varepsilon}(\boldsymbol z)-\frac{\kappa_m^+}{2\pi}\ln\frac{1}{\varepsilon}\right)_+^{\gamma-1}\frac{\partial U^+_{m,\varepsilon}}{\partial \theta}d\boldsymbol \sigma.
		\end{align*}
		It holds
		\begin{align*}
			&\quad\int_{B_\delta(\boldsymbol z_m^+)}\phi\mathbb (-\Delta_{\mathbb S^2})X_{m,\varepsilon}^+ d\boldsymbol \sigma=\int_{B_\delta(\boldsymbol z_m^+)}\phi\left[\partial_\theta(\sin\theta\partial_\theta X_{m,\varepsilon}^+)+\frac{\partial_\varphi^2X_{m,\varepsilon}^+}{\sin\theta}\right]d\theta d\varphi\\
			&=\int_{B_\delta(\boldsymbol z_m^+)}\phi\cos\theta\partial_\theta\left(\chi_m^+\frac{\partial U^+_{m,\varepsilon}}{\partial \theta}\right)d\theta d\varphi +\int_{B_\delta(\boldsymbol z_m^+)}\phi\sin\theta\left[(\partial_\theta^2\chi_m^+)\frac{\partial U^+_{m,\varepsilon}}{\partial \theta}+\frac{\partial_\varphi^2\chi_m^+}{\sin^2\theta}\frac{\partial U^+_{m,\varepsilon}}{\partial \theta}\right]d\theta d\varphi\\
			&\quad+\int_{B_\delta(\boldsymbol z_m^+)}\phi\sin\theta\left[\partial_\theta\chi_m^+\frac{\partial^2 U^+_{m,\varepsilon}}{\partial \theta^2}+\frac{\partial_\varphi\chi_m^+}{\sin^2\theta} \frac{\partial^2 U^+_{m,\varepsilon}}{\partial\varphi\partial \theta}\right]d\theta d\varphi\\
			&\quad+\int_{B_\delta(\boldsymbol z_m^+)}\phi\chi_m^+\sin\theta\left[\frac{1}{\sin^2\theta}-\frac{1}{\sin^2\theta_0}\right] \frac{\partial^2 U^+_{m,\varepsilon}}{\partial\varphi^2\partial \theta}d\theta d\varphi+\int_{B_\delta(\boldsymbol z_m^+)}\phi\chi_1^+\sin\theta\left[\frac{\partial^3 U^+_{m,\varepsilon}}{\partial \theta^3}+\frac{\frac{\partial^3 U^+_{m,\varepsilon}}{\partial \varphi^2\partial \theta}}{\sin^2\theta_0}\right]d\theta d\varphi\\
			&=I_1+I_2+I_3+I_4+I_5.
		\end{align*}
		According to the property for truncation function $\chi_1^+$, we have $I_1\le C|\ln\varepsilon|\|\phi\|_*$, $I_2\le \frac{C\|\phi\|_*}{\varepsilon|\ln\varepsilon|}$, $I_3\le \frac{C\|\phi\|_*}{\varepsilon|\ln\varepsilon|}$, and $I_4\le C\|\phi\|_*$. For the last term $I_5$, by applying the non-degenerate property in Theorem \ref{thm3}, we have
		\begin{align*}
			I_5&=\frac{\gamma}{\ep^2}\int_{B_\delta(\boldsymbol z_m^+)}\phi \left(U^+_{m,\varepsilon}(\boldsymbol z)-\frac{\kappa_m^+}{2\pi}\ln\frac{1}{\varepsilon}\right)_+^{\gamma-1}\frac{\partial U^+_{m,\varepsilon}}{\partial \theta}d\boldsymbol \sigma\\
			&=\frac{\gamma}{\ep^2}\int_{B_\delta(\boldsymbol z_m^+)}\phi \left(V^+_{m,\varepsilon}(\boldsymbol z)-\frac{\kappa_m^+}{2\pi}\ln\frac{1}{\varepsilon}\right)_+^{\gamma-1}\frac{\partial U^+_{m,\varepsilon}}{\partial \theta}d\boldsymbol \sigma+O_\varepsilon(1)\|\phi\|_*.
		\end{align*} 
		Thus we have
		\begin{equation*}
			\int_{B_\delta(\boldsymbol z_m^+)}X_{m,\varepsilon}^+\mathbb L_\varepsilon\phi d\boldsymbol \sigma\le \frac{C}{\varepsilon|\ln\varepsilon|}\|\phi\|_*.
		\end{equation*}
		For the last term in \eqref{2-5}, we can apply the Poincar\'e inequality to derive 
		\begin{align*}
			\int_{B_\delta(\boldsymbol z_m^+)}X_{m,\varepsilon}^+ \mathbf h d\boldsymbol \sigma&\le \|\mathbf h\|_{W^{-1,p}\left(B_{L\varepsilon}(\boldsymbol z_{m,\varepsilon}^+)\right)}\|\nabla X_{m,\varepsilon}^+\|_{L^{p'}\left(B_{L\varepsilon}(\boldsymbol z_{m,\varepsilon}^+)\right)}\\
			&\le C\varepsilon^{\frac{2}{p'}-2} \|\mathbf h\|_{W^{-1,p}\left(B_{L\varepsilon}(\boldsymbol z_{m,\varepsilon}^+)\right)}.
		\end{align*}
		Concluding all the above estimates together, we have
		\begin{equation*}
			|a_m^+|\varepsilon^{-1}\le\frac{C}{|\ln\varepsilon|}\|\phi\|_*+\varepsilon^{\frac{2}{p'}-1}\|\mathbf h\|_{W^{-1,p}\left(B_{L\varepsilon}(\boldsymbol z_{m,\varepsilon}^+)\right)}.
		\end{equation*}
		Similarly, we can deal with $Y_{m,\ep}^+$, $X_{n,\ep}^-$ and $Y_{n,\ep}^-$ to obtain
		\begin{equation*}
			|\mathbf \Lambda|\varepsilon^{-1}\le\frac{C}{|\ln\varepsilon|}\|\phi\|_*+\varepsilon^{\frac{2}{p'}-1}\|\mathbf h\|_{W^{-1,p}\left(\left(\cup_{m=1}^jB_{L\varepsilon}(\boldsymbol z_{m,\varepsilon}^+)\right)\cup \left(\cup_{n=1}^kB_{L\varepsilon}(\boldsymbol z_{n,\varepsilon}^-)\right)\right)}.
		\end{equation*}
		Since 
		\begin{equation*}
			\|(-\Delta_{\mathbb S^2}) X_{l,\varepsilon}^\pm\|_{W^{-1,p}(B_{L\varepsilon}(\boldsymbol z_l^\pm)}\le C\|\nabla X_{m,\varepsilon}^+\|_{L^{p}(B_{L\varepsilon}(\boldsymbol z_j^\pm)}=C \varepsilon^{\frac{2}{p}-2},
		\end{equation*} 
		and 
		\begin{equation*}
			\|(-\Delta_{\mathbb S^2})Y_{l,\varepsilon}^\pm\|_{W^{-1,p}(B_{L\varepsilon}(\boldsymbol z_1^+)}\le C\|\nabla Y_{l,\varepsilon}^\pm\|_{L^{p}(B_{L\varepsilon}(\boldsymbol z_l^\pm)}=C \varepsilon^{\frac{2}{p}-2},
		\end{equation*} 
		we have
		\begin{align*}
			&\quad\left\|\sum_{m=1}^j\left[a_m^+(-\Delta_{\mathbb S^2}) X_{m,\varepsilon}^++b_m^+(-\Delta_{\mathbb S^2})Y_{m,\varepsilon}^+\right]\right\|_{W^{-1,p}\left(\left(\cup_{m=1}^jB_{L\varepsilon}(\boldsymbol z_{m,\varepsilon}^+)\right)\right)}\\
			&\quad+\left\|\sum_{n=1}^k\left[a_n^-(-\Delta_{\mathbb S^2}) X_{n,\varepsilon}^-+b_n^-(-\Delta_{\mathbb S^2})Y_{n,\varepsilon}^-\right]\right\|_{W^{-1,p}\left(\left(\cup_{n=1}^kB_{L\varepsilon}(\boldsymbol z_{n,\varepsilon}^-)\right)\right)}\\
			&\le C \frac{\varepsilon^{\frac{2}{p}-1}}{|\ln\varepsilon|}\|\phi\|_*+C\|\mathbf h\|_{W^{-1,p}\left(\left(\cup_{m=1}^jB_{L\varepsilon}(\boldsymbol z_{m,\varepsilon}^+)\right)\cup \left(\cup_{n=1}^kB_{L\varepsilon}(\boldsymbol z_{n,\varepsilon}^-)\right)\right)}.
		\end{align*}
		
		To obtain the estimate for $\|\phi\|_*$ and $\varepsilon^{1-\frac{2}{p}}\|\nabla\phi\|_{L^p\left(\left(\cup_{m=1}^jB_{L\varepsilon}(\boldsymbol z_{m,\varepsilon}^+)\right)\cup \left(\cup_{n=1}^kB_{L\varepsilon}(\boldsymbol z_{n,\varepsilon}^-)\right)\right)}$ in the next step, we argue by contradiction. Suppose not. Then there exists a sequence $\{\varepsilon_{\mathbf n}\}$ tending to $0$, such that
		\begin{equation}\label{2-6}
			\|\phi_{\mathbf n}\|_*+\varepsilon_{\mathbf n}^{1-\frac{2}{p}}\|\nabla\phi_{\mathbf n}\|_{L^p\left(\left(\cup_{m=1}^jB_{L\varepsilon_{\mathbf n}}(\boldsymbol z_{m,\varepsilon_{\mathbf n}}^+)\right)\cup \left(\cup_{n=1}^kB_{L\varepsilon_{\mathbf n}}(\boldsymbol z_{n,\varepsilon_{\mathbf n}}^-)\right)\right)}=1,
		\end{equation} 
		and
		\begin{equation*}
			\varepsilon_{\mathbf n}^{\frac{2}{p'}-2} \|\mathbf h\|_{W^{-1,p}\left(\left(\cup_{m=1}^jB_{L\varepsilon_{\mathbf n}}(\boldsymbol z_{m,\varepsilon_{\mathbf n}}^+)\right)\cup \left(\cup_{n=1}^kB_{L\varepsilon_{\mathbf n}}(\boldsymbol z_{n,\varepsilon_{\mathbf n}}^-)\right)\right)}\le\frac{1}{\mathbf n}.
		\end{equation*}
		The solution $\phi_{\mathbf n}$ satisfies the equation
		\begin{align*}
			&(-\Delta_{\mathbb S^2})\phi_{\mathbf n}=-\frac{\gamma}{\ep^2_{\mathbf n}}\sum\limits_{m=1}^j \left(V^+_{m,\varepsilon_{\mathbf n}}(\boldsymbol z)-\frac{\kappa_m^+}{2\pi}\ln\frac{1}{\varepsilon_{\mathbf n}}\right)_+^{\gamma-1}\phi_{\mathbf n}-\frac{\gamma}{\ep^2_{\mathbf n}}\sum\limits_{n=1}^k\left(V^-_{n,\varepsilon_{\mathbf n}}(\boldsymbol z)-\frac{\kappa_n^-}{2\pi}\ln\frac{1}{\varepsilon_{\mathbf n}}\right)_+^{\gamma-1}\phi_{\mathbf n}\\
			&+\mathbf h+\sum_{m=1}^j\left[a_m^+(-\Delta_{\mathbb S^2}) X_{m,\varepsilon_{\mathbf n}}^++b_m^+(-\Delta_{\mathbb S^2})Y_{m,\varepsilon_{\mathbf n}}^+\right]+\sum_{n=1}^k\left[a_n^-(-\Delta_{\mathbb S^2}) X_{n,\varepsilon_{\mathbf n}}^-+b_n^-(-\Delta_{\mathbb S^2})Y_{n,\varepsilon_{\mathbf n}}^-\right].
		\end{align*}
		By letting
		\begin{align*}
			&g(\boldsymbol z)=-\frac{\gamma}{\ep^2_{\mathbf n}}\sum\limits_{m\neq l}^j \left(V^+_{m,\varepsilon_{\mathbf n}}(\boldsymbol z)-\frac{\kappa_m^+}{2\pi}\ln\frac{1}{\varepsilon_{\mathbf n}}\right)_+^{\gamma-1}\phi_{\mathbf n}-\frac{\gamma}{\ep^2_{\mathbf n}}\sum\limits_{n=1}^k\left(V^-_{n,\varepsilon_{\mathbf n}}(\boldsymbol z)-\frac{\kappa_n^-}{2\pi}\ln\frac{1}{\varepsilon_{\mathbf n}}\right)_+^{\gamma-1}\phi_{\mathbf n}\\
			&+\mathbf h+\sum_{m=1}^j\left[a_m^+(-\Delta_{\mathbb S^2}) X_{m,\varepsilon_{\mathbf n}}^++b_m^+(-\Delta_{\mathbb S^2})Y_{m,\varepsilon_{\mathbf n}}^+\right]+\sum_{n=1}^k\left[a_n^-(-\Delta_{\mathbb S^2}) X_{n,\varepsilon_{\mathbf n}}^-+b_n^-(-\Delta_{\mathbb S^2})Y_{n,\varepsilon_{\mathbf n}}^-\right].
		\end{align*}
		and 
		$$\tilde v(\boldsymbol y)=v(s_{\varepsilon_{\mathbf n}}^+y_1+\theta_{l,\ep_{\mathbf n}}^+,s_{\varepsilon_{\mathbf n}^+}\sin^{-1}(s_{\varepsilon_{\mathbf n}}^+y_1+\theta_0)y_2+\varphi_{l,\ep_{\mathbf n}}^+)$$
		for a general function $v$, we see that $\tilde\phi_n$ satisfies the scaled equation near $\boldsymbol z_{l,\ep_{\mathbf n}}^+$
		\begin{equation}\label{2-7}
			\begin{split}
			&\quad\int_{D_{\mathbf n}}\left[\frac{\partial \tilde\phi_{\mathbf n}}{\partial y_1}\frac{\partial \zeta}{\partial y_1}+\frac{\partial \tilde\phi_{\mathbf n}}{\partial y_2}\frac{\partial \zeta}{\partial y_2}\right]d\boldsymbol y\\
			&=\gamma\int_{B_1(\boldsymbol 0)}(w_\gamma)_+^{\gamma-1}\tilde\phi_{\mathbf n}\zeta d\xi+\langle \tilde g_{\mathbf n}, \zeta\rangle, \quad \forall \, \zeta\in C_0^\infty(D_{\mathbf n}),
			\end{split}
		\end{equation}
		where
		$$D_n:=\{\boldsymbol y\mid (s_{\varepsilon_{\mathbf n}}^+y_1+\theta_{l,\ep_{\mathbf n}}^+,s_{\varepsilon_{\mathbf n}^+}\sin^{-1}(s_{\varepsilon_{\mathbf n}}^+y_1+\theta_0)y_2+\varphi_{l,\ep_{\mathbf n}}^+)\in B_\delta(\boldsymbol z_{l,\ep_{\mathbf n}}^+)\}.$$
		Since 
		\begin{equation*}
			\|\tilde g_{\mathbf n}\|_{W^{-1,p}(B_L(\boldsymbol 0))}\le C\varepsilon_{\mathbf n}^{1-\frac{2}{p}}\left( \frac{\varepsilon_{\mathbf n}^{\frac{2}{p}-1}}{|\ln\varepsilon_{\mathbf n}|}\|\phi\|_*+\|\mathbf h\|_{W^{-1,p}\left(\left(\cup_{m=1}^jB_{L\varepsilon_{\mathbf n}}(\boldsymbol z_{m,\varepsilon_{\mathbf n}}^+)\right)\cup \left(\cup_{n=1}^kB_{L\varepsilon_{\mathbf n}}(\boldsymbol z_{n,\varepsilon_{\mathbf n}}^-)\right)\right)}\right)=o_{\mathbf n}(1)
		\end{equation*}
		by estimate \eqref{2-7} and the boundedness of $\gamma(w_\gamma)_+^{\gamma-1}$, $\tilde\phi_{\mathbf n}$ is bounded in $W^{1,p}_{\mathrm{loc}}(\mathbb R^2)$ by regularity theory of elliptic operators, and bounded in $C^{\alpha}_{\mathrm{loc}}(\mathbb R^2)$ for some $\alpha>0$ by Sobolev embedding. Hence we can assume $\tilde\phi_{\mathbf n}$ converge uniformly to $\phi^*\in L^\infty(\mathbb R^2)\cap C(\mathbb R^2)$ in any fixed compact set in $\mathbb R^2$, which satisfies the limit equation
		\begin{equation*}
			-\Delta\phi^*=\gamma(w_\gamma)_+^{\gamma-1}\phi^*, \quad \mathrm{in} \ \mathbb R^2.
		\end{equation*}
		Owing to Theorem \ref{thm3}, it holds
		\begin{equation*}
			\phi^*=C_1\frac{\partial w}{\partial y_1}+C_2\frac{\partial w}{\partial y_2}.
		\end{equation*}
		However, from the orthogonal condition in \eqref{2-4} we know that $\int_{\mathbb R^2}\nabla\phi^*\nabla \partial_{y_1} wd\boldsymbol y=0$ and $\int_{\mathbb R^2}\nabla\phi^*\nabla \partial_{y_2} wd\boldsymbol y=0$. Thus $C_1=C_2=0$ and $\|\phi_{\mathbf n}\|_{L^\infty(B_{L_{\mathbf n}}(\boldsymbol z_{l,\ep_{\mathbf n}}^+))}=o_{\mathbf n}(1)$. Then we can deal with other vortices by the same method to obtain
		\begin{equation*}
			\|\phi_{\mathbf n}\|_{L^\infty(B_{L_{\mathbf n}}\left(\left(\cup_{m=1}^jB_{L\varepsilon_{\mathbf n}}(\boldsymbol z_{m,\varepsilon_{\mathbf n}}^+)\right)\cup \left(\cup_{n=1}^kB_{L\varepsilon_{\mathbf n}}(\boldsymbol z_{n,\varepsilon_{\mathbf n}}^-)\right)\right)}=o_{\mathbf n}(1).
		\end{equation*}
		Then, because of the maximum principle, we have
		\begin{equation}\label{2-8}
			\|\phi_{\mathbf n}\|_*=o_{\mathbf n}(1).
		\end{equation}
		
		On the other hand, the right-hand side of equation \eqref{2-7} can be controlled by 
		$$o_{\mathbf n}(1)\|\zeta\|_{W^{1,1}(B_L(\boldsymbol 0))}+o_{\mathbf n}(1)\|\zeta\|_{W^{1,p'}(B_L(\boldsymbol 0))}=o_{\mathbf n}(1)\left(\int_{B_L(\boldsymbol 0)}|\nabla\zeta|^{p'}\right)^{\frac{1}{p'}}d\boldsymbol y.$$
		Consequently, we deduce that $\varepsilon_{\mathbf n}^{1-\frac{2}{p}}\|\nabla\phi_{\mathbf n}\|_{L^p(B_{L_{\mathbf n}}(\boldsymbol z_{l,\ep_{\mathbf n}}^+))}\le C||\nabla\tilde\phi_{\mathbf n}||_{L^p(B_L(\boldsymbol 0))}=o_{\mathbf n}(1),$ and
		\begin{equation*}
			\varepsilon_{\mathbf n}^{1-\frac{2}{p}}\|\nabla\phi_{\mathbf n}\|_{L^p(B_{L_{\mathbf n}}\left(\left(\cup_{m=1}^jB_{L\varepsilon_{\mathbf n}}(\boldsymbol z_{m,\varepsilon_{\mathbf n}}^+)\right)\cup \left(\cup_{n=1}^kB_{L\varepsilon_{\mathbf n}}(\boldsymbol z_{n,\varepsilon_{\mathbf n}}^-)\right)\right)}=o_{\mathbf n}(1),
		\end{equation*}
		which result in a contradiction to \eqref{2-6} together with \eqref{2-8}. Combining this fact with the estimate for $\mathbf \Lambda$, we then complete the proof of Lemma \ref{lem2-2}.
	\end{proof}
	
	\bigskip
	
	By the coercive estimate in Lemma \ref{lem2-2}, the projection problem \eqref{2-4} is solvable according to the following lemma.
	\begin{lemma}\label{lem2-3}
		Suppose that $\mathrm{supp}\, \mathbf h\subset \left(\cup_{m=1}^jB_{L\varepsilon}(\boldsymbol z_{m,\varepsilon}^+)\right)\cup \left(\cup_{n=1}^kB_{L\varepsilon}(\boldsymbol z_{n,\varepsilon}^-)\right)$ and 
		$$\varepsilon^{1-\frac{2}{p}}\|\mathbf h\|_{W^{-1,p}\left(\left(\cup_{m=1}^jB_{L\varepsilon}(\boldsymbol z_{m,\varepsilon}^+)\right)\cup \left(\cup_{n=1}^kB_{L\varepsilon}(\boldsymbol z_{n,\varepsilon}^-)\right)\right)}<\infty$$
		for $p\in(2,+\infty]$. Then there exists a small $\varepsilon_0>0$ such that for any $\varepsilon\in(0,\varepsilon_0]$ and $\mathbf\Lambda$ the projection vector, \eqref{2-4} has a unique solution $\phi_\varepsilon=\mathcal T_\varepsilon \, \mathbf h$, where $\mathcal T_\varepsilon$ is a linear operator. Moreover, there exists a constant $c_0>0$ independent of $\varepsilon$, such that
		\begin{equation}\label{2-9}
			\begin{split}
			\|\phi_\varepsilon\|_*+\varepsilon^{1-\frac{2}{p}}&\|\nabla\phi_\varepsilon\|_{L^p(\left(\cup_{m=1}^jB_{L\varepsilon}(\boldsymbol z_{m,\varepsilon}^+)\right)\cup \left(\cup_{n=1}^kB_{L\varepsilon}(\boldsymbol z_{n,\varepsilon}^-)\right))}\\
			&\le c_0\varepsilon^{1-\frac{2}{p}}\|\mathbf h\|_{W^{-1,p}(\left(\cup_{m=1}^jB_{L\varepsilon}(\boldsymbol z_{m,\varepsilon}^+)\right)\cup \left(\cup_{n=1}^kB_{L\varepsilon}(\boldsymbol z_{n,\varepsilon}^-)\right))}.
			\end{split}
		\end{equation}
	\end{lemma}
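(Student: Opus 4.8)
The plan is to recast \eqref{2-4} as an abstract equation of the form ``identity minus compact operator'' on a Hilbert space, and then to invoke the Fredholm alternative together with the a priori bound of Lemma \ref{lem2-2}.

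\emph{Variational reformulation.} Let $H\subset H^1(\mathbb S^2)$ be the closed subspace of functions $\phi$ of zero average satisfying
\[
\int_{\mathbb S^2}\nabla_{\mathbb S^2}\phi\cdot\nabla_{\mathbb S^2}X_{l,\varepsilon}^\pm\,d\boldsymbol\sigma=\int_{\mathbb S^2}\nabla_{\mathbb S^2}\phi\cdot\nabla_{\mathbb S^2}Y_{l,\varepsilon}^\pm\,d\boldsymbol\sigma=0
\]
for every index $l$ and both signs (these are exactly the orthogonality conditions of \eqref{2-4} after integration by parts), equipped with the Dirichlet inner product $a(\phi,\zeta)=\int_{\mathbb S^2}\nabla_{\mathbb S^2}\phi\cdot\nabla_{\mathbb S^2}\zeta\,d\boldsymbol\sigma$. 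Write $\mathbb L_\varepsilon\phi=(-\Delta_{\mathbb S^2})\phi-q_\varepsilon\phi$, where $q_\varepsilon\ge0$ is the zeroth-order potential in the definition of $\mathbb L_\varepsilon$; at each \emph{fixed} $\varepsilon\in(0,\varepsilon_0]$ it is bounded and supported in $\cup_l B_{s_{l,\varepsilon}^\pm}(\boldsymbol z_{l,\varepsilon}^\pm)$. Since $p>2$ and the sets $B_{L\varepsilon}$ are bounded, $\mathbf h\in W^{-1,p}\hookrightarrow H^{-1}(\mathbb S^2)$, so $\zeta\mapsto\langle\mathbf h,\zeta\rangle$ is a bounded functional on $H$. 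Pairing the first line of \eqref{2-4} with $\zeta\in H$ kills the span of $(-\Delta_{\mathbb S^2})X_{l,\varepsilon}^\pm$ and $(-\Delta_{\mathbb S^2})Y_{l,\varepsilon}^\pm$, so solving \eqref{2-4} is equivalent to finding $\phi\in H$ with
\[
a(\phi,\zeta)=\int_{\mathbb S^2}q_\varepsilon\,\phi\,\zeta\,d\boldsymbol\sigma+\langle\mathbf h,\zeta\rangle\qquad\text{for all }\zeta\in H.
\]
Indeed, a solution of this identity has residual $\mathbb L_\varepsilon\phi-\mathbf h$ in the $a$-annihilator of $H$, which is $\mathrm{span}\{(-\Delta_{\mathbb S^2})X_{l,\varepsilon}^\pm,(-\Delta_{\mathbb S^2})Y_{l,\varepsilon}^\pm\}$; matching coefficients (one $2\times2$ block per vortex, invertible for small $\varepsilon$ since by disjointness of supports the inter-vortex couplings vanish and, within a vortex, $a(X_{l,\varepsilon}^\pm,Y_{l,\varepsilon}^\pm)=o(\varepsilon^{-2})$ because $\partial_{y_1}w_\gamma,\partial_{y_2}w_\gamma$ have orthogonal gradients, while $a(X_{l,\varepsilon}^\pm,X_{l,\varepsilon}^\pm)\sim C_z\varepsilon^{-2}$ as in the proof of Lemma \ref{lem2-2}) recovers the projection vector $\mathbf\Lambda$, to leading order by the formulas displayed before the statement.

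\emph{Reduction and Fredholm alternative.} By the Riesz representation theorem there are a bounded self-adjoint operator $\mathcal K_\varepsilon:H\to H$ and an element $\mathbf h^{*}\in H$ with $a(\mathcal K_\varepsilon\phi,\zeta)=\int_{\mathbb S^2}q_\varepsilon\phi\zeta\,d\boldsymbol\sigma$ and $a(\mathbf h^{*},\zeta)=\langle\mathbf h,\zeta\rangle$ for all $\zeta\in H$, so the variational identity reads $(\mathrm{Id}-\mathcal K_\varepsilon)\phi=\mathbf h^{*}$ in $H$. At fixed $\varepsilon$ the potential $q_\varepsilon$ is bounded with compact support and $H^1(\mathbb S^2)\hookrightarrow L^2(\mathbb S^2)$ compactly, so $\mathcal K_\varepsilon$ is compact and $\mathrm{Id}-\mathcal K_\varepsilon$ is Fredholm of index zero. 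If $(\mathrm{Id}-\mathcal K_\varepsilon)\phi=0$, then $\phi\in H$ solves \eqref{2-4} with $\mathbf h\equiv0$, so Lemma \ref{lem2-2} forces $\|\phi\|_{*}\le0$, i.e.\ $\phi\equiv0$; hence $\mathrm{Id}-\mathcal K_\varepsilon$ is injective, and being Fredholm of index zero it is an isomorphism of $H$. Setting $\mathcal T_\varepsilon\mathbf h:=(\mathrm{Id}-\mathcal K_\varepsilon)^{-1}\mathbf h^{*}$ yields the unique solution $\phi_\varepsilon$ of \eqref{2-4}; $\mathcal T_\varepsilon$ is linear because $\mathbf h\mapsto\mathbf h^{*}$ is, and the bound \eqref{2-9} is precisely the conclusion of Lemma \ref{lem2-2} applied to $\phi_\varepsilon$, with the same $c_0$ and (if necessary, further shrunk) $\varepsilon_0$.

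\emph{Expected difficulty.} The Riesz reduction, the compactness of $\mathcal K_\varepsilon$ at fixed $\varepsilon$, and the passage from injectivity to surjectivity are all standard; the one point that requires genuine care is the equivalence in the first step — verifying that the residual is exactly the asserted linear combination and that the block system for $\mathbf\Lambda$ is invertible — which is where the disjointness of the supports of the cutoffs $\chi_l^\pm$ and the reflection symmetry of $\partial_{y_i}w_\gamma$ are used.
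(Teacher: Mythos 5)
Your proposal is correct and follows essentially the same route as the paper: both recast \eqref{2-4} as an equation of the form $(\mathrm{Id}-\mathscr K)\phi=\text{data}$ with $\mathscr K$ compact (the paper via the integral operator $(-\Delta_{\mathbb S^2})^{-1}P_\varepsilon$ acting on the space $E_\varepsilon$, you via Riesz representation in the Dirichlet inner product), then invoke the Fredholm alternative with injectivity supplied by the coercive estimate of Lemma \ref{lem2-2}, from which \eqref{2-9} also follows. Your extra care about recovering $\mathbf\Lambda$ from the residual and the near-diagonality of the Gram matrix of $X_{l,\varepsilon}^\pm,Y_{l,\varepsilon}^\pm$ is a welcome refinement of a point the paper leaves implicit.
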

	\begin{proof}
		Let $H(\mathbb S^2)$ be the Hilbert space endowed with the inner product
		\begin{equation*}
			[u,v]_{H(\mathbb{S}^2)}=\int_{\mathbb S^2}\left[\frac{\partial u}{\partial\theta}\frac{\partial v}{\partial \theta}+\frac{1}{\sin^2\theta}\frac{\partial u}{\partial\varphi}\frac{\partial v}{\partial \varphi}\right]d\boldsymbol\sigma.
		\end{equation*}
		We also introduce two spaces as follows. The first one is
		\begin{equation*}
			E_\varepsilon=\left\{u\in H(\mathbb S^2)\,\,\, \big|\, \,\, \|u\|_*<\infty, \ [u,X_{l,\varepsilon}^\pm]_{H(\mathbb S^2)}=0, \ [u,Y_{l,\varepsilon}^\pm]_{H(\mathbb S^2)}=0\right\}
		\end{equation*}
		with norm $||\cdot||_*$, and the second one is
		\begin{equation*}
			F_\varepsilon=\left\{\mathbf h^* \in W^{-1,p}(\left(\cup_{m=1}^jB_{L\varepsilon}(\boldsymbol z_{m,\varepsilon}^+)\right)\cup \left(\cup_{n=1}^kB_{L\varepsilon}(\boldsymbol z_{n,\varepsilon}^-)\right)) \, \big| \, \int_{\mathbb S^2}X_{l,\varepsilon}^\pm\mathbf h^*d\boldsymbol \sigma=0, \ \int_{\mathbb S^2}Y_{l,\varepsilon}^\pm\mathbf h^*d\boldsymbol \sigma=0\right\}
		\end{equation*}
		with $p\in (2,+\infty]$. Then for $\phi_\varepsilon\in E_\varepsilon$, the problem \eqref{2-6} has an equivalent operator form.
		\begin{equation*}
			\begin{split}
				\phi_\varepsilon&=(-\Delta_{\mathbb S^2})^{-1}P_\varepsilon\left(-\frac{1}{\ep^2}\sum\limits_{m=1}^j \left(V^+_{m,\varepsilon}-\frac{\kappa_m^+}{2\pi}\ln\frac{1}{\varepsilon}\right)_+^{\gamma-1}\phi_\varepsilon-\frac{1}{\ep^2}\sum\limits_{n=1}^k\left(V^-_{n,\varepsilon}-\frac{\kappa_n^-}{2\pi}\ln\frac{1}{\varepsilon}\right)_+^{\gamma-1}\phi_\varepsilon\right)\\
				&\quad+(-\Delta_{\mathbb S^2})^{-1} P_\varepsilon \mathbf h\\
				&=\mathscr K\phi_\varepsilon+(-\Delta_{\mathbb S^2})^{-1} P_\varepsilon \mathbf h,
			\end{split}
		\end{equation*}
		where
		\begin{equation*}
			(-\Delta_{\mathbb S^2})^{-1}u:=\int_{\mathbb S^2}G(\boldsymbol z,\boldsymbol z')u(\boldsymbol z')d\boldsymbol \sigma',
		\end{equation*}
		and $P_\varepsilon$ is the projection operator to $F_\varepsilon$. Since $X_{l,\varepsilon}^\pm$ and $Y_{l,\varepsilon}^\pm$ have a compact support owing 
		to the truncation $\chi_l^\pm(\boldsymbol z)$, by the definition of $G(\boldsymbol x,\boldsymbol x')$, 
		we see that $\mathscr K$ maps $E_\varepsilon$ to $E_\varepsilon$.
		
		Notice that $\mathscr K$ is a compact operator. Because of the Fredholm alternative, \eqref{2-4} has a unique solution if the homogeneous equation
		\begin{equation*}
			\phi_\varepsilon=\mathscr K\phi_\varepsilon
		\end{equation*}
		has only trivial solution in $E_\varepsilon$, which can be obtained from Lemma \ref{lem2-2}. Now we can let
		$$\mathcal T_\varepsilon:=(\text{Id}-\mathscr K)^{-1}(-\Delta_{\mathbb S^2})^{-1} P_\varepsilon,$$
		and the estimate \eqref{2-9} holds by Lemma \ref{lem2-2}. The proof is thus complete.
	\end{proof}
	
	\subsection{The reduction}
	
	Now we can apply the contraction mapping theorem to obtain a solution $\phi_\ep$ to \eqref{2-4} with $\mathbf h(\boldsymbol z)=N_\varepsilon(\phi_\varepsilon)$. For this purpose, we study the nonlinear term $N_\varepsilon(\phi_\varepsilon)$, which needs an estimate for the vorticity sets $\Omega_{m,\varepsilon}^+$ and $\Omega_{n,\varepsilon}^-$. 
	
	By denoting  
	$$\tilde v_m^+(\boldsymbol y)=v(s_{m,\varepsilon}^+ y_1+\theta_{m,\varepsilon}^+,s_{m,\varepsilon}^+\sin^{-1}(s_{m,\varepsilon}^+ y_1+\theta_{m,\ep}^+)y_2+\varphi_{m,\ep}^+),$$
	and 
	$$\tilde v_n^-(\boldsymbol y)=v(s_{n,\varepsilon}^- y_1+\theta_{n,\ep}^-,s_{n,\varepsilon}^-\sin^{-1}(s_{n,\varepsilon}^- y_1+\theta_{n,\ep}^-)y_2+\varphi_{n,\ep}^-)$$
	for a general function $v$, we have the following lemma concerning the estimate for level sets 
	$$\{\boldsymbol z\in B_\delta(\boldsymbol z_{m}^+)\mid \psi_\varepsilon(\boldsymbol z)+W\sin\theta_0=\mu_{m,\varepsilon}^+\} \quad \mathrm{and} \quad \{\boldsymbol z\in B_\delta(\boldsymbol z_{n}^-)\mid -\psi_\varepsilon(\boldsymbol z)-W\sin\theta_0=\mu_{n,\varepsilon}^-\}$$
	as the perturbations of $B_{s_{m,\ep}^+}(\boldsymbol z_{m,\ep}^+)$ and $B_{s_{n,\ep}^-}(\boldsymbol z_{m,\ep}^-)$.
	\begin{lemma}\label{lem2-4}
		Suppose that $\phi$ is a function satisfying
		\begin{equation}\label{2-10}
			\|\phi_\varepsilon\|_*+\varepsilon^{1-\frac{2}{p}}\|\nabla\phi_\varepsilon\|_{L^p\left(\left(\cup_{m=1}^jB_{L\varepsilon}(\boldsymbol z_{m,\varepsilon}^+)\right)\cup \left(\cup_{n=1}^kB_{L\varepsilon}(\boldsymbol z_{n,\varepsilon}^-)\right)\right)}= O(\varepsilon)
		\end{equation}
		with $p\in (2,+\infty]$. Then the sets
		$$\tilde{\mathbf\Gamma}_{m,\phi}^+:=\{\boldsymbol y \mid \tilde\Psi_{m,\varepsilon}^+(\boldsymbol y)+\tilde\phi_m^++W\cos(s_{m,\varepsilon }^+y_1+\theta_{m,\varepsilon}^+)=\mu_{m,\varepsilon}^+\}$$
		and
		$$\tilde{\mathbf\Gamma}_{n,\phi}^-:=\{\boldsymbol y \mid -\tilde\Psi_{n,\varepsilon}^-(\boldsymbol y)-\tilde\phi_n^--W\cos(s_{n,\varepsilon }^-y_1+\theta_{n,\varepsilon}^-)=\mu_{n,\varepsilon}^-\}$$
		are $C^1$ closed curves in $\mathbb{R}^2$, and
		\begin{equation}\label{2-11}
			\begin{split}
				\tilde{\mathbf\Gamma}_{m,\phi}^+(\xi)&=(1+t_\varepsilon(\xi))(\cos\xi,\sin\xi)\\
				&=(1+t_{\varepsilon,\phi}(\xi)+t_{\varepsilon,\mathcal F_m^+}(\xi)+O(\varepsilon^2))(\cos\xi,\sin\xi)\\
				&=\left(1+\frac{1}{s_{m,\varepsilon}^+\beta_{m,\varepsilon}^+}\tilde\phi(\cos\xi,\sin\xi)\right)(\cos\xi,\sin\xi)+\frac{\mathcal F^+_m(\xi)}{s_{m,\varepsilon}^+\beta_{m,\varepsilon}^+}(\cos\theta,0)\\
				&\quad+O(\varepsilon^2), \quad \xi\in (0,2\pi],
			\end{split}
		\end{equation}
			\begin{equation}\label{2-12}
			\begin{split}
				\tilde{\mathbf\Gamma}_{n,\phi}^-(\xi)&=(1+t_\varepsilon(\xi))(\cos\xi,\sin\xi)\\
				&=(1+t_{\varepsilon,\phi}(\xi)+t_{\varepsilon,\mathcal F_m^-}(\xi)+O(\varepsilon^2))(\cos\xi,\sin\xi)\\
				&=\left(1+\frac{1}{s_{n,\varepsilon}^-\beta_{n,\varepsilon}^-}\tilde\phi(\cos\xi,\sin\xi)\right)(\cos\xi,\sin\xi)+\frac{\mathcal F^-_n(\xi)}{s_{n,\varepsilon}^-\beta_{n,\varepsilon}^-}(\cos\theta,0)\\
				&\quad+O(\varepsilon^2), \quad \xi\in (0,2\pi]
			\end{split}
		\end{equation}
		with $\|t_\varepsilon\|_{C^1[0,2\pi)}=O(\ep)$, and 
		\begin{align*}
			\mathcal F_m^+(\xi)&=\bigg\langle \sum\limits_{l\neq m}^j\kappa_{l}^+\nabla G(\boldsymbol z_{m,\ep}^+,\boldsymbol z^+_{l,\ep})+\kappa_m^+ \nabla H(\boldsymbol z_{m,\ep}^+,\boldsymbol z^+_{m,\ep})-\sum\limits_{n=1}^k\kappa_n^-\nabla G(\boldsymbol z_{m,\ep}^+,\boldsymbol z^-_{l,\ep})\\
			&\quad-(W\sin\theta_{m,\ep}^+,0),\quad (s_{m,\varepsilon}^+\cos\xi,s_{m,\varepsilon}^+\sin^{-1}(s_{m,\varepsilon}^+ y_1+\theta_{m,\ep}^+)\sin\xi)\bigg\rangle,
		\end{align*}
		\begin{align*}
			\mathcal F_n^-(\xi)&=\bigg\langle \sum\limits_{l\neq n}^k\kappa_{l}^-\nabla G(\boldsymbol z_{n,\ep}^-,\boldsymbol z^-_{l,\ep})+\kappa_n^- \nabla H(\boldsymbol z_{n,\ep}^-,\boldsymbol z^-_{n,\ep})-\sum\limits_{m=1}^j\kappa_m^+\nabla G(\boldsymbol z_{n,\ep}^-,\boldsymbol z^+_{m,\ep})\\
			&\quad-(W\sin\theta_{m,\ep}^+,0),\quad (s_{n,\varepsilon}^-\cos\xi,s_{n,\varepsilon}^-\sin^{-1}(s_{n,\varepsilon}^- y_1+\theta_{n,\ep}^-)\sin\xi)\bigg\rangle.
		\end{align*}
		Moreover, it holds
		\begin{equation}\label{}
			\left|\tilde{\mathbf\Gamma}_{l,\phi_1}^\pm-\tilde{\mathbf\Gamma}_{l,\phi_2}^\pm\right|=\left(\frac{1}{s_{l,\varepsilon}^\pm\beta_{l,\varepsilon}^\pm}+O(\varepsilon)\right)\|\tilde\phi_{l,1}^\pm-\tilde\phi_{l,2}^\pm\|_{L^\infty(B_L(\boldsymbol 0))}
		\end{equation}
		for two functions $\phi_1,\phi_2$ satisfying \eqref{2-10}.
	\end{lemma}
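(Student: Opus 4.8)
Proof proposal. The plan is to write the equation defining each of the two families of level sets in the variable $\boldsymbol y$ obtained by rescaling about the relevant centre, to observe that the flux constants $\mu^\pm_{l,\varepsilon}$ have been chosen precisely so that the $O(|\ln\varepsilon|)$ and $O(1)$ terms of the resulting equation cancel and the dominant part becomes the ground state $w_\gamma$, and then to apply the implicit function theorem with constants uniform in $\varepsilon$. I treat $\tilde{\mathbf\Gamma}^+_{m,\phi}$, the negative case being identical up to signs; abbreviate $s=s^+_{m,\varepsilon}$, $\beta=\beta^+_{m,\varepsilon}$, $\boldsymbol z_0=\boldsymbol z^+_{m,\varepsilon}$, $\theta_0=\theta^+_{m,\varepsilon}$. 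First I would record that the rescaling $\tilde{(\cdot)}$ of the lemma turns $V^+_{m,\varepsilon}$ into $\frac{\kappa^+_m}{2\pi}\ln\frac{1}{\varepsilon}+\big(\frac{\varepsilon}{s}\big)^{\frac{2}{\gamma-1}}w_\gamma(\boldsymbol y)$ exactly for $|\boldsymbol y|\le 1$, so that on the annulus $\{\frac12\le|\boldsymbol y|\le 2\}$ the function defining $\tilde{\mathbf\Gamma}^+_{m,\phi}$ equals
\begin{equation*}
	\Big(\frac{\varepsilon}{s}\Big)^{\frac{2}{\gamma-1}}w_\gamma(\boldsymbol y)+\tilde\phi^+_m(\boldsymbol y)+E_m(\boldsymbol y),\qquad E_m:=\tilde\Psi^+_{m,\varepsilon}(\boldsymbol y)+W\cos(sy_1+\theta_0)-\mu^+_{m,\varepsilon}-\Big(\frac{\varepsilon}{s}\Big)^{\frac{2}{\gamma-1}}w_\gamma(\boldsymbol y),
\end{equation*}
so that $E_m$ gathers the $V^+_l$ $(l\neq m)$, the $V^-_n$, all the regular parts $R^\pm_l$ and $W\cos\theta$, minus their contributions to $\mu^+_{m,\varepsilon}$. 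Writing $V^\pm_l+R^\pm_l$ and $R^+_m$ as convolutions of $G,\Gamma,H$ against the radially symmetric mass $\frac1{\varepsilon^2}(V^\pm_l-\frac{\kappa^\pm_l}{2\pi}\ln\frac1\varepsilon)^\gamma_+$ (of total mass $\kappa^\pm_l+o(1)$, supported in a disk of radius $O(\varepsilon)$ about the centre) and exploiting the vanishing of its first moment, I would show $V^+_l+R^+_l=\kappa^+_lG(\cdot,\boldsymbol z^+_{l,\varepsilon})+O(\varepsilon^2)$ for $l\neq m$ and $R^+_m=\kappa^+_mH(\cdot,\boldsymbol z^+_{m,\varepsilon})+O(\varepsilon^2)$ near $\boldsymbol z_0$; by the choice of $\mu^+_{m,\varepsilon}$ this makes $E_m(\boldsymbol z_0)=O(\varepsilon^2)$, and a first-order Taylor expansion at $\boldsymbol z_0$, together with $|\boldsymbol z-\boldsymbol z_0|=O(s)=O(\varepsilon)$ on this annulus, identifies the linear part of $E_m$ with $\mathcal F^+_m(\xi)$ and leaves an $O(\varepsilon^2)$ remainder. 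The mismatches between $\Gamma$ and the exact singular part of $G$, between $\sin\theta$ and $\sin\theta_0$, and between $R^\pm_l$ and $\kappa^\pm_lH$ are all $C^1$ near $\boldsymbol z_0$ with their constant parts already removed, so they too enter only at order $O(\varepsilon^2)$.

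Next comes the implicit function theorem. Recall that $w_\gamma\in C^2$, that it vanishes on $\partial B_1(\boldsymbol 0)$, and that $w_\gamma'(1)\neq 0$ (Hopf boundary lemma); moreover \eqref{2-1} reads $\big(\frac{\varepsilon}{s}\big)^{\frac{2}{\gamma-1}}w_\gamma'(1)=s\beta=\frac{\kappa^+_m}{2\pi}\frac{|\ln\varepsilon|}{|\ln s|}$, and since $s\sim\big(\frac{2\pi w_\gamma'(1)}{\kappa^+_m}\big)^{\frac{\gamma-1}{2}}\varepsilon$ and $|\ln\varepsilon|/|\ln s|\to 1$, the quantity $s\beta$ stays in a fixed compact subset of $(0,\infty)$. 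Hence the radial derivative of the above defining function at $|\boldsymbol y|=1$ is $-s\beta+O(\|\nabla_{\boldsymbol y}\tilde\phi^+_m\|_{L^\infty})+O(\varepsilon)=-s\beta\,(1+o(1))$, bounded away from $0$ for $\varepsilon$ small, where $\|\nabla_{\boldsymbol y}\tilde\phi^+_m\|_{L^\infty}=O(\varepsilon)$ follows from \eqref{2-10} by rescaling, and — to obtain a genuinely $C^1$ curve rather than a merely Lipschitz one — one upgrades $\tilde\phi^+_m$ to $C^1(B_L(\boldsymbol 0))$ with norm $O(\varepsilon)$ by interior elliptic estimates for the solution $\phi$ of \eqref{2-4}. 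Thus $\mu^+_{m,\varepsilon}$ is a regular value of $\psi_\varepsilon+W\cos\theta$ on the annulus, and by the implicit function theorem in $\xi\in[0,2\pi)$ together with $2\pi$-periodicity the level set is a $C^1$ closed curve $\tilde{\mathbf\Gamma}^+_{m,\phi}(\xi)=(1+t_\varepsilon(\xi))(\cos\xi,\sin\xi)$. Taylor-expanding $w_\gamma$ about $\partial B_1$ and invoking \eqref{2-1} reduces the scalar equation to $-s\beta\,t_\varepsilon(\xi)+\tilde\phi^+_m(\cos\xi,\sin\xi)+\mathcal F^+_m(\xi)+O(\varepsilon^2)=0$, whence $t_\varepsilon=t_{\varepsilon,\phi}+t_{\varepsilon,\mathcal F^+_m}+O(\varepsilon^2)$ with $t_{\varepsilon,\phi}(\xi)=\frac1{s\beta}\tilde\phi^+_m(\cos\xi,\sin\xi)$ and $t_{\varepsilon,\mathcal F^+_m}(\xi)=\frac1{s\beta}\mathcal F^+_m(\xi)$; rearranging the $\mathcal F^+_m$-part gives \eqref{2-11}, and the negative case gives \eqref{2-12}. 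Differentiating the implicit relation in $\xi$ and using that the dominant term $w_\gamma$ is radial, so that the angular component of the gradient of the defining function is $O(\varepsilon)$, yields $\|t_\varepsilon\|_{C^1[0,2\pi)}=O(\varepsilon)$.

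For the Lipschitz dependence on $\phi$, take $\phi_1,\phi_2$ both satisfying \eqref{2-10}; the two curves are $(1+t_{\varepsilon,i}(\xi))(\cos\xi,\sin\xi)$, where $t_{\varepsilon,i}$ solve the same scalar equation with $\tilde\phi^+_m$ replaced by $\tilde\phi^+_{m,i}$ (the terms $\mathcal F^+_m$ and $O(\varepsilon^2)$ being unchanged, since they depend only on $\boldsymbol z_0$). Subtracting, and writing $\tilde\phi^+_{m,2}\big((1+t_1)(\cos\xi,\sin\xi)\big)-\tilde\phi^+_{m,2}\big((1+t_2)(\cos\xi,\sin\xi)\big)=O(\|\nabla_{\boldsymbol y}\tilde\phi^+_{m,2}\|_{L^\infty})|t_1-t_2|=O(\varepsilon)|t_1-t_2|$, and controlling the difference of the $O(\varepsilon^2)$ remainders likewise by $O(\varepsilon)|t_1-t_2|$, one gets $(s\beta+O(\varepsilon))(t_{\varepsilon,1}-t_{\varepsilon,2})(\xi)=-(\tilde\phi^+_{m,1}-\tilde\phi^+_{m,2})\big((1+t_1(\xi))(\cos\xi,\sin\xi)\big)$; since $s\beta$ is bounded below and the radial direction is a unit vector, this is exactly the last assertion of the lemma with the stated constant $\frac1{s^+_{m,\varepsilon}\beta^+_{m,\varepsilon}}+O(\varepsilon)$.

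The hard part is twofold. First, making the implicit function theorem quantitative and uniform in $\varepsilon$: this works only because \eqref{2-1} pins $(\varepsilon/s)^{2/(\gamma-1)}w_\gamma'(1)=s\beta$ to an $\varepsilon$-independent range, while the non-degeneracy $w_\gamma'(1)\neq0$ of the ground state is itself $\varepsilon$-independent. Second, promoting the curves from Lipschitz to $C^1$ (in fact $C^{1,\alpha}$): the hypothesis \eqref{2-10} alone does not give this, and one must feed in the extra regularity of $\phi$ coming from the elliptic equation \eqref{2-4} (equivalently, the $C^1$ smoothness of $\psi_\varepsilon$ near the cores obtained by bootstrap). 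By contrast, the bookkeeping of the $O(\varepsilon^2)$ errors absorbed into $E_m$ and into the remainders, although lengthy, is routine once the vanishing of the first moment of the vorticity mass has been exploited.
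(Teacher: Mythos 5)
Your proposal follows essentially the same route as the paper: rescale about the vortex centre, use the choice of the flux constant $\mu^+_{m,\varepsilon}$ to reduce the defining function to $\tilde\phi^+_m+\mathcal F^+_m+O(\varepsilon^2)$, read off the non-vanishing radial derivative $s^+_{m,\varepsilon}\beta^+_{m,\varepsilon}+O(\varepsilon)$ from \eqref{2-1}, and conclude by the implicit function theorem. You are in fact more careful than the paper on two points it passes over silently — justifying the $C^1$ (rather than merely $W^{1,p}$) control of $\tilde\phi^+_m$ via elliptic regularity before invoking the implicit function theorem, and spelling out the Lipschitz dependence on $\phi$ — but the argument is the same.
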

	\begin{proof}
		We prove the expansion for $\tilde{\mathbf\Gamma}_{m,\phi}^+$, while that for $\tilde{\mathbf\Gamma}_{n,\phi}^-$ is similar. Let $(y_1,y_2)=(\cos\xi,\sin\xi)$, and the inner product $\mathcal F_m^+(\xi)$ is a regular term of order $O(\varepsilon)$. Then by our choice of $\mu_{l,\varepsilon}^+$ before  \eqref{2-2}, we have
		\begin{equation*}
			\tilde\psi_{m,\varepsilon}^+(\boldsymbol y)-W\cos(s_{m,\varepsilon}^+ y_1+\theta_{m,\ep}^+)-\mu_{m,\varepsilon}^+=\tilde\phi_m^++\mathcal F_m^++O(\varepsilon^2).
		\end{equation*} 
		According to the gradient condition \eqref{2-1}, it holds
		\begin{align*}
			&\nabla\left[\tilde\psi_{m,\varepsilon}^+(\boldsymbol y)-W\cos(s_{m,\varepsilon}^+ y_1+\theta_{m,\ep}^+)-\mu_{m,\varepsilon}^+\right]\bigg|_{|\boldsymbol y|=1}\\
			=&\nabla\left[\tilde V^+_{m,\varepsilon}(\boldsymbol y)-\frac{\kappa_m^+}{2\pi}\ln\frac{1}{\varepsilon}\right]\bigg|_{|\boldsymbol y|=1}+O(\varepsilon)=s_{m,\varepsilon}^+\beta_{m,\varepsilon}^++O(\varepsilon).
		\end{align*}
		By applying the implicit function theorem, we have 
		$$\|t_{\varepsilon,\phi}\|_{C^1[0,2\pi)}=\frac{\| \tilde\phi_m^+\|_{C^1(B_L(\boldsymbol 0))}}{s_{m,\varepsilon}^+\beta_{m,\varepsilon}^++O(\varepsilon)}=O(\varepsilon),$$ 
		and 
		$$\|t_{\varepsilon,\mathcal F_m^+}\|_{C^1[0,2\pi)}=\frac{\|\mathcal F_m^+\|_{C^1(B_L(\boldsymbol 0))}}{s_{m,\varepsilon}^+\beta_{m,\varepsilon}^++O(\varepsilon)}=O(\varepsilon).$$
		Hence $\tilde{\mathbf\Gamma}_{m,\phi}^+(\xi)$ is a $C^1$ closed curve. The quantitative estimates \eqref{2-11} and \eqref{2-12} also follow from the implicit function theorem directly.
	\end{proof}
	
	By Lemma \ref{lem2-3}, we are to solve 
	\begin{equation*}
		\phi_\varepsilon=\mathcal T_\varepsilon N_\varepsilon(\phi_\varepsilon)
	\end{equation*}
	via contraction mapping theorem. In the following lemma, we verify the smallness and the contraction property for the nonlinear perturbation $N_\varepsilon(\phi_\varepsilon)$ and prove the existence of $\phi_\varepsilon$ in $E_\varepsilon$.
	\begin{lemma}\label{lem2-5}
		There exists a small $\varepsilon_0>0$ such that for any $\varepsilon\in(0,\varepsilon_0]$, there is a unique solution $\phi_\varepsilon\in E_\varepsilon$ to \eqref{2-4}. Moreover  $\phi_\varepsilon$ satisfies
		\begin{equation}\label{2-14}
			\|\phi_\varepsilon\|_*+\varepsilon^{1-\frac{2}{p}}\|\nabla\phi_\varepsilon\|_{L^p\left( \left(\cup_{m=1}^jB_{L\varepsilon}(\boldsymbol z_{m,\varepsilon}^+)\right)\cup \left(\cup_{n=1}^kB_{L\varepsilon}(\boldsymbol z_{n,\varepsilon}^-)\right)\right)}= O(\varepsilon)
		\end{equation}
		for $p\in(2,+\infty]$.
	\end{lemma}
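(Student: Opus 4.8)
\emph{Proof strategy.} The plan is to realize \eqref{2-4} with $\mathbf h=N_\varepsilon(\phi_\varepsilon)$ as a fixed point of the map $\mathcal G_\varepsilon:=\mathcal T_\varepsilon\circ N_\varepsilon$ on the closed ball
\begin{equation*}
\mathcal B_\varepsilon:=\left\{\phi\in E_\varepsilon\ \big|\ \|\phi\|_*+\varepsilon^{1-\frac2p}\|\nabla\phi\|_{L^p\left(\left(\cup_{m=1}^jB_{L\varepsilon}(\boldsymbol z_{m,\varepsilon}^+)\right)\cup\left(\cup_{n=1}^kB_{L\varepsilon}(\boldsymbol z_{n,\varepsilon}^-)\right)\right)}\le C_1\varepsilon\right\},
\end{equation*}
for a large constant $C_1$ to be fixed, where $L$ is taken so large that, by \eqref{2-1} and Lemma~\ref{lem2-4} (which force $s_{l,\varepsilon}^\pm\sim\varepsilon$ and make each vortex set $\Omega_{m,\varepsilon}^+,\Omega_{n,\varepsilon}^-$ an $O(\varepsilon)$-perturbation of the core ball $B_{s_{l,\varepsilon}^\pm}(\boldsymbol z_{l,\varepsilon}^\pm)$), one has $\mathrm{supp}\,N_\varepsilon(\phi)\subset\left(\cup_{m=1}^jB_{L\varepsilon}(\boldsymbol z_{m,\varepsilon}^+)\right)\cup\left(\cup_{n=1}^kB_{L\varepsilon}(\boldsymbol z_{n,\varepsilon}^-)\right)$ for every $\phi\in\mathcal B_\varepsilon$, so that Lemma~\ref{lem2-3} applies and $\mathcal G_\varepsilon$ maps $E_\varepsilon$ into itself. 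By the bound \eqref{2-9} it then suffices to establish two facts: (a) $\varepsilon^{1-\frac2p}\|N_\varepsilon(0)\|_{W^{-1,p}}\le C\varepsilon$; and (b) $\varepsilon^{1-\frac2p}\|N_\varepsilon(\phi_1)-N_\varepsilon(\phi_2)\|_{W^{-1,p}}\le C\varepsilon^{\sigma}\|\phi_1-\phi_2\|_*$ for $\phi_1,\phi_2\in\mathcal B_\varepsilon$, where $\sigma:=\min\{1,\gamma-1\}>0$.

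Both estimates I would localize near each $\boldsymbol z_{l,\varepsilon}^\pm$: rescaling by the tangent map $\mathcal A_{l,\varepsilon}^\pm$ and using $s_{l,\varepsilon}^\pm\sim\varepsilon$, a change of variables gives $\varepsilon^{1-\frac2p}\|f\|_{W^{-1,p}(B_{L\varepsilon}(\boldsymbol z_{l,\varepsilon}^\pm))}\le C\varepsilon^{2}\|\tilde f\|_{W^{-1,p}(B_L(\boldsymbol0))}$ for $f$ supported in $B_{L\varepsilon}(\boldsymbol z_{l,\varepsilon}^\pm)$. For (a), the choice of $\mu_{m,\varepsilon}^+$ made just before \eqref{2-2} ensures that on $B_\delta(\boldsymbol z_m^+)$ the approximate stream function satisfies $\Psi_\varepsilon+W\cos\theta-\mu_{m,\varepsilon}^+=A_m+e_{m,\varepsilon}$, with $A_m:=V_{m,\varepsilon}^+-\frac{\kappa_m^+}{2\pi}\ln\frac1\varepsilon$ and, by the expansion in Lemma~\ref{lem2-4}, $e_{m,\varepsilon}=\mathcal F_m^++O(\varepsilon^2)=O(\varepsilon)$ a regular remainder; hence near each positive vortex $N_\varepsilon(0)=\frac1{\varepsilon^2}\sum_m\big((A_m+e_{m,\varepsilon})_+^\gamma-A_{m,+}^\gamma\big)$, and analogously near the negative ones. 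Since $\gamma>1$ makes $t\mapsto t_+^\gamma$ of class $C^1$ and $A_{m,+}^{\gamma-1}=O(1)$ on the $O(\varepsilon)$-neighbourhood of the core, $N_\varepsilon(0)=O(\varepsilon^{-1})$ on a set of measure $O(\varepsilon^2)$, and after rescaling $\varepsilon^{2}\|\widetilde{N_\varepsilon(0)}\|_{W^{-1,p}(B_L)}=O(\varepsilon)$. The piece coming from the mismatch between $\Omega_{m,\varepsilon}^+$ and $B_{s_{m,\varepsilon}^+}(\boldsymbol z_{m,\varepsilon}^+)$ lives, in rescaled variables, in an annulus of width $O(\varepsilon)$ on which the integrand is $O(\varepsilon^\gamma)$ (as $w_\gamma$ vanishes linearly on $\partial B_1$), so it is of lower order.

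For (b), I would set $B_i:=e_{m,\varepsilon}+\phi_i$ so that $\psi_\varepsilon+W\cos\theta-\mu_{m,\varepsilon}^+=A_m+B_i$ on $B_\delta(\boldsymbol z_m^+)$; recalling that $N_\varepsilon$ subtracts the linearization $\gamma A_{m,+}^{\gamma-1}\phi$, one obtains, modulo the support mismatch,
\begin{equation*}
N_\varepsilon^+(\phi_1)-N_\varepsilon^+(\phi_2)=\frac1{\varepsilon^2}\sum_{m=1}^j\Big[(A_m+B_1)_+^\gamma-(A_m+B_2)_+^\gamma-\gamma A_{m,+}^{\gamma-1}(\phi_1-\phi_2)\Big],
\end{equation*}
and the mean value theorem rewrites the bracket as $\gamma\big(\xi_{m,+}^{\gamma-1}-A_{m,+}^{\gamma-1}\big)(\phi_1-\phi_2)$ with $\xi_m$ between $A_m+B_1$ and $A_m+B_2$. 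On $\mathcal B_\varepsilon$ we have $|B_i|=O(\varepsilon)$, so the Hölder continuity of $t\mapsto t_+^{\gamma-1}$ of exponent $\min\{1,\gamma-1\}$ gives $|\xi_{m,+}^{\gamma-1}-A_{m,+}^{\gamma-1}|\le C\varepsilon^{\sigma}$, whence the bracket is $O(\varepsilon^{\sigma})|\phi_1-\phi_2|$ and, after rescaling, $\varepsilon^{2}\|\widetilde{N_\varepsilon^+(\phi_1)-N_\varepsilon^+(\phi_2)}\|_{W^{-1,p}(B_L)}\le C\varepsilon^{\sigma}\|\phi_1-\phi_2\|_*$. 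The remaining domain-mismatch contribution is controlled by the last estimate of Lemma~\ref{lem2-4}: the symmetric difference of the two vortex sets has rescaled width $O(\|\phi_1-\phi_2\|_*)$, on which the relevant power is $O(\|\phi_1-\phi_2\|_*^{\gamma})\le O(\varepsilon^{\gamma-1})\|\phi_1-\phi_2\|_*$, again of the stated form; the negative vortices are identical, which gives (b).

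Combining (a)--(b) with \eqref{2-9}: for $\phi\in\mathcal B_\varepsilon$ the norm of $\mathcal G_\varepsilon(\phi)$ appearing in \eqref{2-14} is at most $c_0(C\varepsilon^{\sigma}\|\phi\|_*+C\varepsilon)\le(c_0CC_1\varepsilon^{\sigma}+c_0C)\varepsilon$, which is $\le C_1\varepsilon$ once $C_1\ge2c_0C$ and $\varepsilon_0$ is so small that $c_0C\varepsilon_0^{\sigma}\le\frac12$; likewise $\|\mathcal G_\varepsilon(\phi_1)-\mathcal G_\varepsilon(\phi_2)\|\le c_0C\varepsilon^{\sigma}\|\phi_1-\phi_2\|_*\le\frac12\|\phi_1-\phi_2\|$. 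Thus $\mathcal G_\varepsilon$ is a contraction of $\mathcal B_\varepsilon$ into itself, and the Banach fixed point theorem produces a unique $\phi_\varepsilon\in\mathcal B_\varepsilon$ with $\phi_\varepsilon=\mathcal T_\varepsilon N_\varepsilon(\phi_\varepsilon)$, i.e. a solution of \eqref{2-4} with $\mathbf h=N_\varepsilon(\phi_\varepsilon)$; membership $\phi_\varepsilon\in\mathcal B_\varepsilon$ is exactly \eqref{2-14}, and uniqueness extends to $E_\varepsilon$ because any $E_\varepsilon$-solution inherits the bound \eqref{2-14} through \eqref{2-9} and Lemma~\ref{lem2-4}. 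I expect the principal difficulty to be organizing the $W^{-1,p}$ estimates for $N_\varepsilon$ so that the domain-perturbation analysis of Lemma~\ref{lem2-4} and the Taylor/Hölder expansion of $t_+^\gamma$ combine with precisely the scaling exponents that render \eqref{2-9} self-improving — particularly in the range $1<\gamma<2$, where $t_+^{\gamma-1}$ is only Hölder continuous and the contraction factor degrades from $O(\varepsilon)$ to $O(\varepsilon^{\gamma-1})$, though it remains $o(1)$.
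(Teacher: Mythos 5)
Your proposal is correct and follows essentially the same route as the paper: a contraction mapping argument for $\mathcal G_\varepsilon=\mathcal T_\varepsilon\circ N_\varepsilon$ on the ball $\mathcal B_\varepsilon$, using Lemma~\ref{lem2-3} for the linear bound, Lemma~\ref{lem2-4} for the $O(\varepsilon)$ perturbation of the vortex boundaries, and the Taylor/H\"older expansion of $t\mapsto t_+^\gamma$ to control both the self-map and the contraction estimates (your splitting into a bound on $N_\varepsilon(0)$ plus a Lipschitz bound is only a cosmetic reorganization of the paper's direct estimate of $N_\varepsilon(\phi)$ on $\mathcal B_\varepsilon$). Your explicit contraction rate $O(\varepsilon^{\min\{1,\gamma-1\}})$ is a slightly sharper statement of the paper's $o_\varepsilon(1)$ factor and changes nothing in the conclusion.
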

	\begin{proof}
		Denote $\mathcal G_\varepsilon:=\mathcal T_\varepsilon R_\varepsilon$, and a neighborhood of origin in $E_\varepsilon$ as
		\begin{equation*}
			\mathcal B_\varepsilon:=E_\varepsilon\cap \left\{\phi \ | \ \|\phi\|_*+\varepsilon^{1-\frac{2}{p}}\|\nabla\phi\|_{L^p\left( \left(\cup_{m=1}^jB_{L\varepsilon}(\boldsymbol z_{m,\varepsilon}^+)\right)\cup \left(\cup_{n=1}^kB_{L\varepsilon}(\boldsymbol z_{n,\varepsilon}^-)\right)\right)}\le C\varepsilon, \ p\in(2,\infty]\right\}
		\end{equation*}
		with $C$ a large positive constant. We show that $\mathcal G_\varepsilon$ is a contraction map from $\mathcal B_\varepsilon$ to $\mathcal B_\varepsilon$ so that a unique fixed point $\phi_\varepsilon$ can be obtained by the contraction mapping theorem. Actually, letting $\mathbf h=N_\varepsilon(\phi)$ for $\phi\in\mathcal B_\varepsilon$, and noticing that $N_\varepsilon(\phi)$ satisfies the assumptions for $\mathbf h$ in Lemma \ref{lem2-3}, we obtain
		\begin{align*}
			\|\mathcal T_\varepsilon N_\varepsilon(\phi)\|_*+\varepsilon^{1-\frac{2}{p}}\|\nabla\mathcal T_\varepsilon N_\varepsilon(\phi)&\|_{L^p\left( \left(\cup_{m=1}^jB_{L\varepsilon}(\boldsymbol z_{m,\varepsilon}^+)\right)\cup \left(\cup_{n=1}^kB_{L\varepsilon}(\boldsymbol z_{n,\varepsilon}^-)\right)\right)}\\
			&\le c_0\varepsilon^{1-\frac{2}{p}}\|N_\varepsilon(\phi) \|_{W^{-1,p}\left( \left(\cup_{m=1}^jB_{L\varepsilon}(\boldsymbol z_{m,\varepsilon}^+)\right)\cup \left(\cup_{n=1}^kB_{L\varepsilon}(\boldsymbol z_{n,\varepsilon}^-)\right)\right)}.
		\end{align*}
		
		\bigskip
		
		To begin with, we are to show that $\mathcal G_\varepsilon$ maps $\mathcal B_\varepsilon$ continuously into itself. For simplicity, we only consider $N_\varepsilon(\phi_\varepsilon)$ restricted in $B_\delta(\boldsymbol z_{m,\ep}^+)$.  For each $\zeta(\boldsymbol y)\in C_0^\infty(B_{L}(\boldsymbol 0))$, by the estimate for $\Omega_{m,\varepsilon}^+$ in \ref{2-11}, we can calculate to obtain
		\begin{align*}
			&\quad\frac{s_{m,\varepsilon}^{+2}}{\varepsilon^2}\int_{B_L(\boldsymbol 0)}\left[\left(\tilde\psi_{m,\varepsilon}^++W\cos(s_{m,\varepsilon}^+ y_1+\theta_{m,\varepsilon}^+)-\mu_{m,\varepsilon}^+\right)_+^\gamma-\left(\tilde V_{m,\varepsilon}^+-\frac{\kappa_m^+}{2\pi}\ln\frac{1}{\varepsilon}\right)_+^\gamma\right]\zeta d\boldsymbol y\\
			&\quad\quad-\frac{\gamma s_{m,\varepsilon}^{+2}}{\varepsilon^2}\int_{B_L(\boldsymbol 0)}\left(\tilde V_{m,\varepsilon}^+-\frac{\kappa_m^+}{2\pi}\ln\frac{1}{\varepsilon}\right)_+^{\gamma-1}\phi \zeta d\boldsymbol y\\
			&=\frac{\gamma s_{m,\varepsilon}^{+2}}{\varepsilon^2}\int_{B_1(\boldsymbol 0)}\left(\tilde V_{m,\varepsilon}^+-\frac{\kappa_m^+}{2\pi}\ln\frac{1}{\varepsilon}\right)_+^{\gamma-1}\left[\mathcal F_m^++O(\ep^2)\right]\zeta d\boldsymbol y\\
			&\quad+\frac{s_{m,\varepsilon}^{+2}}{\varepsilon^2}\int_{B_{1+t_\ep}(\boldsymbol 0)\setminus B_1(\boldsymbol 0)}\left(\tilde\psi_{m,\varepsilon}^++W\cos(s_{m,\varepsilon}^+ y_1+\theta_{m,\varepsilon}^+)-\mu_{m,\varepsilon}^+\right)_+^\gamma\zeta d\boldsymbol y\\
			&=O(\varepsilon)\|\zeta\|_{W^{1,p'}(B_{L}(\boldsymbol 0)}.
		\end{align*}
		Hence it holds
		\begin{equation*}
			\varepsilon^{1-\frac{2}{p}}\|N_\varepsilon(\phi)\|_{W^{-1,p}\left( \left(\cup_{m=1}^jB_{L\varepsilon}(\boldsymbol z_{m,\varepsilon}^+)\right)\cup \left(\cup_{n=1}^kB_{L\varepsilon}(\boldsymbol z_{n,\varepsilon}^-)\right)\right)}=O(\varepsilon),
		\end{equation*}
		and
		\begin{equation*}
			\|\mathcal T_\varepsilon N_\varepsilon(\phi)\|_*+\varepsilon^{1-\frac{2}{p}}\|\nabla\mathcal T_\varepsilon N_\varepsilon(\phi)\|_{L^p\left( \left(\cup_{m=1}^jB_{L\varepsilon}(\boldsymbol z_{m,\varepsilon}^+)\right)\cup \left(\cup_{n=1}^kB_{L\varepsilon}(\boldsymbol z_{n,\varepsilon}^-)\right)\right)}=O(\varepsilon)
		\end{equation*}
		for $p\in(2,+\infty]$. Thus the operator $\mathcal G_\varepsilon$ indeed maps $\mathcal B_\varepsilon$ to $\mathcal B_\varepsilon$ continuously.

		In the next step, we are to verify that $\mathcal G_\varepsilon$ is a contraction mapping under the norm
		\begin{equation*}
			\|\cdot\|_{\mathcal G_\varepsilon}=\|\cdot\|_*+\varepsilon^{1-\frac{2}{p}}\|\cdot\|_{W^{1,p}\left( \left(\cup_{m=1}^jB_{L\varepsilon}(\boldsymbol z_{m,\varepsilon}^+)\right)\cup \left(\cup_{n=1}^kB_{L\varepsilon}(\boldsymbol z_{n,\varepsilon}^-)\right)\right)}, \quad p\in(2,+\infty].
		\end{equation*}
		We already know that $\mathcal B_\varepsilon$ is close to this norm. Let $\phi_1$ and $\phi_2$ be two functions in $\mathcal B_\varepsilon$. In view of Lemma \ref{lem2-3}, we have
		\begin{equation*}
			\|\mathcal G_\varepsilon\phi_1-\mathcal G_\varepsilon\phi_2\|_{\mathcal G_\varepsilon}\le C\varepsilon^{1-\frac{2}{p}}\|N_\varepsilon^+(\phi_1)-N_\varepsilon^+(\phi_2) \|_{W^{-1,p}\left( \left(\cup_{m=1}^jB_{L\varepsilon}(\boldsymbol z_{m,\varepsilon}^+)\right)\cup \left(\cup_{n=1}^kB_{L\varepsilon}(\boldsymbol z_{n,\varepsilon}^-)\right)\right)},
		\end{equation*}
		where
		\begin{align*}
			&\quad N_\varepsilon(\phi_1)-N_\varepsilon(\phi_2)\\
			&=\frac{1}{\varepsilon^2}\sum_{m=1}^j\boldsymbol 1_{B_\delta(\boldsymbol z_m^+)}\big[(\Psi_\varepsilon+\phi_1+W\cos\theta>\mu_{m,\varepsilon}^+)^\gamma_+-(\Psi_\varepsilon+\phi_2+W\cos\theta>\mu_{m,\varepsilon}^+)_+^\gamma\big]\\
			&\quad -\frac{\gamma}{\varepsilon^2}\sum_{m=1}^j\left(V^+_{m,\varepsilon}(\boldsymbol z)-\frac{\kappa_m^+}{2\pi}\ln\frac{1}{\varepsilon}\right)_+^{\gamma-1}(\phi_1-\phi_2)\\
			&\quad-\frac{1}{\varepsilon^2}\sum_{n=1}^k\boldsymbol 1_{B_\delta(\boldsymbol z_n^-)}\big[(-\Psi_\varepsilon-\phi_1-W\cos\theta>\mu_{n,\varepsilon}^-)^\gamma_+-(-\Psi_\varepsilon-\phi_2-W\cos\theta>\mu_{n,\varepsilon}^-)_+^\gamma\big]\\
			&\quad -\frac{\gamma}{\varepsilon^2}\sum_{n=1}^k\left(V^-_{n,\varepsilon}(\boldsymbol z)-\frac{\kappa_n^-}{2\pi}\ln\frac{1}{\varepsilon}\right)_+^{\gamma-1}(\phi_1-\phi_2)
		\end{align*}
		According to \eqref{2-11} in Lemma \ref{lem2-4}, and the Taylor expansion
		\begin{equation*}
			(1+t)^\gamma=1+\gamma t+O\left(t^{\min(\gamma,2)}\right), \quad t\ge 0, \ \gamma>1
		\end{equation*}
		for each $\zeta\in C_0^\infty(B_L(\boldsymbol 0))$, it holds
		\begin{align*}
			&\quad\,\, \frac{s_{m,\varepsilon}^{+2}}{\varepsilon^2}\int_{B_L(\boldsymbol 0)}\left[(\tilde\Psi_{m,\varepsilon}^++\tilde\phi_{m,1}^++W\cos(s_{m,\ep}^+y_1+\theta_{m,\ep}^+)>\mu_{m,\varepsilon}^+)^\gamma_+\right.\\
			&\left.\quad\quad-(\tilde\Psi_{m,\varepsilon}^++\tilde\phi_{m,2}^++W\cos(s_{m,\ep}^+y_1+\theta_{m,\ep}^+)>\mu_{m,\varepsilon}^+)_+^\gamma\right]\zeta d\boldsymbol y\\
			&=\frac{\gamma s_{m,\varepsilon}^{+2}}{\varepsilon^2}\int_{B_1(\boldsymbol 0)} \left(V^+_{m,\varepsilon}(\boldsymbol z)-\frac{\kappa_m^+}{2\pi}\ln\frac{1}{\varepsilon}\right)_+^{\gamma-1}\cdot(\phi_1-\phi_2)\zeta d\boldsymbol y\\
			&\quad\quad+\frac{s_{m,\varepsilon}^{+2}}{\varepsilon^2}\int_{B_{1+t_{\ep,\phi_1}}(\boldsymbol 0)\setminus B_{1+t_{\ep,\phi_2}}(\boldsymbol 0)}\left(\tilde\Psi_{m,\varepsilon}^++W\cos(s_{m,\varepsilon}^+ y_1+\theta_{m,\varepsilon}^+)-\mu_{m,\varepsilon}^+\right)_+^\gamma\zeta d\boldsymbol y\\
			&\quad\quad +O_\ep(1)\cdot\int_{B_1(\boldsymbol 0)}\left(\phi_1^{\min(\gamma-1,1)}-\phi_2^{\min(\gamma-1,1)}\right)(\phi_1-\phi_2)\zeta d\boldsymbol  y\\
			&=\frac{\gamma s_{m,\varepsilon}^{+2}}{\varepsilon^2}\int_{B_1(\boldsymbol 0)} \left(V^+_{m,\varepsilon}(\boldsymbol z)-\frac{\kappa_m^+}{2\pi}\ln\frac{1}{\varepsilon}\right)_+^{\gamma-1}\cdot(\phi_1-\phi_2)\zeta d\boldsymbol y\\
			&\quad\quad+o_\ep(1)\cdot\|\tilde\phi_1-\tilde\phi_2\|_{L^\infty(B_L(\boldsymbol 0))}\|\zeta\|_{W^{1,p'}(B_L(\boldsymbol 0))}.
		\end{align*}
		Thus we have
			\begin{align*}
			&\quad\,\, \frac{s_{m,\varepsilon}^{+2}}{\varepsilon^2}\int_{B_L(\boldsymbol 0)}\left[(\tilde\Psi_{m,\varepsilon}^++\tilde\phi_{m,1}^++W\cos(s_{m,\ep}^+y_1+\theta_{m,\ep}^+)>\mu_{m,\varepsilon}^+)^\gamma_+\right.\\
			&\left.\quad\quad-(\tilde\Psi_{m,\varepsilon}^++\tilde\phi_{m,2}^++W\cos(s_{m,\ep}^+y_1+\theta_{m,\ep}^+)>\mu_{m,\varepsilon}^+)_+^\gamma\right]\zeta d\boldsymbol y\\
			&\quad\quad-\frac{\gamma s_{m,\varepsilon}^{+2}}{\varepsilon^2}\int_{B_1(\boldsymbol 0)} \left(V^+_{m,\varepsilon}(\boldsymbol z)-\frac{\kappa_m^+}{2\pi}\ln\frac{1}{\varepsilon}\right)_+^{\gamma-1}\cdot(\phi_1-\phi_2)\zeta d\boldsymbol y\\
			&=o_\ep(1)\cdot\|\tilde\phi_1-\tilde\phi_2\|_{L^\infty(B_L(\boldsymbol 0))}||\zeta ||_{W^{1,p'}(B_L(\boldsymbol 0))}.
		\end{align*}
		By dealing with other vortices using the same procedure, we conclude that
		\begin{equation*}
			\varepsilon^{1-\frac{2}{p}}\|N_\varepsilon(\phi_1)-N_\varepsilon(\phi_2) \|_{W^{-1,p}\left( \left(\cup_{m=1}^jB_{L\varepsilon}(\boldsymbol z_{m,\varepsilon}^+)\right)\cup \left(\cup_{n=1}^kB_{L\varepsilon}(\boldsymbol z_{n,\varepsilon}^-)\right)\right)}=o_\varepsilon(1)\cdot \|\phi_1-\phi_2\|_{\mathcal G_\varepsilon},
		\end{equation*}
		and
		\begin{equation*}
			\|\mathcal G_\varepsilon\phi_1-\mathcal G_\varepsilon\phi_2\|_{\mathcal G_\varepsilon}=o_\varepsilon(1)\cdot \|\phi_1-\phi_2\|_{\mathcal G_\varepsilon}.
		\end{equation*}
		Hence we have shown that $\mathcal G_\varepsilon$ is a contraction map from $\mathcal B_\varepsilon$ into itself.
		
		By applying the contraction mapping theorem, we now can claim that there is a unique $\phi_\varepsilon\in \mathcal B_\varepsilon$ such that $\phi_\varepsilon=\mathcal G_\varepsilon\phi_\varepsilon$, which satisfies \eqref{2-14}. 
	\end{proof}
	
	Although we have the above existing result on projection problem \eqref{2-4}, the error function $\phi_\ep$ does not solve the primitive equation \eqref{2-2} yet. To obtain $\psi_\ep$ as solutions to \eqref{1-6}, we solve a $(2j+2k)$-dimensional problem in the kernel of $\mathbb L_\ep$.
	
	\smallskip
	
	\subsection{The $(2j+2k)$-dimensional problem}
	
	If we multiply \eqref{2-4} with $X_{l,\varepsilon}^\pm$, $Y_{i,\varepsilon}^\pm$ and integrating on $\mathbb S^2$,  we have following equations for the projection vector 
	$$\boldsymbol \Lambda=(a_1^+,\cdots,a_j^+,b_1^+,\cdots,b_j^+, a_1^-,\cdots,a_k^-,b_1^-,\cdots,b_k^-).$$
	\begin{footnotesize}
	\begin{align*}
		&\int_{\mathbb S^2}\left[(-\Delta_{\mathbb S^2})\psi_\varepsilon-\frac{1}{\varepsilon^2}\sum_{m=1}^j\boldsymbol1_{B_\delta(\boldsymbol z_m^+)}(\psi_\varepsilon+W\cos\theta-\mu_{m,\varepsilon}^+)_+^\gamma+\frac{1}{\varepsilon^2}\sum_{n=1}^k\boldsymbol1_{B_\delta(\boldsymbol z_n^-)}(-\psi_\varepsilon-W\cos\theta-\mu_{n,\varepsilon}^-)^\gamma_+\right]X_{m,\varepsilon}^+ d\boldsymbol\sigma\\
		&=a_m^+\int_{\mathbb S^2} X_{m,\varepsilon}^+(-\Delta_{\mathbb S^2})X_{m,\varepsilon}^+ d\boldsymbol \sigma,
	\end{align*}
	\begin{align*}
		&\int_{\mathbb S^2}\left[(-\Delta_{\mathbb S^2})\psi_\varepsilon-\frac{1}{\varepsilon^2}\sum_{m=1}^j\boldsymbol1_{B_\delta(\boldsymbol z_m^+)}(\psi_\varepsilon+W\cos\theta-\mu_{m,\varepsilon}^+)_+^\gamma+\frac{1}{\varepsilon^2}\sum_{n=1}^k\boldsymbol1_{B_\delta(\boldsymbol z_n^-)}(-\psi_\varepsilon-W\cos\theta-\mu_{n,\varepsilon}^-)^\gamma_+\right]Y_{m,\varepsilon}^+ d\boldsymbol\sigma\\
		&=b_m^+\int_{\mathbb S^2} Y_{m,\varepsilon}^+(-\Delta_{\mathbb S^2})Y_{m,\varepsilon}^+ d\boldsymbol \sigma,
	\end{align*}
	\begin{align*}
		&\int_{\mathbb S^2}\left[(-\Delta_{\mathbb S^2})\psi_\varepsilon-\frac{1}{\varepsilon^2}\sum_{m=1}^j\boldsymbol1_{B_\delta(\boldsymbol z_m^+)}(\psi_\varepsilon+W\cos\theta-\mu_{m,\varepsilon}^+)_+^\gamma+\frac{1}{\varepsilon^2}\sum_{n=1}^k\boldsymbol1_{B_\delta(\boldsymbol z_n^-)}(-\psi_\varepsilon-W\cos\theta-\mu_{n,\varepsilon}^-)^\gamma_+\right]X_{n,\varepsilon}^- d\boldsymbol\sigma\\
		&=a_n^-\int_{\mathbb S^2} X_{n,\varepsilon}^-(-\Delta_{\mathbb S^2})X_{n,\varepsilon}^- d\boldsymbol \sigma,
	\end{align*}
	\begin{align*}
		&\int_{\mathbb S^2}\left[(-\Delta_{\mathbb S^2})\psi_\varepsilon-\frac{1}{\varepsilon^2}\sum_{m=1}^j\boldsymbol1_{B_\delta(\boldsymbol z_m^+)}(\psi_\varepsilon+W\cos\theta-\mu_{m,\varepsilon}^+)_+^\gamma+\frac{1}{\varepsilon^2}\sum_{n=1}^k\boldsymbol1_{B_\delta(\boldsymbol z_n^-)}(-\psi_\varepsilon-W\cos\theta-\mu_{n,\varepsilon}^-)^\gamma_+\right]Y_{n,\varepsilon}^- d\boldsymbol\sigma\\
		&=b_n^-\int_{\mathbb S^2} Y_{n,\varepsilon}^-(-\Delta_{\mathbb S^2})Y_{n,\varepsilon}^- d\boldsymbol \sigma.
	\end{align*}
	\end{footnotesize}To make $\psi_\varepsilon$ a solution to \eqref{1-6}, we have the following lemma on the value of the left-hand side of the above equations. 
	\begin{lemma}\label{lem2-6}
		One has
		\begin{footnotesize}
		\begin{align*}
			&\quad\int_{\mathbb S^2}\left[(-\Delta_{\mathbb S^2})\psi_\varepsilon-\frac{1}{\varepsilon^2}\sum_{m=1}^j\boldsymbol1_{B_\delta(\boldsymbol z_m^+)}(\psi_\varepsilon+W\cos\theta-\mu_{m,\varepsilon}^+)_+^\gamma+\frac{1}{\varepsilon^2}\sum_{n=1}^k\boldsymbol1_{B_\delta(\boldsymbol z_n^-)}(-\psi_\varepsilon-W\cos\theta-\mu_{n,\varepsilon}^-)^\gamma_+\right]X_{m,\varepsilon}^+ d\boldsymbol\sigma\\
			&=C_m^+\cdot\left[\kappa_i^+\sum\limits_{i\neq m}^j\partial_\theta G(\boldsymbol z_{m,\varepsilon}^+,\boldsymbol z_{i,\varepsilon}^+)+ \kappa_m^+ \partial_\theta H(\boldsymbol z_{m,\varepsilon}^+,\boldsymbol z_{m,\varepsilon}^+)-\kappa_l^-\sum\limits_{l=1}^k\partial_\theta G(\boldsymbol z_{m,\varepsilon}^+,\boldsymbol z_{l,\varepsilon}^-)-W\sin\theta_{m,\varepsilon}^++o_\varepsilon(1)\right],
		\end{align*}
		\begin{align*}
			&\quad\int_{\mathbb S^2}\left[(-\Delta_{\mathbb S^2})\psi_\varepsilon-\frac{1}{\varepsilon^2}\sum_{m=1}^j\boldsymbol1_{B_\delta(\boldsymbol z_m^+)}(\psi_\varepsilon+W\cos\theta-\mu_{m,\varepsilon}^+)_+^\gamma+\frac{1}{\varepsilon^2}\sum_{n=1}^k\boldsymbol1_{B_\delta(\boldsymbol z_n^-)}(-\psi_\varepsilon-W\cos\theta-\mu_{n,\varepsilon}^-)^\gamma_+\right]Y_{m,\varepsilon}^+ d\boldsymbol\sigma\\
			&=C_m^-\cdot\left[\kappa_i^+\sum\limits_{i\neq m}^j\partial_\varphi G(\boldsymbol z_{m,\varepsilon}^+,\boldsymbol z_{i,\varepsilon}^+)+ \kappa_m^+ \partial_\varphi H(\boldsymbol z_{m,\varepsilon}^+,\boldsymbol z_{m,\varepsilon}^+)-\kappa_l^-\sum\limits_{l=1}^k\partial_\varphi G(\boldsymbol z_{m,\varepsilon}^+,\boldsymbol z_{l,\varepsilon}^-)+o_\varepsilon(1)\right],
		\end{align*}
		\begin{align*}
			&\quad\int_{\mathbb S^2}\left[(-\Delta_{\mathbb S^2})\psi_\varepsilon-\frac{1}{\varepsilon^2}\sum_{m=1}^j\boldsymbol1_{B_\delta(\boldsymbol z_m^+)}(\psi_\varepsilon+W\cos\theta-\mu_{m,\varepsilon}^+)_+^\gamma+\frac{1}{\varepsilon^2}\sum_{n=1}^k\boldsymbol1_{B_\delta(\boldsymbol z_n^-)}(-\psi_\varepsilon-W\cos\theta-\mu_{n,\varepsilon}^-)^\gamma_+\right]X_{n,\varepsilon}^- d\boldsymbol\sigma\\
			&=C_n^+\cdot\left[\kappa_l^+\sum\limits_{l\neq n}^k\partial_\theta G(\boldsymbol z_{n,\varepsilon}^-,\boldsymbol z_{l,\varepsilon}^-)+ \kappa_n^+ \partial_\theta H(\boldsymbol z_{n,\varepsilon}^+,\boldsymbol z_{n,\varepsilon}^+)-\kappa_i^-\sum\limits_{i=1}^j\partial_\theta G(\boldsymbol z_{n,\varepsilon}^+,\boldsymbol z_{i,\varepsilon}^-)-W\sin\theta_{n,\varepsilon}^-+o_\varepsilon(1)\right],
		\end{align*}
		\begin{align*}
			&\quad\int_{\mathbb S^2}\left[(-\Delta_{\mathbb S^2})\psi_\varepsilon-\frac{1}{\varepsilon^2}\sum_{m=1}^j\boldsymbol1_{B_\delta(\boldsymbol z_m^+)}(\psi_\varepsilon+W\cos\theta-\mu_{m,\varepsilon}^+)_+^\gamma+\frac{1}{\varepsilon^2}\sum_{n=1}^k\boldsymbol1_{B_\delta(\boldsymbol z_n^-)}(-\psi_\varepsilon-W\cos\theta-\mu_{n,\varepsilon}^-)^\gamma_+\right]Y_{n,\varepsilon}^- d\boldsymbol\sigma\\
			&=C_n^-\cdot\left[\kappa_l^+\sum\limits_{l\neq n}^k\partial_\varphi G(\boldsymbol z_{n,\varepsilon}^-,\boldsymbol z_{l,\varepsilon}^-)+ \kappa_n^+ \partial_\varphi H(\boldsymbol z_{n,\varepsilon}^+,\boldsymbol z_{n,\varepsilon}^+)-\kappa_i^-\sum\limits_{i=1}^j\partial_\varphi G(\boldsymbol z_{n,\varepsilon}^+,\boldsymbol z_{i,\varepsilon}^-)+o_\varepsilon(1)\right],
		\end{align*}
		\end{footnotesize}
		where $C_m^\pm$ and $C_m^\pm$ are positive constants.
	\end{lemma}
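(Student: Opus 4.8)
The plan is to evaluate each of the four left-hand sides by localizing the integral near a single vortex. Since $X_{m,\varepsilon}^+=\chi_m^+\,\partial_\theta U_{m,\varepsilon}^+$ and $Y_{m,\varepsilon}^+=\chi_m^+\,\partial_\varphi U_{m,\varepsilon}^+$ are supported in the cap $B_{2\varepsilon|\ln\varepsilon|}(\boldsymbol z_{m,\varepsilon}^+)$, for $\varepsilon$ small the indicators $\boldsymbol 1_{B_\delta(\boldsymbol z_l^+)}$ with $l\ne m$ and $\boldsymbol 1_{B_\delta(\boldsymbol z_n^-)}$ play no role, while on $B_\delta(\boldsymbol z_m^+)$ one has $(-\Delta_{\mathbb S^2})\Psi_\varepsilon=\frac1{\varepsilon^2}\big(V_{m,\varepsilon}^+-\frac{\kappa_m^+}{2\pi}\ln\frac1\varepsilon\big)_+^\gamma$ directly from the construction of $V_{l,\varepsilon}^\pm,R_{l,\varepsilon}^\pm$. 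Writing $\psi_\varepsilon=\Psi_\varepsilon+\phi_\varepsilon$ and $c_m^+=\frac{\kappa_m^+}{2\pi}\ln\frac1\varepsilon$, the bracket in the first integrand, restricted to $\mathrm{supp}\,X_{m,\varepsilon}^+$, equals
\[
\frac1{\varepsilon^2}\Big[\big(V_{m,\varepsilon}^+-c_m^+\big)_+^\gamma-\big(\psi_\varepsilon+W\cos\theta-\mu_{m,\varepsilon}^+\big)_+^\gamma\Big]-\Delta_{\mathbb S^2}\phi_\varepsilon .
\]

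Next I would Taylor expand $\big(\psi_\varepsilon+W\cos\theta-\mu_{m,\varepsilon}^+\big)_+^\gamma$ about the profile $\big(V_{m,\varepsilon}^+-c_m^+\big)_+$. By the choice of $\mu_{m,\varepsilon}^+$ made before \eqref{2-2}, the constant term of the regular remainder $\psi_\varepsilon+W\cos\theta-\mu_{m,\varepsilon}^+-\big(V_{m,\varepsilon}^+-c_m^+\big)$ cancels, so by \eqref{2-11} in Lemma \ref{lem2-4} this remainder is $\phi_\varepsilon+\mathcal F_m^++O(\varepsilon^2)$. Splitting the pairing with $X_{m,\varepsilon}^+$ accordingly, the $\phi_\varepsilon$-part combines with the $-\Delta_{\mathbb S^2}\phi_\varepsilon$ piece above into $\int_{\mathbb S^2}\phi_\varepsilon\,\mathbb L_\varepsilon X_{m,\varepsilon}^+\,d\boldsymbol\sigma$; exactly as in the estimates $I_1,\dots,I_5$ in the proof of Lemma \ref{lem2-2} (where $\partial_\theta U_{m,\varepsilon}^+$ solves the linearized plasma equation, cf.\ Theorem \ref{thm3}, up to truncation, curvature and $A$-versus-$A_{m,\varepsilon}^+$ errors), this is bounded by $\frac{C}{\varepsilon|\ln\varepsilon|}\|\phi_\varepsilon\|_*=o_\varepsilon(1)$ thanks to \eqref{2-14} of Lemma \ref{lem2-5}. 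The higher-order Taylor remainder, governed by $(1+t)^\gamma=1+\gamma t+O\big(t^{\min(\gamma,2)}\big)$ with $t=O(\varepsilon)$, and the thin annular contribution from the region between $\Omega_{m,\varepsilon}^+$ and $B_{s_{m,\varepsilon}^+}(\boldsymbol z_{m,\varepsilon}^+)$, are likewise $o_\varepsilon(1)$.

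The main term is therefore $-\frac{\gamma}{\varepsilon^2}\int\big(V_{m,\varepsilon}^+-c_m^+\big)_+^{\gamma-1}\mathcal F_m^+\,X_{m,\varepsilon}^+\,d\boldsymbol\sigma$. Recalling from Lemma \ref{lem2-4} that $\mathcal F_m^+$ is affine with coefficient vector $\sum_{i\ne m}\kappa_i^+\nabla G(\boldsymbol z_{m,\varepsilon}^+,\boldsymbol z_{i,\varepsilon}^+)+\kappa_m^+\nabla H(\boldsymbol z_{m,\varepsilon}^+,\boldsymbol z_{m,\varepsilon}^+)-\sum_n\kappa_n^-\nabla G(\boldsymbol z_{m,\varepsilon}^+,\boldsymbol z_{n,\varepsilon}^-)-(W\sin\theta_{m,\varepsilon}^+,0)$, I would rescale by $\mathcal A_{m,\varepsilon}^+$ to the tangent space at $\boldsymbol z_{m,\varepsilon}^+$: then $\big(V_{m,\varepsilon}^+-c_m^+\big)_+^{\gamma-1}\to\big(\tfrac{\varepsilon}{s_{m,\varepsilon}^+}\big)^2 w_\gamma^{\gamma-1}$, $\partial_\theta U_{m,\varepsilon}^+\to\tfrac1{s_{m,\varepsilon}^+}\big(\tfrac{\varepsilon}{s_{m,\varepsilon}^+}\big)^{\frac{2}{\gamma-1}}\partial_{y_1}w_\gamma$, $d\boldsymbol\sigma\to(s_{m,\varepsilon}^+)^2\,d\boldsymbol y$, and the main term becomes
\[
-\gamma\Big(\tfrac{\varepsilon}{s_{m,\varepsilon}^+}\Big)^{\frac{2}{\gamma-1}}\int_{B_1(\boldsymbol 0)}w_\gamma^{\gamma-1}\,\partial_{y_1}w_\gamma\;\frac{\mathcal F_m^+(\boldsymbol y)}{s_{m,\varepsilon}^+}\,d\boldsymbol y+o_\varepsilon(1).
\]
Using $w_\gamma^{\gamma-1}\partial_{y_1}w_\gamma=\tfrac1\gamma\partial_{y_1}\big(w_\gamma^\gamma\big)$, the radial symmetry of $w_\gamma$ to annihilate the $y_2$-moment, and the integration by parts $\int_{B_1}y_1\,\partial_{y_1}\big(w_\gamma^\gamma\big)\,d\boldsymbol y=-\int_{B_1}w_\gamma^\gamma\,d\boldsymbol y$ (the boundary term vanishing because $\gamma>1$ makes $w_\gamma^\gamma$ of class $C^1$ across $\partial B_1$), this collapses to $C_m^+$ times the $\boldsymbol e_\theta$-component of the above vector, where $C_m^+=\big(\tfrac{\varepsilon}{s_{m,\varepsilon}^+}\big)^{\frac{2}{\gamma-1}}\int_{B_1}w_\gamma^\gamma\,d\boldsymbol y$ up to $o_\varepsilon(1)$; by \eqref{2-1} this prefactor converges to a positive limit, so $C_m^+$ is a positive constant. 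This is precisely the asserted identity. The $Y_{m,\varepsilon}^+$-equations follow identically with $\partial_\varphi$ replacing $\partial_\theta$, so that the $-W\sin\theta_{m,\varepsilon}^+$ term disappears (as $\partial_\varphi\cos\theta=0$), and the negative vortices are treated verbatim after the obvious sign changes, with their own positive constants $C_n^\pm$.

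The step I expect to be the main obstacle is the error bookkeeping in the rescaled picture: keeping every remainder genuinely $o_\varepsilon(1)$ against the $\varepsilon^{-2}$ prefactor while $s_{l,\varepsilon}^\pm\sim\varepsilon$. In particular one must handle simultaneously the boundary-layer behaviour near the free boundary $\partial\Omega_{m,\varepsilon}^+$ (corresponding to $\partial B_1$ after rescaling), where the $\min(\gamma,2)$-power nonlinearity and the $O(\varepsilon)$-wide annulus from \eqref{2-11} interact, and the curvature corrections to $\Delta_{\mathbb S^2}$ and $d\boldsymbol\sigma$ together with the mismatch between the metric map $A$ used in $V_{l,\varepsilon}^\pm$ and the frozen map $A_{l,\varepsilon}^\pm$ used in $U_{l,\varepsilon}^\pm$. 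Once these are absorbed, identifying the leading term with the gradient of $\mathcal K_{k+j}$ is purely algebraic.
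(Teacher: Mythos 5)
Your proposal is correct and follows essentially the same route as the paper: you identify the integrand with $\mathbb L_\varepsilon\phi_\varepsilon-N_\varepsilon(\phi_\varepsilon)$, dispose of the linear part via the duality/coercivity estimates of Lemma \ref{lem2-2} together with $\|\phi_\varepsilon\|_*=O(\varepsilon)$, and extract the leading term of the nonlinear part by rescaling to the tangent plane and integrating $y_1\,\partial_{y_1}(w_\gamma^\gamma)$ by parts, exactly as in the paper's treatment of $I_1$ and $I_2$. The only cosmetic difference is that you track the positive constant $C_m^+$ slightly more explicitly through \eqref{2-1} than the paper does.
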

		\begin{proof}
		Notice that
		\begin{align*}
			(-\Delta_{\mathbb S^2})\psi_\varepsilon-\frac{1}{\varepsilon^2}\sum_{m=1}^j\boldsymbol1_{B_\delta(\boldsymbol z_m^+)}(\psi_\varepsilon+W\cos\theta-\mu_{m,\varepsilon}^+)_+^\gamma+\frac{1}{\varepsilon^2}\sum_{n=1}^k\boldsymbol1_{B_\delta(\boldsymbol z_n^-)}(-\psi_\varepsilon-W\cos\theta-\mu_{n,\varepsilon}^-)^\gamma_+\\
			=\mathbb L_\varepsilon\phi_\varepsilon-N_\varepsilon(\phi_\varepsilon).
		\end{align*}
		For simplicity, we consider the following integration for the $m$th positive vortex in $B_\delta(\boldsymbol z_m^+)$.
		\begin{equation*}
			\int_{B_\delta(\boldsymbol z_m^+)}X_{m,\varepsilon}^+\mathbb L_\varepsilon\phi_\varepsilon d\boldsymbol \sigma-\int_{B_\delta(\boldsymbol z_m^+)}X_{m,\ep}^+ N_\varepsilon(\phi_\varepsilon)d\boldsymbol \sigma=I_1-I_2.
		\end{equation*}
		
		For the first part of the integration $I_1$, from Lemma \ref{lem2-3}, we already know that 
		$$I_1=\frac{C}{\varepsilon|\ln\varepsilon|}\|\phi_\varepsilon\|_*=o_\varepsilon(1).$$ 
		For the second part $I_2$, we can use integration by parts to obtain  
		\begin{align*}
			&\int_{B_\delta(\boldsymbol z_m^+)}X_{m,\varepsilon}^+ N_\varepsilon(\phi_\varepsilon)d\boldsymbol \sigma=\frac{1}{\varepsilon^2}\left[\boldsymbol1_{B_\delta(\boldsymbol z_m^+)}\left(\psi_\varepsilon+W_\varepsilon\cos\theta-\mu_{m,\varepsilon}^+\right)_+^\gamma-\left(V^+_{m,\varepsilon}(\boldsymbol z)-\frac{\kappa_m^+}{2\pi}\ln\frac{1}{\varepsilon}\right)_+^\gamma\right.\\
			&\left. \quad-\gamma\left(V^+_{m,\varepsilon}(\boldsymbol z)-\frac{\kappa_m^+}{2\pi}\ln\frac{1}{\varepsilon}\right)_+^{\gamma-1}\phi_\varepsilon\right]X_{m,\varepsilon}^+ d\boldsymbol\sigma\\
			&=\frac{s_{m,\varepsilon}^{+}}{\varepsilon^2}\int_{B_1(\boldsymbol 0)}\gamma (w_\gamma(\boldsymbol y))_+^{\gamma-1}\left[\frac{\mathcal F_m^+}{s_{m,\varepsilon}^+\beta_{m,\varepsilon}^+} s_\ep y_1+O(\varepsilon^2)\right]\partial_{y_1} w_\gamma(\boldsymbol y) d\boldsymbol y+o_\varepsilon(1)\\
			&=-\frac{s_{m,\varepsilon}^{+}}{\varepsilon^2\beta_{m,\varepsilon}^+}\mathcal F_m^+\int_{B_1(\boldsymbol 0)} (w_\gamma(\boldsymbol y))_+^{\gamma}d\boldsymbol y+o_\varepsilon(1).
		\end{align*}
		According to the definition of $\beta_{m,\varepsilon}^+$ in \eqref{2-2} and $\mathcal F_m^+$ in Lemma \ref{lem2-4}, we have 
		\begin{align*}
			I_2=-C_m^+\left[\sum\limits_{l\neq m}^j\kappa_l^+\partial_\theta G(\boldsymbol z_{m,\ep}^+,\boldsymbol z_{l,\ep}^+)+ \kappa_m^+ \partial_\theta H(\boldsymbol z_{m,\ep}^+,\boldsymbol z_{m,\ep}^+)-\sum\limits_{n=1}^k\kappa_n^-\partial_\theta G(\boldsymbol z_{m,\ep}^+,\boldsymbol z_{n,\ep}^-)-W\sin\theta_{m,\ep}^+\right].
		\end{align*}
		
		Since the integration for other vortices is similar to the $m$th positive vortex, we complete the proof of this lemma.
	\end{proof}
	
	\smallskip
	
	\noindent{\bf Proof of Theorem \ref{thm1}:} To obtain a family of desired solutions $\psi_\ep$ to \eqref{1-6}, we need to choose suitable locations $\boldsymbol z_{m,\varepsilon}^+, \boldsymbol z_{n,\varepsilon}^-$ such that $\mathbf \Lambda=\boldsymbol 0$. Notice that 
	\begin{equation*}
		\int_{\mathbb S^2} X_{l,\varepsilon}^\pm(-\Delta_{\mathbb S^2})X_{l,\varepsilon}^\pm d\boldsymbol \sigma>0, \quad \mathrm{and} \quad \int_{\mathbb S^2} Y_{i,\varepsilon}^\pm(-\Delta_{\mathbb S^2})Y_{i,\varepsilon}^\pm d\boldsymbol \sigma>0.
	\end{equation*} 
	Hence by Lemma \ref{lem2-6} and $(\boldsymbol z_1^+,\cdots,\boldsymbol z_j^+, \boldsymbol z_1^-,\cdots, \boldsymbol z_k^-)$ being a nondegenerate critical point of Kirchhoff--Routh function $\mathcal K_{k+j}$ in \eqref{1-5}, we claim that there exists a proper location series
	\begin{equation*}
		\left(\boldsymbol z_{1,\varepsilon}^+,\cdots,\boldsymbol z_{j,\varepsilon}^+, \boldsymbol z_{1,\varepsilon}^-,\cdots, \boldsymbol z_{k,\varepsilon}^-\right)= \left(\boldsymbol z_1^+,\cdots,\boldsymbol z_j^+, \boldsymbol z_1^-,\cdots, \boldsymbol z_k^-\right)+o_\varepsilon(1)
	\end{equation*}
	such that $\mathbf \Lambda=\boldsymbol 0$, which gives the existence and limiting behavior for vortex centers 
	$$(\boldsymbol z_{1,\varepsilon}^+,\cdots,\boldsymbol z_{j,\varepsilon}^+, \boldsymbol z_{1,\varepsilon}^-,\cdots, \boldsymbol z_{k,\varepsilon}^-)$$
	in (ii). While the estimates for vorticity sets $\Omega_{m,\varepsilon}^+$ and $\Omega_{n,\varepsilon}^-$ are from Lemma \ref{lem2-4}, and the radius $s_{l,\varepsilon}^\pm$ for each vortex set satisfies
	$$s_{l,\varepsilon}^\pm=\left(\frac{2\pi w'_\gamma(1)}{\kappa_l^\pm}\right)^{\frac{\gamma-1}{2}} \varepsilon+o(\ep)$$
	by \eqref{2-1}.  Moreover, since $\gamma>1$, the vorticity function $\omega_\ep$ is $C^1$ smooth by the regularity theory of elliptic operator, and $\ep^2\|\phi_\ep\|_{C^2(B_L(\boldsymbol 0))}=O(\ep)$ by bootstrap argument. Then using the implicit functional theorem, we can verify that
	$$\|t_{\varepsilon}\|_{C^2[0,2\pi)}=\frac{\| \tilde\phi_l^\pm+\mathcal F_l^\pm\|_{C^2(B_L(\boldsymbol 0))}}{s_{l,\varepsilon}^\pm\beta_{l,\varepsilon}^\pm+O(\varepsilon)}=O(\varepsilon),$$
	and the curvature of $\partial\Omega_{l,\ep}^\pm$ is
	$$\boldsymbol \kappa_{l,\ep}^\pm= \frac{(1+t_\ep)^2+2t_\ep'^2-(1+t_\ep)t_\ep''}{s_{l,\ep}^\pm[(1+t_\ep)^2+t_\ep''^2]^{\frac{3}{2}}}=\frac{1+O(\ep)}{s_{l,\ep}^\pm}>0,$$
	from which we can claim that $\partial\Omega_{l,\ep}^\pm$ is a $C^2$ convex curve. According to our construction, the convergence for $\psi_\ep$ and $\omega_\varepsilon=(-\Delta_{\mathbb S^2})\psi_\varepsilon$ stated in (i) (ii) is obvious. Hence we have completed the proof.	\qed
	
	\bigskip
	
	\section{Regularization for the case $\gamma=1$}\label{sec3}
	
	For $\gamma=1$, we let $\tau>0$ be the constant such that $1$ is the first eigenvalue of $-\Delta$ in $B_\tau(\boldsymbol 0)$ with the zero Dirichlet boundary condition, and the radial decreasing function $w_1(\boldsymbol y)> 0$ be the first eigenfunction for $-\Delta$ in $B_\tau(\boldsymbol 0)$ with $w_1(\boldsymbol 0)=1$. Then $w_1$ gives the ground state for the classic plasma problem $-\Delta w_1(\boldsymbol y)=(w_1)_+$.
	
	We can approximate the singular part in the Green function by
	\begin{equation*}
		V_{l,\ep}^\pm(\boldsymbol x)=\begin{cases} \frac{\kappa_l^\pm}{2\pi}\ln \frac{1}{ \ep}+\frac{\kappa_l^\pm}{\kappa} \cdot w_1\left(\frac{|A(\boldsymbol z-\boldsymbol z_{l,\ep}^\pm)|}{\ep}\right),\quad &|A(\boldsymbol z-\boldsymbol z_{l,\ep}^\pm)|\le \tau \ep ,\\ 
			\frac{\kappa_l^\pm}{2\pi} \ln\left(\frac{\tau}{|A(\boldsymbol z-\boldsymbol z_{l,\ep}^\pm)|}\right),&|A(\boldsymbol z-\boldsymbol z_{l,\ep}^\pm)|\ge  \tau \ep,\end{cases}
	\end{equation*}
	with $\kappa=\int_{\mathbb R^2} (w_1(\boldsymbol y))_+d\boldsymbol y$, and let
	\begin{equation*}
		R^\pm_{l,\varepsilon}(\boldsymbol z)=\frac{1}{\varepsilon^2}\int_{\{V^\pm_{l,\varepsilon}(\boldsymbol z)>\frac{\kappa_l^\pm}{2\pi}\ln\frac{1}{\varepsilon}\}} H(\theta,\varphi,\theta',\varphi')\left(V^\pm_{l,\varepsilon}-\frac{\kappa_l^\pm}{2\pi}\ln\frac{1}{\varepsilon}\right)_+ d\boldsymbol \sigma(\boldsymbol z').
	\end{equation*} 
	According to the definition of $V^\pm_{i,\varepsilon}(\boldsymbol z)$ and $R^\pm_{i,\varepsilon}(\boldsymbol z)$, it holds 
	\begin{equation*}
		(-\Delta_{\mathbb S^2})\left(V^\pm_{l,\varepsilon}+R^\pm_{l,\varepsilon}\right)=\frac{1}{\varepsilon^2}\left(V^\pm_{l,\varepsilon}(\boldsymbol z)-\frac{\kappa_l^\pm}{2\pi}\ln\frac{1}{\varepsilon}\right)_+.
	\end{equation*}
	Then by the decomposition
	\begin{align*}
		\psi_\varepsilon(\boldsymbol z)&=\sum\limits_{m=1}^jV^+_{m,\varepsilon}+\sum\limits_{m=1}^jR^+_{m,\varepsilon}-\sum\limits_{n=1}^kV^-_{n,\varepsilon}-\sum\limits_{n=1}^kR^-_{n,\varepsilon}+\phi_\varepsilon\\
		&:= \Psi_\varepsilon+\phi_\varepsilon,
	\end{align*}
	we can linearize the equation \eqref{1-6} with $\gamma=1$ as
	\begin{align*}
		0&=-\varepsilon^2\Delta_{\mathbb S^2}\left(\sum\limits_{m=1}^jV^+_{i,\varepsilon}+\sum\limits_{m=1}^jR^+_{i,\varepsilon}-\sum\limits_{n=1}^kV^-_{i,\varepsilon}-\sum\limits_{n=1}^kR^-_{i,\varepsilon}+\phi_\varepsilon\right)\\
		&\quad-\sum\limits_{m=1}^j\boldsymbol1_{B_\delta(\boldsymbol z_m^+)}\left(\psi_\varepsilon+W_\varepsilon\cos\theta-\mu_{m,\varepsilon}^+\right)_++\sum\limits_{n=1}^k\boldsymbol1_{ B_\delta(\boldsymbol z_n^-)}\left(-\psi_\varepsilon-W_\varepsilon\cos\theta-\mu_{n,\varepsilon}^-\right)_+\\
		&=\sum\limits_{m=1}^j\left(-\varepsilon^2\Delta_{\mathbb S^2}\left(V^+_{m,\varepsilon}+R^+_{m,\varepsilon}\right)-\left(V^+_{m,\varepsilon}(\boldsymbol z)-\frac{\kappa_m^+}{2\pi}\ln\frac{1}{\varepsilon}\right)_+\right)\\
		&\quad -\sum\limits_{n=1}^k\left(-\varepsilon^2\Delta_{\mathbb S^2}\left(V^-_{n,\varepsilon}+R^-_{n,\varepsilon}\right)-\left(V^-_{n,\varepsilon}(\boldsymbol z)-\frac{\kappa_n^-}{2\pi}\ln\frac{1}{\varepsilon}\right)_+\right)\\
		& \ \ \ +\left(-\varepsilon^2\Delta_{\mathbb S^2}\phi_\varepsilon-\sum\limits_{m=1}^j \boldsymbol 1_{B_{s_{m,\ep}^+}(\boldsymbol z_{m,\ep}^+)}\phi_\varepsilon-\sum\limits_{n=1}^k\boldsymbol 1_{B_{s_{n,\ep}^-}(\boldsymbol z_{n,\ep}^-)}\phi_\varepsilon\right)\\
		& \ \ \ -\sum\limits_{m=1}^j\left(\boldsymbol1_{B_\delta(\boldsymbol z_m^+)}\left(\psi_\varepsilon+W_\varepsilon\cos\theta-\mu_{m,\varepsilon}^+\right)_+-\left(V^+_{m,\varepsilon}(\boldsymbol z)-\frac{\kappa_m^+}{2\pi}\ln\frac{1}{\varepsilon}\right)_+-\boldsymbol 1_{B_{s_{m,\ep}^+}(\boldsymbol z_{m,\ep}^+)}\phi_\varepsilon\right)\\
		& \ \ \ +\sum\limits_{n=1}^k\left(\boldsymbol1_{ B_\delta(\boldsymbol z_n^-)}\left(-\psi_\varepsilon-W_\varepsilon\cos\theta-\mu_{n,\varepsilon}^-\right)_+-\left(V^-_{n,\varepsilon}(\boldsymbol z)-\frac{\kappa_n^-}{2\pi}\ln\frac{1}{\varepsilon}\right)_++\boldsymbol 1_{B_{s_{n,\ep}^-}(\boldsymbol z_{n,\ep}^-)}\phi_\varepsilon\right)\\
		&=\varepsilon^2\mathbb L_\varepsilon\phi_\varepsilon-\varepsilon^2N_\varepsilon(\phi_\varepsilon),
	\end{align*}
	where
	\begin{equation*}
		\mathbb L_\varepsilon\phi_\varepsilon:=(-\Delta_{\mathbb S^2})\phi_\varepsilon-\frac{1}{\ep^2}\sum\limits_{m=1}^j \boldsymbol 1_{B_{s_{m,\ep}^+}(\boldsymbol z_{m,\ep}^+)}\phi_\varepsilon-\frac{1}{\ep^2}\sum\limits_{n=1}^k\boldsymbol 1_{B_{s_{n,\ep}^-}(\boldsymbol z_{n,\ep}^-)}\phi_\varepsilon
	\end{equation*}
	is the linear term, and
	\begin{footnotesize}
	\begin{align*}
		N_\varepsilon(\phi_\varepsilon)=&\frac{1}{\ep^2}\sum\limits_{m=1}^j\left(\boldsymbol1_{B_\delta(\boldsymbol z_m^+)}\left(\psi_\varepsilon+W_\varepsilon\cos\theta-\mu_{m,\varepsilon}^+\right)_+-\left(V^+_{m,\varepsilon}(\boldsymbol z)-\frac{\kappa_m^+}{2\pi}\ln\frac{1}{\varepsilon}\right)_+-\boldsymbol 1_{B_{s_{m,\ep}^+}(\boldsymbol z_{m,\ep}^+)}\phi_\varepsilon\right)\\
		&-\frac{1}{\ep^2}\sum\limits_{n=1}^k\left(\boldsymbol1_{ B_\delta(\boldsymbol z_n^-)}\left(-\psi_\varepsilon-W_\varepsilon\cos\theta-\mu_{n,\varepsilon}^-\right)_+-\left(V^-_{n,\varepsilon}(\boldsymbol z)-\frac{\kappa_n^-}{2\pi}\ln\frac{1}{\varepsilon}\right)_++\boldsymbol 1_{B_{s_{n,\ep}^-}(\boldsymbol z_{n,\ep}^-)}\phi_\varepsilon\right)
	\end{align*}
	\end{footnotesize}is the nonlinear perturbation. To make $N_\varepsilon(\phi_\varepsilon)$ sufficiently small, the flux constant $\mu_{m,\varepsilon}^+$ and $\mu_{n,\varepsilon}^-$ are chosen the same way as in Section \ref{sec2}. The next step is to deal with the semi-linear problem
	\begin{equation*}
		\mathbb L_\varepsilon\phi_\varepsilon=N_\varepsilon(\phi_\varepsilon),
	\end{equation*}
	where a similar difficulty induced by a non-empty kernel of $\mathbb L_\varepsilon$ as the previous case $\gamma>1$ arises. Fortunately, for the linearized operator
	\begin{equation}\label{3-1}
		\mathbb L_0\phi:=-\Delta\phi-\boldsymbol 1_{B_\tau(\boldsymbol 0)}\phi
	\end{equation}
	of classic plasma problem, we have the following theorem by Flucher and Wei \cite{FW}.
	\begin{theorem}\label{thm4}
		Let $v\in L^\infty(\mathbb R^2)\cap C(\mathbb R^2)$ be a solution to $\mathbb L _0v=0$ with the operator $\mathbb L_0$ defined in \eqref{3-1}. Then
		\begin{equation*}
			v\in\mathrm{span} \left\{\frac{\partial w_1}{\partial y_1},\frac{\partial w_1}{\partial y_2}\right\}.
		\end{equation*}
	\end{theorem}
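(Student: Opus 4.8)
The plan is to exploit the rotational symmetry of $\mathbb L_0$ and reduce the kernel equation to a family of ordinary differential equations indexed by the angular frequency, each of which can be solved explicitly in terms of Bessel functions. Writing $\boldsymbol y=(r\cos\theta,r\sin\theta)$, I would first record that, since $\mathbf 1_{B_\tau(\boldsymbol 0)}v\in L^\infty(\mathbb R^2)$, elliptic regularity gives $v\in W^{2,p}_{\mathrm{loc}}(\mathbb R^2)\hookrightarrow C^{1,\alpha}_{\mathrm{loc}}(\mathbb R^2)$ for every $p<\infty$, and bootstrapping shows $v$ is smooth on $B_\tau(\boldsymbol 0)$ and on $\mathbb R^2\setminus\overline{B_\tau(\boldsymbol 0)}$. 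Expanding $v(r,\theta)=\sum_{k\ge0}\big(a_k(r)\cos k\theta+b_k(r)\sin k\theta\big)$, each coefficient $u=a_k$ or $u=b_k$ is bounded, of class $C^1$ on $(0,\infty)$, smooth away from $r=\tau$, and solves
\begin{equation*}
	u''+\frac1r u'-\frac{k^2}{r^2}u+\mathbf 1_{(0,\tau)}(r)\,u=0
\end{equation*}
classically on $(0,\tau)$ and on $(\tau,\infty)$, with $C^1$ matching at $r=\tau$.

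Next I would solve this ODE under the boundedness constraint. On $(\tau,\infty)$ it is an Euler equation (the radial Laplace equation when $k=0$), whose bounded solutions are the constants for $k=0$ and the multiples of $r^{-k}$ for $k\ge1$; the unbounded branches $\ln r$ and $r^{k}$ are excluded. On $(0,\tau)$ it is Bessel's equation of order $k$, so boundedness at the origin forces $u=c\,J_k(r)$, the second (logarithmic or $r^{-k}$) branch being unbounded. Since $w_1$ is the first Dirichlet eigenfunction of $-\Delta$ on $B_\tau(\boldsymbol 0)$ with $w_1(\boldsymbol 0)=1$, one has $w_1(r)=J_0(r)$ on $B_\tau(\boldsymbol 0)$ and $\tau=j_{0,1}$ is the first positive zero of $J_0$. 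Imposing continuity and $C^1$ matching at $r=\tau$ and using $rJ_k'(r)=rJ_{k-1}(r)-kJ_k(r)$, the compatibility condition collapses, for $k\ge1$, to $c\,J_{k-1}(\tau)=0$ and, for $k=0$, to $c\,J_1(\tau)=0$ (after continuity has already forced the exterior constant to vanish). Now $\tau=j_{0,1}$ is a zero of $J_0$, while, by the classical fact that Bessel functions of distinct nonnegative orders have no positive zero in common, $\tau$ is not a zero of $J_m$ for any $m\ge1$. Hence $a_0\equiv b_0\equiv 0$; $a_k\equiv b_k\equiv0$ for $k\ge2$; and for $k=1$ the condition reads $c\,J_0(\tau)=0$, which holds identically, so the $k=1$ mode has a one-dimensional solution space, spanned by the profile $J_1(r)$ on $B_\tau(\boldsymbol 0)$ matched to a multiple of $r^{-1}$ outside, i.e. a constant multiple of $-w_1'(r)$.

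Reassembling, $v(r,\theta)=w_1'(r)\big(\alpha\cos\theta+\beta\sin\theta\big)=\alpha\,\partial_{y_1}w_1+\beta\,\partial_{y_2}w_1$ for some real $\alpha,\beta$, where $w_1$ is understood as extended harmonically across $\partial B_\tau(\boldsymbol 0)$ so that its translations are bounded on $\mathbb R^2$; this is the assertion. I expect the main obstacle to be not the algebra but the careful bookkeeping around the free boundary $r=\tau$: verifying that every angular Fourier coefficient of $v$ is genuinely a bounded $C^1$ solution of the reduced ODE across $r=\tau$, together with the Bessel-function input that $j_{0,1}$ is not a zero of $J_m$ for $m\ge1$, which is precisely what produces the nondegeneracy and is consistent with the surviving $k=1$ mode being exactly the two-parameter family of infinitesimal translations.
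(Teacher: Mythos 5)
Your argument is correct. Note that the paper itself does not prove this statement: Theorem \ref{thm4} is quoted directly from Flucher and Wei \cite{FW}, and your Fourier--Bessel separation of variables (angular modes reducing to Bessel's equation of order $k$ inside $B_\tau(\boldsymbol 0)$ and an Euler equation outside, with $C^1$ matching at $r=\tau$ collapsing to $c\,J_{k-1}(\tau)=0$) is essentially the standard proof of this nondegeneracy result, consistent with the cited reference. One small simplification: you do not need the Bourget--Siegel fact that distinct-order Bessel functions share no positive zeros; since $\tau=j_{0,1}$ and the first positive zeros $j_{m,1}$ are strictly increasing in $m$, one has $J_m(\tau)>0$ for all $m\ge1$, which is all the modes $k\ne1$ require.
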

	Thus we can define the approximate kernel for $\mathbb L_\ep$ the same way as in Section \ref{sec2}, which is spanned by $X_{m,\varepsilon}^+({\boldsymbol z}), Y_{m,\varepsilon}^+({\boldsymbol z})$ with $1\le m\le j$, and $X_{n,\varepsilon}^-({\boldsymbol z}), Y_{n,\varepsilon}^-({\boldsymbol z})$ with $1\le n\le k$. The projection problem for the case $\gamma=1$ is 
		\begin{equation}\label{3-2}
		\begin{cases}
			\mathbb L_\varepsilon\phi=\mathbf h(\boldsymbol z)+(-\Delta_{\mathbb S^2})\sum_{m=1}^j\left[a_m^+X_{m,\varepsilon}^++b_m^+Y_{m,\varepsilon}^+\right]\\
			\quad\quad\quad\quad\quad\ +(-\Delta_{\mathbb S^2})\sum_{n=1}^k\left[a_n^-X_{n,\varepsilon}^-+b_n^-Y_{n,\varepsilon}^-\right], \ \ &\text{in} \ \mathbb S^2,\\
			\int_{\mathbb S^2}  \phi(\boldsymbol z)(-\Delta_{\mathbb S^2})X_{l,\varepsilon}^\pm(\boldsymbol z) d\boldsymbol \sigma=0, \quad
			\int_{\mathbb S^2}  \phi(\boldsymbol z)(-\Delta_{\mathbb S^2})Y_{i,\varepsilon}^\pm(\boldsymbol z) d\boldsymbol \sigma=0,
		\end{cases}
	\end{equation}
	where $\mathbf h(\boldsymbol z)$ satisfies 
	$$\mathrm{supp}\, \mathbf h(\boldsymbol z)\subset \left(\cup_{m=1}^jB_{L\varepsilon}(\boldsymbol z_{m,\varepsilon}^+)\right)\cup \left(\cup_{n=1}^kB_{L\varepsilon}(\boldsymbol z_{n,\varepsilon}^-)\right)$$
	with $L$ a large positive constant. As in Section \ref{sec2}, we let 
	$$\mathbf\Lambda=(a_1^+,\cdots,a_j^+,b_1^+,\cdots,b_j^+, a_1^-,\cdots,a_k^-,b_1^-,\cdots,b_k^-)$$
	be the $(2j+2k)$-dimensional projection vector determined by
	\begin{footnotesize}
		\begin{equation*}
			a_l^\pm\int_{\mathbb S^2} X_{l,\varepsilon}^\pm(-\Delta_{\mathbb S^2})X_{l,\varepsilon}^\pm d\boldsymbol \sigma=\int_{\mathbb S^2}X_{l,\varepsilon}^\pm\big[\mathbb L_\ep\phi-\mathbf h(\boldsymbol z)\big]d\boldsymbol \sigma, \quad b_l^\pm\int_{\mathbb S^2} Y_{l,\varepsilon}^\pm(-\Delta_{\mathbb S^2})Y_{l,\varepsilon}^\pm d\boldsymbol \sigma=\int_{\mathbb S^2}Y_{l,\varepsilon}^\pm\big[\mathbb L_\ep\phi-\mathbf h(\boldsymbol z)\big]d\boldsymbol \sigma.
		\end{equation*}
	\end{footnotesize}Then the solvability of \eqref{3-2} is a analogy of Lemma \ref{lem2-3}, and we claim that for $\ep>0$ small and $\mathbf\Lambda$ the projection vector, \eqref{3-2} has a unique solution $\phi_\varepsilon=\mathcal T_\varepsilon \, \mathbf h$, such that
	\begin{equation}\label{3-}
		\begin{split}
			\|\phi_\varepsilon\|_*+\varepsilon^{1-\frac{2}{p}}&\|\nabla\phi_\varepsilon\|_{L^p(\left(\cup_{m=1}^jB_{L\varepsilon}(\boldsymbol z_{m,\varepsilon}^+)\right)\cup \left(\cup_{n=1}^kB_{L\varepsilon}(\boldsymbol z_{n,\varepsilon}^-)\right))}\\
			&\le c_0\varepsilon^{1-\frac{2}{p}}\|\mathbf h\|_{W^{-1,p}(\left(\cup_{m=1}^jB_{L\varepsilon}(\boldsymbol z_{m,\varepsilon}^+)\right)\cup \left(\cup_{n=1}^kB_{L\varepsilon}(\boldsymbol z_{n,\varepsilon}^-)\right))},
		\end{split}
	\end{equation}
	with $\|\phi\|_*=\sup_{\mathbb S^2} |\phi|$. Let $\mathbf h(\boldsymbol z)=N_\ep(\phi_\ep)$. We are to estimate $N_\ep(\phi_\ep)$ so that the contraction mapping theorem can be applied to $\phi_\varepsilon=\mathcal T_\varepsilon N_\ep(\phi_\ep)$ and obtain the existence of $\phi_\ep$.
	
	\smallskip
	
	Denote  
	$$\tilde v_m^+(\boldsymbol y)=v(\ep y_1+\theta_{m,\ep}^+,\ep\sin^{-1}(\ep y_1+\theta_{m,\ep}^+)y_2+\varphi_{m,\ep}^+),$$
	and 
	$$\tilde v_n^-(\boldsymbol y)=v(\ep y_1+\theta_{n,\ep}^-,\ep\sin^{-1}(\ep y_1+\theta_{n,\ep}^-)y_2+\varphi_{n,\ep}^-)$$
	for a general function $v$. Using the fact
	\begin{align*}
		&\nabla\left[\tilde\psi_{l,\varepsilon}^\pm(\boldsymbol y)-W\cos(\ep y_1+\theta_{l,\ep}^\pm)-\mu_{l,\varepsilon}^\pm\right]\bigg|_{|\boldsymbol y|=\tau}\\
		=&\nabla\left[\tilde V^\pm_{l,\varepsilon}(\boldsymbol y)-\frac{\kappa_l^\pm}{2\pi}\ln\frac{1}{\varepsilon}\right]\bigg|_{|\boldsymbol y|=\tau}+O(\varepsilon)=\frac{\kappa_l^\pm}{\kappa w_1'(\tau)}+O(\varepsilon),
	\end{align*}
	and the implicit function theorem, we have the following lemma concerning the estimate for level sets 
	$$\{\boldsymbol z\in B_\delta(\boldsymbol z_{m}^+)\mid \psi_\varepsilon(\boldsymbol z)+W\sin\theta_0=\mu_{m,\varepsilon}^+\} \quad \mathrm{and} \quad \{\boldsymbol z\in B_\delta(\boldsymbol z_{n}^-)\mid -\psi_\varepsilon(\boldsymbol z)-W\sin\theta_0=\mu_{n,\varepsilon}^-\}$$
	as the perturbations of $B_{s_{m,\ep}^+}(\boldsymbol z_{m,\ep}^+)$ and $B_{s_{n,\ep}^-}(\boldsymbol z_{m,\ep}^-)$.
	\begin{lemma}\label{lem3-2}
		Suppose that $\phi$ is a function satisfying
		\begin{equation}\label{3-4}
			\|\phi_\varepsilon\|_*+\varepsilon^{1-\frac{2}{p}}\|\nabla\phi_\varepsilon\|_{L^p\left(\left(\cup_{m=1}^jB_{L\varepsilon}(\boldsymbol z_{m,\varepsilon}^+)\right)\cup \left(\cup_{n=1}^kB_{L\varepsilon}(\boldsymbol z_{n,\varepsilon}^-)\right)\right)}= O(\varepsilon)
		\end{equation}
		with $p\in(2,+\infty]$. Then the sets
		$$\tilde{\mathbf\Gamma}_{m,\phi}^+:=\{\boldsymbol y \mid \tilde\Psi_{m,\varepsilon}^+(\boldsymbol y)+\tilde\phi_m^++W\cos(\ep y_1+\theta_{m,\phi}^+)=\mu_{m,\varepsilon}^+\}$$
		and
		$$\tilde{\mathbf\Gamma}_{n,\phi}^-:=\{\boldsymbol y \mid -\tilde\Psi_{n,\varepsilon}^-(\boldsymbol y)-\tilde\phi_n^--W\cos(\ep y_1+\theta_{n,\varepsilon}^-)=\mu_{n,\varepsilon}^-\}$$
		are $C^1$ closed curves in $\mathbb{R}^2$, and
		\begin{equation*}
			\begin{split}
				\tilde{\mathbf\Gamma}_{m,\phi}^+(\xi)&=(\tau+t_\varepsilon(\xi))(\cos\xi,\sin\xi)\\
				&=(\tau+t_{\varepsilon,\phi}(\xi)+t_{\varepsilon,\mathcal F_m^+}(\xi)+O(\varepsilon^2))(\cos\xi,\sin\xi)\\
				&=\left(\tau+\frac{\kappa}{\kappa_m^+w_1'(\tau)}\tilde\phi(\cos\xi,\sin\xi)\right)(\cos\xi,\sin\xi)+\frac{\kappa\mathcal F^+_m(\xi)}{\kappa_m^+w_1'(\tau)}(\cos\theta,0)\\
				&\quad+O(\varepsilon^2), \quad \xi\in (0,2\pi],
			\end{split}
		\end{equation*}
		\begin{equation*}
			\begin{split}
				\tilde{\mathbf\Gamma}_{n,\phi}^-(\xi)&=(\tau+t_\varepsilon(\xi))(\cos\xi,\sin\xi)\\
				&=(\tau+t_{\varepsilon,\phi}(\xi)+t_{\varepsilon,\mathcal F_m^-}(\xi)+O(\varepsilon^2))(\cos\xi,\sin\xi)\\
				&=\left(\tau+\frac{\kappa}{\kappa_n^-w_1'(\tau)}\tilde\phi(\cos\xi,\sin\xi)\right)(\cos\xi,\sin\xi)+\frac{\kappa\mathcal F^-_n(\xi)}{\kappa_n^-w_1'(\tau)}(\cos\theta,0)\\
				&\quad+O(\varepsilon^2), \quad \xi\in (0,2\pi]
			\end{split}
		\end{equation*}
		with $\|t_\varepsilon\|_{C^1[0,2\pi)}=O(\ep)$, and 
		\begin{align*}
			\mathcal F_m^+(\xi)&=\bigg\langle \sum\limits_{l\neq m}^j\kappa_{l}^+\nabla G(\boldsymbol z_{m,\ep}^+,\boldsymbol z^+_{l,\ep})+\kappa_m^+ \nabla H(\boldsymbol z_{m,\ep}^+,\boldsymbol z^+_{m,\ep})-\sum\limits_{n=1}^k\kappa_n^-\nabla G(\boldsymbol z_{m,\ep}^+,\boldsymbol z^-_{l,\ep})\\
			&\quad-(W\sin\theta_{m,\ep}^+,0),\quad (\ep\cos\xi,\ep\sin^{-1}(\ep y_1+\theta_{m,\ep}^+)\sin\xi)\bigg\rangle,
		\end{align*}
		\begin{align*}
			\mathcal F_n^-(\xi)&=\bigg\langle \sum\limits_{l\neq n}^k\kappa_{l}^-\nabla G(\boldsymbol z_{n,\ep}^-,\boldsymbol z^-_{l,\ep})+\kappa_n^- \nabla H(\boldsymbol z_{n,\ep}^-,\boldsymbol z^-_{n,\ep})-\sum\limits_{m=1}^j\kappa_m^+\nabla G(\boldsymbol z_{n,\ep}^-,\boldsymbol z^+_{m,\ep})\\
			&\quad-(W\sin\theta_{m,\ep}^+,0),\quad (\ep\cos\xi,\ep\sin^{-1}(\ep y_1+\theta_{n,\ep}^-)\sin\xi)\bigg\rangle.
		\end{align*}
		Moreover, it holds
		\begin{equation}\label{}
			\left|\tilde{\mathbf\Gamma}_{l,\phi_1}^\pm-\tilde{\mathbf\Gamma}_{l,\phi_2}^\pm\right|=\left(\frac{\kappa}{\kappa_{l}^\pm w'_1(\tau)}+O(\varepsilon)\right)\|\tilde\phi_{l,1}^\pm-\tilde\phi_{l,2}^\pm\|_{L^\infty(B_L(\boldsymbol 0))}
		\end{equation}
		for two functions $\phi_1,\phi_2$ satisfying \eqref{2-10}.
	\end{lemma}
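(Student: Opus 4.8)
The plan is to repeat the argument of Lemma~\ref{lem2-4} almost verbatim, with the ground state $w_\gamma$ on $B_1(\boldsymbol 0)$ replaced by the first eigenfunction $w_1$ on $B_\tau(\boldsymbol 0)$. I would first reduce to the positive vortex $\tilde{\mathbf\Gamma}_{m,\phi}^+$; the negative case is obtained identically, with the signs inherited from $-\tilde\Psi_{n,\varepsilon}^-$. The key structural fact is that the unperturbed level set of the approximate solution is, in the scaled variable, \emph{exactly} $\{|\boldsymbol y|=\tau\}$: by the definition of $V_{m,\varepsilon}^+$ in this section, $\tilde V_{m,\varepsilon}^+-\frac{\kappa_m^+}{2\pi}\ln\frac1\varepsilon$ equals $\frac{\kappa_m^+}{\kappa}w_1(|\boldsymbol y|)$ for $|\boldsymbol y|\le\tau$ and $\frac{\kappa_m^+}{2\pi}\ln(\tau/|\boldsymbol y|)$ for $|\boldsymbol y|\ge\tau$, so it is positive inside $B_\tau(\boldsymbol 0)$, negative outside, and vanishes on $\partial B_\tau(\boldsymbol 0)$. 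Using the choice of $\mu_{m,\varepsilon}^+$ (made exactly as in Section~\ref{sec2}) and Taylor expanding the regular remainder $R_{m,\varepsilon}^+$, the interaction terms $\sum_{l\ne m}\kappa_l^+G(\cdot,\boldsymbol z_{l,\varepsilon}^+)$ and $\sum_n\kappa_n^-G(\cdot,\boldsymbol z_{n,\varepsilon}^-)$, and $W\cos\theta$ around $\boldsymbol z_{m,\varepsilon}^+$, the defining relation of $\tilde{\mathbf\Gamma}_{m,\phi}^+$ becomes
\[
\Big(\tilde V_{m,\varepsilon}^+-\tfrac{\kappa_m^+}{2\pi}\ln\tfrac1\varepsilon\Big)+\tilde\phi_m^++\mathcal F_m^++O(\varepsilon^2)=0,
\]
where the first term vanishes on $|\boldsymbol y|=\tau$ and $\tilde\phi_m^++\mathcal F_m^+=O(\varepsilon)$ in $C^1(B_L(\boldsymbol 0))$ — here $\mathcal F_m^+$ is the first-order background contribution listed in the statement, and the $C^1$ bound on $\tilde\phi_m^+$ uses \eqref{3-4} together with the elliptic bootstrap for $\phi_\varepsilon$ (the right-hand side of the $\phi_\varepsilon$-equation is bounded, so $\phi_\varepsilon\in W^{2,p}_{\mathrm{loc}}\hookrightarrow C^{1,\alpha}$).

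The decisive point is the transversality needed to run the implicit function theorem. The radial derivative of $\tilde V_{m,\varepsilon}^+-\frac{\kappa_m^+}{2\pi}\ln\frac1\varepsilon$ at $|\boldsymbol y|=\tau$ is $\frac{\kappa_m^+}{\kappa}w_1'(\tau)$ computed from inside and $-\frac{\kappa_m^+}{2\pi\tau}$ computed from outside; the two agree because $\kappa=\int_{\mathbb R^2}(w_1)_+=-\int_{B_\tau(\boldsymbol 0)}\Delta w_1=-2\pi\tau\,w_1'(\tau)$, and they are nonzero because $w_1$, being a positive first eigenfunction, satisfies $w_1'(\tau)<0$ by the Hopf lemma. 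Hence the full left-hand side above has radial derivative $\frac{\kappa_m^+w_1'(\tau)}{\kappa}+O(\varepsilon)$, bounded away from zero near $|\boldsymbol y|=\tau$, and the implicit function theorem (in the $C^1$ category) produces a unique $C^1$ graph $\xi\mapsto(\tau+t_\varepsilon(\xi))(\cos\xi,\sin\xi)$ with
\[
\|t_\varepsilon\|_{C^1[0,2\pi)}=\frac{\|\tilde\phi_m^++\mathcal F_m^+\|_{C^1(B_L(\boldsymbol 0))}}{\big|\frac{\kappa_m^+w_1'(\tau)}{\kappa}\big|+O(\varepsilon)}=O(\varepsilon).
\]
Splitting the linear part of the implicit solution into its $\tilde\phi_m^+$ and $\mathcal F_m^+$ contributions, with the quadratic cross terms and the curvature correction of the curve absorbed into $O(\varepsilon^2)$, gives $t_\varepsilon=t_{\varepsilon,\phi}+t_{\varepsilon,\mathcal F_m^+}+O(\varepsilon^2)$ with leading coefficient $\frac{\kappa}{\kappa_m^+w_1'(\tau)}$, which is the claimed expansion; the formula for $\tilde{\mathbf\Gamma}_{n,\phi}^-$ follows identically with $m\to n$.

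For the Lipschitz bound I would apply the same implicit function theorem to two data $\phi_1,\phi_2$ satisfying \eqref{2-10}: in the defining relation only the term $\tilde\phi_m^+$ changes, and the implicit solution depends on it linearly to leading order with slope the reciprocal of the radial derivative, so
\[
\big|\tilde{\mathbf\Gamma}_{m,\phi_1}^+-\tilde{\mathbf\Gamma}_{m,\phi_2}^+\big|=\Big(\frac{\kappa}{\kappa_m^+w_1'(\tau)}+O(\varepsilon)\Big)\|\tilde\phi_{m,1}^+-\tilde\phi_{m,2}^+\|_{L^\infty(B_L(\boldsymbol 0))},
\]
and likewise for the negative vortices. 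I expect the only genuine subtlety — the place where this differs from Lemma~\ref{lem2-4} — to be that $w_1$, and hence $V_{m,\varepsilon}^\pm$, is merely $C^1$ (not $C^2$) across $|\boldsymbol y|=\tau$, because the linearised nonlinearity $\boldsymbol 1_{B_\tau(\boldsymbol 0)}$ is discontinuous there; thus the implicit function theorem must be run in $C^1$, relying on the one-sided matching of the radial derivative across the interface rather than on a $C^2$ bound, and it is precisely the $C^1$-continuity together with the non-vanishing of that derivative that keeps the level curve $C^1$. Everything else is the routine Taylor expansion already carried out in the proof of Lemma~\ref{lem2-4}.
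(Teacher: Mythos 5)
Your proposal is correct and follows essentially the same route as the paper, which proves this lemma by recording the radial derivative of $\tilde V^\pm_{l,\varepsilon}-\frac{\kappa_l^\pm}{2\pi}\ln\frac{1}{\varepsilon}$ on $|\boldsymbol y|=\tau$ and then invoking the implicit function theorem exactly as in Lemma~\ref{lem2-4}. Your verification that $\kappa=-2\pi\tau\,w_1'(\tau)$ (so the inner and outer radial derivatives match and the transversality constant is $\frac{\kappa_l^\pm w_1'(\tau)}{\kappa}$, whose reciprocal is the coefficient appearing in the expansion) is the correct reading of the identity the paper states in abbreviated form.
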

	
	Then we can verify the existence by the contraction mapping theorem.
	
	\begin{lemma}\label{lem3-3}
		There exists a small $\varepsilon_0>0$ such that for any $\varepsilon\in(0,\varepsilon_0]$, there is a unique solution $\phi_\varepsilon\in E_\varepsilon$ to \eqref{2-4}. Moreover  $\phi_\varepsilon$ satisfies
		\begin{equation}\label{3-6}
			\|\phi_\varepsilon\|_*+\varepsilon^{1-\frac{2}{p}}\|\nabla\phi_\varepsilon\|_{L^p\left( \left(\cup_{m=1}^jB_{L\varepsilon}(\boldsymbol z_{m,\varepsilon}^+)\right)\cup \left(\cup_{n=1}^kB_{L\varepsilon}(\boldsymbol z_{n,\varepsilon}^-)\right)\right)}= O(\varepsilon)
		\end{equation}
		for $p\in(2,+\infty]$.
	\end{lemma}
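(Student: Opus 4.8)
The plan is to run the same Lyapunov--Schmidt/contraction scheme as in the proof of Lemma \ref{lem2-5}, with the linear theory of Lemma \ref{lem2-3} replaced by its analogue \eqref{3-} and the free--boundary expansion of Lemma \ref{lem2-4} replaced by Lemma \ref{lem3-2}. Set $\mathcal G_\varepsilon:=\mathcal T_\varepsilon N_\varepsilon$, write $\|\cdot\|_{\mathcal G_\varepsilon}:=\|\cdot\|_*+\varepsilon^{1-\frac2p}\|\nabla(\cdot)\|_{L^p}$ over the union of the balls $B_{L\varepsilon}(\boldsymbol z_{l,\varepsilon}^\pm)$, and let $\mathcal B_\varepsilon\subset E_\varepsilon$ be the ball $\{\phi:\|\phi\|_{\mathcal G_\varepsilon}\le C\varepsilon\}$ with $C$ large and fixed and $p\in(2,\infty]$. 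By \eqref{3-}, for $\phi\in\mathcal B_\varepsilon$ one has $\|\mathcal G_\varepsilon\phi\|_{\mathcal G_\varepsilon}\le c_0\,\varepsilon^{1-\frac2p}\|N_\varepsilon(\phi)\|_{W^{-1,p}}$ over the same union, so everything reduces to two estimates: the smallness bound $\varepsilon^{1-\frac2p}\|N_\varepsilon(\phi)\|_{W^{-1,p}}=O(\varepsilon)$ for $\phi\in\mathcal B_\varepsilon$, and the Lipschitz bound $\varepsilon^{1-\frac2p}\|N_\varepsilon(\phi_1)-N_\varepsilon(\phi_2)\|_{W^{-1,p}}=o_\varepsilon(1)\,\|\phi_1-\phi_2\|_{\mathcal G_\varepsilon}$ for $\phi_1,\phi_2\in\mathcal B_\varepsilon$. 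Granting these, $\mathcal G_\varepsilon$ maps $\mathcal B_\varepsilon$ into itself and is a contraction, and the contraction mapping theorem on $(\mathcal B_\varepsilon,\|\cdot\|_{\mathcal G_\varepsilon})$ produces a unique fixed point $\phi_\varepsilon=\mathcal G_\varepsilon\phi_\varepsilon$ satisfying \eqref{3-6}.

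For the smallness bound I would rescale near $\boldsymbol z_{m,\varepsilon}^+$ by $\tilde v_m^+$ and test $N_\varepsilon(\phi)$ against $\zeta\in C_0^\infty(B_L(\boldsymbol 0))$. Writing $\tilde\psi_{m,\varepsilon}^++W\cos(\varepsilon y_1+\theta_{m,\varepsilon}^+)-\mu_{m,\varepsilon}^+=a+b$ with $a=\tilde V_{m,\varepsilon}^+-\tfrac{\kappa_m^+}{2\pi}\ln\tfrac1\varepsilon$ and $b=\tilde\phi_m^++\mathcal F_m^++O(\varepsilon^2)$, and using the elementary fact that for the linear nonlinearity $(a+b)_+-a_+-\boldsymbol 1_{\{a>0\}}b$ is supported where $a$ and $a+b$ have opposite signs and is bounded there by $\min(|a|,|b|)$, the term $N_\varepsilon(\phi)$ (restricted to $B_\delta(\boldsymbol z_m^+)$) decomposes into the linear part already built into $\mathbb L_\varepsilon$, the regular contribution $\tfrac1{\varepsilon^2}\boldsymbol 1_{B_\tau(\boldsymbol 0)}\mathcal F_m^+$, and a free--boundary discrepancy supported in the thin region between $B_\tau(\boldsymbol 0)$ and the curve $\tilde{\mathbf\Gamma}_{m,\phi}^+$. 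On that region $(\tilde\psi_{m,\varepsilon}^++W\cos(\varepsilon y_1+\theta_{m,\varepsilon}^+)-\mu_{m,\varepsilon}^+)_+$ vanishes linearly and, since $\|t_\varepsilon\|_{C^1[0,2\pi)}=O(\varepsilon)$ by Lemma \ref{lem3-2}, this piece is negligible compared with the $\mathcal F_m^+$ contribution, which is itself $O(\varepsilon)$ because $\mathcal F_m^+=O(\varepsilon)$. Summing over all positive and negative vortices gives $\varepsilon^{1-\frac2p}\|N_\varepsilon(\phi)\|_{W^{-1,p}}=O(\varepsilon)$, hence $\mathcal G_\varepsilon(\mathcal B_\varepsilon)\subset\mathcal B_\varepsilon$.

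The contraction estimate is the main obstacle, since for $\gamma=1$ the nonlinearity $(\cdot)_+$ is only Lipschitz, so there is no Taylor remainder as in the case $\gamma>1$ and the gain has to come entirely from the motion of the free boundary with $\phi$. Subtracting the two rescaled copies of $N_\varepsilon$, the difference of the cut--off linear terms $\tfrac1{\varepsilon^2}\boldsymbol 1_{B_\tau(\boldsymbol 0)}(\tilde\phi_{m,1}^+-\tilde\phi_{m,2}^+)$ cancels the interior part of $(\tilde\psi_{m,\varepsilon}^++\tilde\phi_{m,1}^++\cdots)_+-(\tilde\psi_{m,\varepsilon}^++\tilde\phi_{m,2}^++\cdots)_+$, because both level sets agree with $B_\tau(\boldsymbol 0)$ up to an $O(\varepsilon)$ strip; the $\mathcal F$--terms are independent of $\phi$ and drop out, leaving an integral over the symmetric difference of the domains bounded by $\tilde{\mathbf\Gamma}_{m,\phi_1}^+$ and $\tilde{\mathbf\Gamma}_{m,\phi_2}^+$ of an integrand controlled by the distance to the corresponding free boundary. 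By the Lipschitz bound of Lemma \ref{lem3-2}, namely $\big|\tilde{\mathbf\Gamma}_{l,\phi_1}^\pm-\tilde{\mathbf\Gamma}_{l,\phi_2}^\pm\big|=\big(\tfrac{\kappa}{\kappa_l^\pm w_1'(\tau)}+O(\varepsilon)\big)\|\tilde\phi_{l,1}^\pm-\tilde\phi_{l,2}^\pm\|_{L^\infty(B_L(\boldsymbol 0))}$, this symmetric difference has measure $O(\|\phi_1-\phi_2\|_*)$ and the integrand on it is $O(\|\phi_1-\phi_2\|_*)$, so after the factor $\varepsilon^{-2}$, the Jacobian $\varepsilon^2$ of the rescaling and pairing with $\zeta$ one obtains $\varepsilon^{1-\frac2p}\|N_\varepsilon(\phi_1)-N_\varepsilon(\phi_2)\|_{W^{-1,p}}=o_\varepsilon(1)\,\|\phi_1-\phi_2\|_{\mathcal G_\varepsilon}$. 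Therefore $\|\mathcal G_\varepsilon\phi_1-\mathcal G_\varepsilon\phi_2\|_{\mathcal G_\varepsilon}=o_\varepsilon(1)\,\|\phi_1-\phi_2\|_{\mathcal G_\varepsilon}$, $\mathcal G_\varepsilon$ is a contraction of $\mathcal B_\varepsilon$, and its fixed point $\phi_\varepsilon$ satisfies \eqref{3-6}, which completes the argument.
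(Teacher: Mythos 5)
Your proposal is correct and follows essentially the same route as the paper: the identical contraction-mapping setup on the ball $\mathcal B_\varepsilon$ with the norm $\|\cdot\|_{\mathcal G_\varepsilon}$, reduction via the linear estimate \eqref{3-} to the smallness and Lipschitz bounds on $N_\varepsilon$, and in both bounds the gain coming from the fact that the discrepancy between the truncated nonlinearity and its linearization is supported in thin annular neighbourhoods of $|\boldsymbol y|=\tau$ controlled by Lemma \ref{lem3-2}. You also correctly isolate the one point where $\gamma=1$ differs from $\gamma>1$ (no Taylor remainder, so the contraction factor must come entirely from the free-boundary displacement), which is exactly how the paper closes the argument.
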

	\begin{proof}
		Denote $\mathcal G_\varepsilon:=\mathcal T_\varepsilon R_\varepsilon$, and a neighborhood of origin in $E_\varepsilon$ as
		\begin{equation*}
			\mathcal B_\varepsilon:=E_\varepsilon\cap \left\{\phi \ | \ \|\phi\|_*+\varepsilon^{1-\frac{2}{p}}\|\nabla\phi\|_{L^p\left( \left(\cup_{m=1}^jB_{L\varepsilon}(\boldsymbol z_{m,\varepsilon}^+)\right)\cup \left(\cup_{n=1}^kB_{L\varepsilon}(\boldsymbol z_{n,\varepsilon}^-)\right)\right)}\le C\varepsilon, \ p\in(2,\infty]\right\}
		\end{equation*}
		with $C$ a large positive constant. Then for $\phi\in\mathcal B_\varepsilon$, we have
		\begin{align*}
			\|\mathcal T_\varepsilon N_\varepsilon(\phi)\|_*+\varepsilon^{1-\frac{2}{p}}\|\nabla\mathcal T_\varepsilon N_\varepsilon(\phi)&\|_{L^p\left( \left(\cup_{m=1}^jB_{L\varepsilon}(\boldsymbol z_{m,\varepsilon}^+)\right)\cup \left(\cup_{n=1}^kB_{L\varepsilon}(\boldsymbol z_{n,\varepsilon}^-)\right)\right)}\\
			&\le c_0\varepsilon^{1-\frac{2}{p}}\|N_\varepsilon(\phi) \|_{W^{-1,p}\left( \left(\cup_{m=1}^jB_{L\varepsilon}(\boldsymbol z_{m,\varepsilon}^+)\right)\cup \left(\cup_{n=1}^kB_{L\varepsilon}(\boldsymbol z_{n,\varepsilon}^-)\right)\right)}.
		\end{align*}
		
		\smallskip
		
		 To verify the continuity of $\mathcal G_\ep$, we only consider $N_\varepsilon(\phi_\varepsilon)$ restricted in $B_\delta(\boldsymbol z_{m,\ep}^+)$ for simplicity. Let
		 \begin{equation*}
		 	d_\ep=\sup_{\theta\in [0,2\pi)} |t_\ep(\theta)|
		 \end{equation*}   
		 with $t_\ep$ is the parametrization of the boundary $\partial \Om_{m,\ep}^+$ defined by Lemma \ref{lem3-2}. For each $\zeta(\boldsymbol y)\in C_0^\infty(B_{L}(\boldsymbol 0))$, by the estimate for $\Omega_{m,\varepsilon}^+$ in lemma \ref{lem3-2}, direct calculation yields
		\begin{align*}
			&\quad\int_{B_L(\boldsymbol 0)}\left[\left(\tilde\psi_{m,\varepsilon}^++W\cos(\ep y_1+\theta_{m,\varepsilon}^+)-\mu_{m,\varepsilon}^+\right)_+-\left(\tilde V_{m,\varepsilon}^+-\frac{\kappa_m^+}{2\pi}\ln\frac{1}{\varepsilon}\right)_+\right]\zeta d\boldsymbol y-\int_{B_\tau(\boldsymbol 0)}\phi \zeta d\boldsymbol y\\
			&=\int_{B_{\tau+d_\ep}(\boldsymbol 0)\setminus B_{\tau-d_\ep}(\boldsymbol 0)}\left[\phi_\ep+\mathcal F_m^++O(\ep^2)\right]\zeta d\boldsymbol y\\
			&=O(\varepsilon)\cdot\|\zeta\|_{W^{1,p'}(B_{L}(\boldsymbol 0)}.
		\end{align*}
		Hence it holds
		\begin{equation*}
			\varepsilon^{1-\frac{2}{p}}\|N_\varepsilon(\phi)\|_{W^{-1,p}\left( \left(\cup_{m=1}^jB_{L\varepsilon}(\boldsymbol z_{m,\varepsilon}^+)\right)\cup \left(\cup_{n=1}^kB_{L\varepsilon}(\boldsymbol z_{n,\varepsilon}^-)\right)\right)}=O(\varepsilon),
		\end{equation*}
		and
		\begin{equation*}
			\|\mathcal T_\varepsilon N_\varepsilon(\phi)\|_*+\varepsilon^{1-\frac{2}{p}}\|\nabla\mathcal T_\varepsilon N_\varepsilon(\phi)\|_{L^p\left( \left(\cup_{m=1}^jB_{L\varepsilon}(\boldsymbol z_{m,\varepsilon}^+)\right)\cup \left(\cup_{n=1}^kB_{L\varepsilon}(\boldsymbol z_{n,\varepsilon}^-)\right)\right)}=O(\varepsilon)
		\end{equation*}
		for $p\in(2,+\infty]$, and the first step is finished.
		
		\smallskip
		
		The next step is to verify that $\mathcal G_\varepsilon$ is a contraction mapping under the norm
		\begin{equation*}
			\|\cdot\|_{\mathcal G_\varepsilon}=\|\cdot\|_*+\varepsilon^{1-\frac{2}{p}}\|\cdot\|_{W^{1,p}\left( \left(\cup_{m=1}^jB_{L\varepsilon}(\boldsymbol z_{m,\varepsilon}^+)\right)\cup \left(\cup_{n=1}^kB_{L\varepsilon}(\boldsymbol z_{n,\varepsilon}^-)\right)\right)}, \quad p\in(2,+\infty].
		\end{equation*}
		Let $\phi_1$ and $\phi_2$ be two functions in $\mathcal B_\varepsilon$, we have
		\begin{equation*}
			\|\mathcal G_\varepsilon\phi_1-\mathcal G_\varepsilon\phi_2\|_{\mathcal G_\varepsilon}\le C\varepsilon^{1-\frac{2}{p}}\|N_\varepsilon^+(\phi_1)-N_\varepsilon^+(\phi_2) \|_{W^{-1,p}\left( \left(\cup_{m=1}^jB_{L\varepsilon}(\boldsymbol z_{m,\varepsilon}^+)\right)\cup \left(\cup_{n=1}^kB_{L\varepsilon}(\boldsymbol z_{n,\varepsilon}^-)\right)\right)},
		\end{equation*}
		where
		\begin{align*}
			&\quad N_\varepsilon(\phi_1)-N_\varepsilon(\phi_2)\\
			&=\frac{1}{\varepsilon^2}\sum_{m=1}^j\boldsymbol 1_{B_\delta(\boldsymbol z_m^+)}\big[(\Psi_\varepsilon+\phi_1+W\cos\theta>\mu_{m,\varepsilon}^+)_+-(\Psi_\varepsilon+\phi_2+W\cos\theta>\mu_{m,\varepsilon}^+)_+\big]\\
			&\quad -\frac{1}{\varepsilon^2}\sum_{m=1}^j\boldsymbol 1_{B_{\tau\ep}(\boldsymbol z_{m,\ep}^+)}\cdot(\phi_1-\phi_2)\\
			&\quad-\frac{1}{\varepsilon^2}\sum_{n=1}^k\boldsymbol 1_{B_\delta(\boldsymbol z_n^-)}\big[(-\Psi_\varepsilon-\phi_1-W\cos\theta>\mu_{n,\varepsilon}^-)_+-(-\Psi_\varepsilon-\phi_2-W\cos\theta>\mu_{n,\varepsilon}^-)_+\big]\\
			&\quad -\frac{1}{\varepsilon^2}\sum_{n=1}^k\boldsymbol 1_{B_{\tau\ep}(\boldsymbol z_{n,\ep}^-)}(\phi_1-\phi_2).
		\end{align*}
		Then for each $\zeta\in C_0^\infty(B_L(\boldsymbol 0))$, it holds
		\begin{align*}
			&\quad\,\, \int_{B_L(\boldsymbol 0)}\left[(\tilde\Psi_{m,\varepsilon}^++\tilde\phi_{m,1}^++W\cos(\ep y_1+\theta_{m,\ep}^+)>\mu_{m,\varepsilon}^+)_+\right.\\
			&\left.\quad\quad-(\tilde\Psi_{m,\varepsilon}^++\tilde\phi_{m,2}^++W\cos(\ep y_1+\theta_{m,\ep}^+)>\mu_{m,\varepsilon}^+)_+\right]\zeta d\boldsymbol y\\
			&=\int_{B_\tau(\boldsymbol 0)} (\phi_1-\phi_2)\zeta d\boldsymbol y\\
			&\quad\quad+\int_{B_{1+t_{\ep,\phi_1}}(\boldsymbol 0)\setminus B_{1+t_{\ep,\phi_2}}(\boldsymbol 0)}\left(\tilde\Psi_{m,\varepsilon}^++W\cos(\ep y_1+\theta_{m,\varepsilon}^+)-\mu_{m,\varepsilon}^+\right)_+\zeta d\boldsymbol y\\
			&\quad\quad +\int_{B_{1+t_{\ep,\phi_1}}(\boldsymbol 0)\setminus B_\tau(\boldsymbol 0)}\phi_1\zeta d\boldsymbol y-\int_{B_{1+t_{\ep,\phi_2}}(\boldsymbol 0)\setminus B_\tau(\boldsymbol 0)}\phi_2\zeta d\boldsymbol y\\
			&=\int_{B_\tau(\boldsymbol 0)} (\phi_1-\phi_2)\zeta d\boldsymbol y+o_\ep(1)\cdot\|\tilde\phi_1-\tilde\phi_2\|_{L^\infty(B_L(\boldsymbol 0))}\|\zeta\|_{W^{1,p'}(B_L(\boldsymbol 0))}.
		\end{align*}
		Thus we have
		\begin{align*}
			&\quad\,\, \int_{B_L(\boldsymbol 0)}\left[(\tilde\Psi_{m,\varepsilon}^++\tilde\phi_{m,1}^++W\cos(\ep y_1+\theta_{m,\ep}^+)>\mu_{m,\varepsilon}^+)_+\right.\\
			&\left.\quad\quad-(\tilde\Psi_{m,\varepsilon}^++\tilde\phi_{m,2}^++W\cos(\ep y_1+\theta_{m,\ep}^+)>\mu_{m,\varepsilon}^+)_+\right]\zeta d\boldsymbol y-\int_{B_\tau(\boldsymbol 0)}(\phi_1-\phi_2)\zeta d\boldsymbol y\\
			&=o_\ep(1)\|\tilde\phi_1-\tilde\phi_2\|_{L^\infty(B_L(\boldsymbol 0))}||\zeta ||_{W^{1,p'}(B_L(\boldsymbol 0))}.
		\end{align*}
		By dealing with other vortices using the same procedure, we conclude that
		\begin{equation*}
			\varepsilon^{1-\frac{2}{p}}\|N_\varepsilon(\phi_1)-N_\varepsilon(\phi_2) \|_{W^{-1,p}\left( \left(\cup_{m=1}^jB_{L\varepsilon}(\boldsymbol z_{m,\varepsilon}^+)\right)\cup \left(\cup_{n=1}^kB_{L\varepsilon}(\boldsymbol z_{n,\varepsilon}^-)\right)\right)}=o_\varepsilon(1)\cdot \|\phi_1-\phi_2\|_{\mathcal G_\varepsilon},
		\end{equation*}
		and
		\begin{equation*}
			\|\mathcal G_\varepsilon\phi_1-\mathcal G_\varepsilon\phi_2\|_{\mathcal G_\varepsilon}=o_\varepsilon(1) \|\phi_1-\phi_2\|_{\mathcal G_\varepsilon}.
		\end{equation*}
		Hence we have shown that $\mathcal G_\varepsilon$ is a contraction map from $\mathcal B_\varepsilon$ into itself, and obtain a unique $\phi_\varepsilon\in \mathcal B_\varepsilon$ such that $\phi_\varepsilon=\mathcal G_\varepsilon\phi_\varepsilon$, which satisfies \eqref{3-6}. 
	\end{proof}
	
	To ensure $\psi_\ep=\Psi+\phi_\ep$ a solution to \eqref{1-6} with $\gamma=1$, we need to choose the vortex centers $$(\boldsymbol z_{1,\varepsilon}^+,\cdots,\boldsymbol z_{j,\varepsilon}^+, \boldsymbol z_{1,\varepsilon}^-,\cdots, \boldsymbol z_{k,\varepsilon}^-)$$
	so that $\mathbf \Lambda=\boldsymbol 0$. By multiplying \eqref{3-2} with $X_{l,\varepsilon}^\pm$, $Y_{i,\varepsilon}^\pm$ and integrating on $\mathbb S^2$,  we have 
	\begin{footnotesize}
		\begin{align*}
			&\int_{\mathbb S^2}\left[(-\Delta_{\mathbb S^2})\psi_\varepsilon-\frac{1}{\varepsilon^2}\sum_{m=1}^j\boldsymbol1_{B_\delta(\boldsymbol z_m^+)}(\psi_\varepsilon+W\cos\theta-\mu_{m,\varepsilon}^+)_++\frac{1}{\varepsilon^2}\sum_{n=1}^k\boldsymbol1_{B_\delta(\boldsymbol z_n^-)}(-\psi_\varepsilon-W\cos\theta-\mu_{n,\varepsilon}^-)_+\right]X_{m,\varepsilon}^+ d\boldsymbol\sigma\\
			&=a_m^+\int_{\mathbb S^2} X_{m,\varepsilon}^+(-\Delta_{\mathbb S^2})X_{m,\varepsilon}^+ d\boldsymbol \sigma,
		\end{align*}
		\begin{align*}
			&\int_{\mathbb S^2}\left[(-\Delta_{\mathbb S^2})\psi_\varepsilon-\frac{1}{\varepsilon^2}\sum_{m=1}^j\boldsymbol1_{B_\delta(\boldsymbol z_m^+)}(\psi_\varepsilon+W\cos\theta-\mu_{m,\varepsilon}^+)_++\frac{1}{\varepsilon^2}\sum_{n=1}^k\boldsymbol1_{B_\delta(\boldsymbol z_n^-)}(-\psi_\varepsilon-W\cos\theta-\mu_{n,\varepsilon}^-)_+\right]Y_{m,\varepsilon}^+ d\boldsymbol\sigma\\
			&=b_m^+\int_{\mathbb S^2} Y_{m,\varepsilon}^+(-\Delta_{\mathbb S^2})Y_{m,\varepsilon}^+ d\boldsymbol \sigma,
		\end{align*}
		\begin{align*}
			&\int_{\mathbb S^2}\left[(-\Delta_{\mathbb S^2})\psi_\varepsilon-\frac{1}{\varepsilon^2}\sum_{m=1}^j\boldsymbol1_{B_\delta(\boldsymbol z_m^+)}(\psi_\varepsilon+W\cos\theta-\mu_{m,\varepsilon}^+)_++\frac{1}{\varepsilon^2}\sum_{n=1}^k\boldsymbol1_{B_\delta(\boldsymbol z_n^-)}(-\psi_\varepsilon-W\cos\theta-\mu_{n,\varepsilon}^-)_+\right]X_{n,\varepsilon}^- d\boldsymbol\sigma\\
			&=a_n^-\int_{\mathbb S^2} X_{n,\varepsilon}^-(-\Delta_{\mathbb S^2})X_{n,\varepsilon}^- d\boldsymbol \sigma,
		\end{align*}
		\begin{align*}
			&\int_{\mathbb S^2}\left[(-\Delta_{\mathbb S^2})\psi_\varepsilon-\frac{1}{\varepsilon^2}\sum_{m=1}^j\boldsymbol1_{B_\delta(\boldsymbol z_m^+)}(\psi_\varepsilon+W\cos\theta-\mu_{m,\varepsilon}^+)_++\frac{1}{\varepsilon^2}\sum_{n=1}^k\boldsymbol1_{B_\delta(\boldsymbol z_n^-)}(-\psi_\varepsilon-W\cos\theta-\mu_{n,\varepsilon}^-)_+\right]Y_{n,\varepsilon}^- d\boldsymbol\sigma\\
			&=b_n^-\int_{\mathbb S^2} Y_{n,\varepsilon}^-(-\Delta_{\mathbb S^2})Y_{n,\varepsilon}^- d\boldsymbol \sigma.
		\end{align*}
	\end{footnotesize}
	
	Notice that lemma \ref{lem2-6} also holds for $\gamma=1$. Since $(\boldsymbol z_1^+,\cdots,\boldsymbol z_j^+, \boldsymbol z_1^-,\cdots, \boldsymbol z_k^-)$ is a nondegenerate critical point of Kirchhoff--Routh function $\mathcal K_{k+j}$ in \eqref{1-5}, we claim that there exists a proper location series
	\begin{equation*}
		\left(\boldsymbol z_{1,\varepsilon}^+,\cdots,\boldsymbol z_{j,\varepsilon}^+, \boldsymbol z_{1,\varepsilon}^-,\cdots, \boldsymbol z_{k,\varepsilon}^-\right)= \left(\boldsymbol z_1^+,\cdots,\boldsymbol z_j^+, \boldsymbol z_1^-,\cdots, \boldsymbol z_k^-\right)+o_\varepsilon(1)
	\end{equation*}
	such that $\mathbf \Lambda=\boldsymbol 0$. The convergence for $\psi_\ep$ and $\omega_\ep$ is then a direct result of our construction, and the estimates for vortex sets $\Omega_{m,\varepsilon}^+$ and $\Omega_{n,\varepsilon}^-$ are from Lemma \ref{lem3-3}. Moreover, for $\gamma=1$, the vorticity function $\omega_\ep$ is also $C^1$ smooth by the regularity theory of the elliptic operator. Thus we can deduce that $\ep^2\|\phi_\ep\|_{C^2(B_L(\boldsymbol 0))}=O(\ep)$ by bootstrap, and
	$$\|t_{\varepsilon}\|_{C^2[0,2\pi)}=\frac{\kappa \| \tilde\phi_l^\pm+\mathcal F_l^\pm\|_{C^2(B_L(\boldsymbol 0))}}{\kappa_{l}^\pm w_1'(\tau)+O(\varepsilon)}=O(\varepsilon)$$
	by implicit function theorem. The curvature of $\partial\Omega_{l,\ep}^\pm$ is
	$$\boldsymbol \kappa_{l,\ep}^\pm= \frac{(1+t_\ep)^2+2t_\ep'^2-(1+t_\ep)t_\ep''}{\ep[(1+t_\ep)^2+t_\ep''^2]^{\frac{3}{2}}}=\frac{1+O(\ep)}{\ep}>0.$$
	Hence the proof is complete for Theorem \ref{thm2} on the case $\gamma=1$.
	
	\bigskip
	
	\noindent{\bf Conflict of interest statement:} On behalf of all authors, the corresponding author states that there is no conflict of interest.
	
	\bigskip
	
	\noindent{\bf Data available statement:} Our manuscript has no associated data.

	\bigskip
	 
    \noindent{\bf Acknowledgement:}  T.S. is partially supported by JSPS KAKENHI, Grant No. 23H00086. C.Z. is supported by by NNSF of China (Grant 12301142 and 12371212), CPSF (Grant 2022M722286), NSFSC (Grant 2024NSFSC1341), and International Visiting Program for Excellent Young Scholars of SCU.
    
    \bigskip

\end{document}